\author[]{Lucas De Lara\thanks{E-mail: lucas.de-lara@univ-lorraine.fr}}
\affil[]{Institut Elie Cartan de Lorraine, Université de Lorraine}
\title{A clarification on the links between potential outcomes and do-interventions}
\date{}
\begin{document}

\maketitle

\begin{abstract}
Most of the scientific literature on causal modeling considers the structural framework of Pearl and the potential-outcome framework of Rubin to be formally equivalent, and therefore interchangeably uses do-interventions and the potential-outcome framework to define counterfactual outcomes. In this paper, we agnostically superimpose a structural causal model and a Rubin causal model compatible with the same observations to specify under which mathematical conditions counterfactual outcomes obtained via do-interventions and potential outcomes need to, do not need to, can, or cannot be equal (almost surely or in law). Our comparison builds upon the fact that such causal models do not have to produce the same counterfactuals outcomes, and highlights real-world problems where they generally cannot correspond under classical causal-inference assumptions. Then, we examine common claims and practices from the causality literature in the light of this comparison. In doing so, we aim at clarifying the links between the two causal frameworks, and the interpretation of their respective counterfactuals.
\end{abstract}

\textbf{Keywords:} structural causal models, Rubin causal models, equivalence of models, counterfactuals

\textbf{MSC:} 62A09, 62D20

\section{Introduction}

Understanding causation between phenomena rather than mere association is a fundamental scientific challenge. Over the last decades, two mathematical frameworks using a terminology based on random variables have become the gold standards to address this problem.

On the one hand, the \emph{structural account} of \cite{pearl2009causality} rests on the knowledge of a \emph{structural causal model} (SCM) which specifies all cause-effect equations between observed random variables (often depicted by a graph). The interest of such equations comes from the possibility of carrying out \emph{do-interventions}: forcing a variable to take a given value while keeping the rest of the mechanism untouched. More concretely, let $T$ and $Y$ be observational variables of the model such that we would like to understand the downstream effect of $T$ onto $Y$. Replacing the formula generating $T$ by $T = t$ for a given possible value $t \in \T$ and propagating this change through the other equations defines the altered variable $Y_{T=t}$, representing $Y$ \emph{had $T$ been equal to $t$}.

On the other hand, the \emph{potential-outcome account} of \cite{rubin1974estimating} mathematically formalizes causal inference in clinical trials. Letting $T$ denote a \emph{treatment status} (\emph{e.g.}, taking a drug or not) and $Y$ an \emph{outcome} of interest (\emph{e.g.}, recovering or not), a so-called \emph{Rubin causal model} (RCM) postulates the existence of \emph{potential outcomes} $(Y_t)_{t \in \T}$ representing what the outcome would be \emph{were $T$ equal to $t$} for any $t \in \T$. The \emph{fundamental problem of causal inference} \citep{holland1986statistics} refers to the fact that in practice we cannot observe simultaneously all the potential outcomes, rendering unidentifiable the causal effect of $T$ onto $Y$. Nevertheless, causal inference can still be achieved thanks to a mix of untestable assumptions and statistical tools: adjusting on a set of available covariates $X$ containing all possible \emph{confounders} between the treatment and the potential outcomes permits to identify the law of counterfactual outcomes.

Each of these causal theories enables one to carry out \emph{counterfactual reasoning}, that is answering contrary-to-fact questions such as \say{Had they taken the drug, would have they recovered?}: by applying do-interventions on an SCM, one can compute the outcome $Y_{T=t}$ for every possible treatment status $t \in \T$; using an RCM with appropriate hypotheses, one can infer the law of the every potential outcome $Y_t$. Both approaches involve variables describing counterfactual outcomes, more precisely outcomes \emph{had the variable $T$ taken a certain value}. This naturally raises the question: are these outcome variables equal (almost surely or in law) across frameworks? A plethora of scientific books and survey papers interchangeably use Pearl's do notation and the potential-outcome subscript notation to write outcomes after interventions, suggesting that the corresponding definitions of counterfactuals are identical and differ only from theirs perspectives \citep{imbens2020potential, neal2020introduction, barocas-hardt-narayanan, colnet2024causal, makhlouf2024causality}. To justify this, they often refer to Pearl, who argued that \say{the two frameworks can be used interchangeably and symbiotically}.\footnote{\url{http://causality.cs.ucla.edu/blog/index.php/2012/12/03/judea-pearl-on-potential-outcomes/}} However, influential works on \say{equivalences} between the two causal frameworks have mostly focused on translating graphical assumptions into conditional-independence restrictions instead of actually proving whether counterfactual outcomes were equal across models, or implicitly addressed specific cases. Notably, \cite[Chapter 7]{pearl2009causality} and \citep{richardson2013single} consider \emph{ex nihilo} the exchangeability of the two associated notations in their unifications of both causal frameworks.

In this paper, we essentially aim at clarifying in which sense using interchangeably two distinct causal models is appropriate. To this end, we compare a potential-outcome model and a structural causal model compatible with a same distribution of observations from an agnostic perspective. We introduce three levels of comparisons, corresponding to different degrees of counterfactual reasoning, and neutrally ask under which conditions two models are (un)distinguishable at these levels. This analysis crucially reminds that the models are \emph{not} mathematically bound to correspond, meaning that using them symbiotically generally rests on a \emph{choice}. Moreover, it classifies real-world scenarios where the models can(not) be exchanged, depending on their respective assumptions. Then, we interpret the counterfactual statements and causal effects respectively induced by $(Y_t)_{t \in \T}$ and $(Y_{T=t})_{t \in \T}$ when the models do not coincide, and explain how such results relate to the so-called \emph{formal equivalence} between causal frameworks accepted by the causal-inference community.

In a similar vein,  \cite{ibeling2024comparing} recently provided an in-depth theoretical comparison of the two frameworks by adopting a neutral viewpoint, notably proving that a well-behaved RCM can always be \emph{represented} by an SCM. Altogether, our contributions supplement their work by specifying graphical assumptions that must generally satisfy such an SCM, and by giving a real-world interpretation to these assumptions. In particular, analyzing theoretical representability results through the prism of practically relevant problems enables us to point out overlooked paradoxes in the causal-inference literature. On the basis of our results and discussions, we call the community to rigorously justify their exchanges of models across frameworks, as it could lead to misleading conclusions. In doing so, we hope to further clarify the role of each causal modeling in the past, current, and future causal-inference research.

\subsection{Motivating example}

To motivate this work, we illustrate on a concrete example how potential outcomes and structural counterfactuals can be respectively used to address a specific causal-inference problem.

\subsubsection{Problem}\label{sec:ex_intro}

For simplicity and concision, this example uses informal definitions of causal models; Sections~\ref{sec:prelim} and \ref{sec:setup} introduce formal definitions. All generic notations are specified in Section~\ref{sec:notations}. 

We consider the following fairness-inspired problem. Let the \emph{treatment status} $T : \Omega \to \{0,1\}$ be a binary random variable indicating the gender, $T(\omega) = 0$ standing for women and $T(\omega) = 1$ standing for men; let the \emph{covariate} $X : \Omega \to \R$ be a random variable quantifying the level of work experience, a higher score encoding a more adapted experience; let the \emph{outcome} $Y : \Omega \to \R$ be a random variable evaluating a candidate's application for some position, a better score giving a higher probability of acceptance. Causal analysts are tasked with assessing the fairness of the evaluation process, by answering the question \say{What is the average outcome variation for individuals with a given profile had their gender changed?}. To do so, analyst $\M$ relies on a structural causal model to compute the do-intervention outputs  $(Y_{T=0},Y_{T=1})$ (called structural counterfactuals), and then estimates $\E[Y_{T=1} - Y_{T=0} \mid X=x]$. For their part, analyst $\mathcal{R}$ introduces potential outcomes $(Y_0,Y_1)$ satisfying causal-inference assumptions, and then estimates $\E[Y_1 - Y_0 \mid X=x]$. The signs and intensities of the above estimands quantify (un)fairness. We ask whether analysts $\M$ and $\mathcal{R}$ reach the same conclusion, in other words if the two causal estimands are equal.

Analyst $\M$ postulates that $(T,X,Y)$ is ruled by the following collection of structural assignments:
\begin{align*}
    T &= U_T,\\
    X &= \alpha T + U_X,\\
    Y &= X + \beta T + U_Y,
\end{align*}
where $\alpha$ and $\beta$ are deterministic parameters quantifying the causal influence of $T$ onto respectively $X$ and $Y$, and $U_X$ represents the hidden merit or effort of an individual. Typically, a positive parameter $\alpha$ describes the societal inequalities leading women to have a lower level of work experience than men with equal merit $U_X$. The variables $U_T$, $U_X$, and $U_Y$ are exogenous noises such that $U_Y \independent (T,X)$. For every $t \in \{0,1\}$, the do-intervention denoted by $\operatorname{do}(T=t)$ enables one to compute the downstream effect onto $X$ and $Y$ of fixing $T$ to the value $t$. Concretely, it recursively defines $X_{T=t} = \alpha t + U_X$ and $Y_{T=t} = X_{T=t} + \beta t + U_Y = (\alpha + \beta) t + U_X + U_Y$. Therefore,
\[
    \E[Y_{T=1} - Y_{T=0} \mid X=x] = \alpha + \beta.
\]

Analyst $\mathcal{R}$ postulates two potential outcomes $Y_0$ and $Y_1$, relating to the factual outcome $Y$ via the following consistency property: $Y = (1-T) \cdot Y_0 + T \cdot Y_1$. To compute their estimand from observational data, they suppose that $\P(T=1 \mid X=x) > 0$ and $(Y_0,Y_1) \independent T \mid X$. These conditions are referred to as the fundamental assumptions of causal inference. They imply that $\E[Y_1 - Y_0 \mid X=x] = \E[Y \mid X=x, T=1] - \E[Y \mid X=x, T=0]$ (see Lemma~\ref{lm:sw}). We emphasize that, while they leverage different techniques, the two analysts work with the same $(T,X,Y)$. Therefore, $\E[Y \mid X=x, T=t] = x + \beta t + \E[U_Y]$ due to $U_Y \independent (T,X)$. This leads to
\[
\E[Y_1 - Y_0 \mid X=x] = \beta.
\]

Consequently, $\E[Y_{T=1} - Y_{T=0} \mid X=x] \neq \E[Y_1 - Y_0 \mid X=x]$ if $\alpha \neq 0$. This means that the two analysts obtain distinct, possibly contradictory results.

\subsubsection{Comments}

This example shows that potential outcomes $(Y_t)_{t \in \T}$ equipped with the fundamental assumptions of causal inference do not necessarily define the same causal effects as the post-intervention outcomes $(Y_{T=t})_{t \in \T}$ obtained via do-interventions on $T$. As such, one cannot always interchangeably employ given potential outcomes and structural counterfactuals. This illustration motivates a general comparison of the two causal-inference approaches, which is precisely the goal of this paper. Let us summarize how the example's specifics connect to the broader results presented in this manuscript.

In Section~\ref{sec:setup}, we propose to analyze the similarities and differences of the two approaches through the prism of equivalence relations between causal models. These relations characterize which causal estimands coincide across two models. For instance, in the above example $\E[Y_{T=1} - Y_{T=0} \mid X=x] \neq \E[Y_1 - Y_0 \mid X=x]$ means that the corresponding SCM and RCM were not \emph{single-outcome equivalent} (Definition~\ref{def:representation}).

In Section~\ref{sec:main}, we notably investigate under which conditions an RCM satisfying the fundamental assumptions of causal inference and an SCM are equivalent or not. More precisely, we show that they are generally single-outcome equivalent just in case $T$ does not cause $X$ (Theorem~\ref{thm:identification}). Therefore, in typical fairness problems where $T$ is an immutable variable like sex or race, swapping potential outcomes and structural counterfactuals in a mathematical formula commonly yields distinct estimands. This explains the results of the above illustration. Our analysis also provides a general interpretation of the respective counterfactual variables: a structural counterfactual $Y_{T=t}$ of $Y$ changes by do-intervention $T$ into $t$ and $X$ into $X_{T=t}$ while keeping $U_Y$ equal; a potential outcome $Y_t$ of $Y$ under the assumptions of causal inference changes $T$ into $t$ while keeping $X$ and $U_Y$ equal. This difference in meanings justifies why $\E[Y_{T=1} - Y_{T=0} \mid X=x]$ captured the total effect $\alpha+\beta$ of $T$ on $Y$, whereas $\E[Y_1 - Y_0 \mid X=x]$ captured the direct effect $\beta$.

In Section~\ref{sec:consequences}, we detail how these results relate to the formal equivalence between causal frameworks. Our discussion delineates two paradigms for defining and using potential-outcome variables. The first is what analyst $\M$ does: defining potential outcomes via do-interventions on the latent SCM, so that the properties they satisfy stem from the SCM. The second is what analyst $\mathcal{R}$ does: defining potential outcomes with a different causal interpretation than the one given by do-interventions, in their case by directly placing assumptions upon them---regardless of what holds in the latent SCM. Crucially, only analyst $\M$ relies on the formal equivalence. This is why we caution against unjustified exchanges of the potential-outcome subscript notation and the do notation.

\subsection{Outline of the paper}

The rest of the paper proceeds as follows. Section~\ref{sec:prelim} furnishes the basic knowledge on structural causal models and potential-outcome models. Section~\ref{sec:setup} formalizes the problem we address by introducing notions of equivalence between causal models. Section~\ref{sec:main} clarifies under which conditions a structural causal model and a potential-outcome model compatible with the same observational data need or need not be equivalent. Section~\ref{sec:consequences} discusses the relation between these results and the formal equivalence between causal frameworks. Appendix~\ref{sec:middle_case} addresses a supplementary illustration. Proofs of intermediary results are deferred to Appendix~\ref{sec:proofs}.

\section{Preliminaries}\label{sec:prelim}

This section provides the necessary background on structural causal models and potential outcomes. It is meant to keep the paper self-contained. Section~\ref{sec:notations} introduces generic mathematical notations; Section~\ref{sec:pearl} presents Pearl's causal framework; Section~\ref{sec:rubin} explains Rubin's causal framework.

\subsection{Basic mathematical notations}\label{sec:notations}

Throughout, we consider a probability space $(\Omega, \Sigma, \P)$ with $\Omega$ a sample space, $\Sigma$ a $\sigma$-algebra, and $\P : \Sigma \to [0,1]$ a probability measure. This space does not necessarily have a physical interpretation; it abstractly represents the possible underlying states of the world. Crucially, it serves as the common mathematical basis to define and compare random variables.

A \emph{random variable} $W$ (including \emph{random vectors}) is a measurable function from $\Omega$ to a Borel subset of an Euclidean space equipped with the Borel $\sigma$-algebra. It produces a probability distribution on its output space: we write $\mathcal{L}(W) := \P \circ W^{-1}$ and $\E[W] := \int W(\omega) \mathrm{d}\P(\omega)$ for respectively the \emph{law and expectation under $\P$} of a random variable $W$.\footnote{To fix ideas, one can consider that $\Omega = [0,1]$ and that $\P$ is the uniform distribution on $\Omega$ so that for any Borel probability distribution $P$ there exists $W$ such that $P = \P \circ W^{-1}$ (as reminded in the proof of Proposition~\ref{prop:existence}). As such, this choice allows one to define random variables or vectors with any laws.} We emphasize that the laws of univariate random variables can be completely general in this paper; we do not suppose them to be either Lebesgue-absolutely continuous or discrete. For any Borel set $F$, we use the common probability-textbook notation $\{W \in F\}$ for the set $\{\omega \in \Omega \mid W(\omega) \in F\} \in \Sigma$. Two variables $W_1$ and $W_2$ are \emph{$\P$-almost surely equal}, denoted by $W_1 \aseq W_2$, if $\P(W_1=W_2)=1$; they are \emph{equal in law under $\P$}, denoted by $\mathcal{L}(W_1) = \mathcal{L}(W_2)$, if $\P(W_1 \in F) = \P(W_2 \in F)$ for every Borel set $F$. The notation $W_1 \independent W_2$ means that $W_1$ and $W_2$ are \emph{independent under $\P$}, that is $\P(W_1 \in F_1, W_2 \in F_2) = \P(W_1 \in F_1) \cdot \P(W_2 \in F_2)$ for all Borel sets $F_1,F_2$.

We denote by $\P( \cdot \mid W=w)$ the \emph{regular conditional probability measure with respect to $\{W=w\}$}, which exists and is unique for $\law{W}$-almost every $w$. Then, whenever they are well-defined, we write $\mathcal{L}(W_2 \mid W_1=w_1) := \P(\cdot \mid W_1=w_1) \circ {W_2}^{-1}$ and $\E[W_2 \mid W_1=w_1] := \int W_2(\omega) \mathrm{d}\P(\omega \mid W_1=w_1)$ for respectively the \emph{law and expectation of $W_2$ conditional to $W_1=w_1$}. The expression $W_1 \independent W_2 \mid W_3$ means that $W_1$ and $W_2$ are \emph{independent conditional to $W_3$ under $\P$}, namely that $W_1$ and $W_2$ are independent under $\P(\cdot \mid W_3=w_3)$ for $\law{W_3}$-almost every $w_3$.

Moreover, for any tuple $w := (w_i)_{i \in \I}$ indexed by a finite index set $\I$ and any subset $I \subseteq \I$ we write $w_I := (w_i)_{i \in I}$. Then, for any index set $J$ we abusively write $w_J$ for $w_{J \cap \I}$ and use the notation $w_J = \emptyset$ whenever $J \cap \I = \emptyset$. Similarly, we define the Cartesian product $\mathcal{W}_I := \prod_{i \in I} \mathcal{W}_i$ for any collection of spaces $(\mathcal{W}_i)_{i \in \I}$.

\subsection{Pearl's causal framework}\label{sec:pearl}

Pearl's causal modeling mathematically formalizes associations that standard probability calculus cannot describe through the notions of structural causal models and do-interventions \citep{pearl2009causality}. This section recalls the basics on this topic, borrowing the introduction proposed in \citep{blom2020beyond,bongers2021foundations}.

\subsubsection{Structural causal models}

A \textit{structural causal model} (SCM) represents the causal relationships between the studied variables. It is the cornerstone of Pearl's causal framework.
\begin{definition}[Structural causal model]\label{def:scm}
Let $\I$ and $\J$ be two disjoint finite index sets, and write $\mathcal{V} := \prod_{i \in \I} \mathcal{V}_i \subseteq \R^{|\I|}$, $\mathcal{U} := \prod_{j \in \J} \mathcal{U}_j \subseteq \R^{|\J|}$ for two Borel product spaces. A \emph{structural causal model} $\mathcal{M}$ is a couple $\langle U, g \rangle$ where:

\begin{enumerate}
    \item $U := (U_j)_{j \in \J}$ is a collection of mutually independent random variables called the \emph{random noises}, such that $U_j$ is from $\Omega$ to $\mathcal{U}_j$ for every $j \in \J$;
    \item $g := (g_i)_{i \in \I}$ is a collection of measurable $\R$-valued functions, where for every $i \in \I$ there exist two subsets of indices $\operatorname{Endo}(i) \subseteq \I$ and $\operatorname{Exo}(i) \subseteq \J$, respectively called the \emph{endogenous} and \emph{exogenous parents} of $i$, such that $g_i$ is from $\mathcal{V}_{\operatorname{Endo}(i)} \times \mathcal{U}_{\operatorname{Exo}(i)}$ to $\mathcal{V}_i$.\footnote{This definition tolerates that distinct endogenous variables share the same exogenous parents, that is $\operatorname{Exo}(i) \cap \operatorname{Exo}(i') \neq \emptyset$ for some $i \neq i'$. Therefore, the variables in $(U_{\operatorname{Exo}(i)})_{i \in \I}$ are not necessarily mutually independent.}
\end{enumerate}
A random vector $V : \Omega \to \mathcal{V}$ is a solution of $\mathcal{M}$ if for every $i \in \I$,
\begin{equation}\label{eq:causal_eq}
    V_i \aseq g_i(V_{{\text{Endo}}(i)},U_{\text{Exo}(i)}).
\end{equation}
The equations defined by \eqref{eq:causal_eq} and characterized by $g$ and $U$ are called the \emph{structural equations}.\footnote{In the paper, we will often informally define an SCM by specifying the structural equations rather than $g$.}
\end{definition}

Such a model explains how some \textit{endogenous} variables $V$, representing observed data, are generated from \textit{exogenous} variables $U$, describing background factors. The structural equations quantify the causal dependencies between all these variables and are frequently illustrated by the directed graph $\G_\M$ with nodes $\I \cup \J$, and such that a directed edge points from node $k$ to node $l$ if and only if $k \in \text{Endo}(l) \cup \text{Exo}(l)$ (we say in this case that $k$ is a parent of $l$). For convenience, we make the common assumption that the studied models are \emph{acyclic}, which means that their associated graphs do not contain any cycles.
\begin{assumption}[Acyclicity]\label{hyp:acyclic} $\mathcal{M}$ is such that $\G_\M$ is a directed \emph{acyclic} graph.   
\end{assumption}
Not only acyclicity simplifies the interpretation of causal dependencies, but it entails \emph{unique solvability} of the SCM: according to \cite[Proposition 3.4]{bongers2021foundations}, Equation~$\eqref{eq:causal_eq}$ admits a unique solution up to $\P$-negligible sets. We will abusively refer to such a solution as \emph{the} solution of the SCM. Also, on the basis of this well-posedness of the solution $V$, we will often replace the indices $i \in \I$ and $j \in \J$ by the associated random variables $V_i$ and $U_j$ in the relevant notations. For example we may write $\operatorname{Endo}(V_i)$ instead of $\operatorname{Endo}(i)$.

The purpose of causal structures is to capture the assumption that variables are not independently manipulable. As we detail next, they enable to understand the downstream effect of fixing some variables to certain values onto nonintervened variables.

\subsubsection{The do-intervention}

A \emph{perfect do-intervention} is an operation forcing a set of endogenous variables to take predefined values while keeping all the rest of the causal mechanism equal.

\begin{definition}[Perfect do-intervention]\label{def:dointervention} Let $\mathcal{M} = \langle U, g \rangle$ be an SCM, $I \subseteq \I$ a subset of endogenous variables, and $\tilde{v}_I \in \mathcal{V}_I$ a value. The action $\operatorname{do}(I,\tilde{v}_I)$ defines the modified model $\mathcal{M}_{\operatorname{do}(I,\tilde{v}_I)} = \langle U, \tilde{g} \rangle$ where $\tilde{g}$ is given by: for any $(v,u) \in \V \times \U$ and $i \in \I$,
$$
    \tilde{g_i}(v_{\operatorname{Endo}(i)},u_{\operatorname{Exo}(i)}) := \begin{cases}
                    \tilde{v}_i \text{ if } i \in I,\\
                    g_i(v_{\operatorname{Endo}(i)},u_{\operatorname{Exo}(i)}) \text{ if } i \in \I \setminus I.
                   \end{cases}  
$$
\end{definition}

A do-intervention preserves acyclicity, and therefore unique solvability. As a consequence, if $V$ is the solution of an acyclic $\M$, one can define (up to $\P$-negligible sets) its post-intervention counterpart $V_{\operatorname{do}(I,v_I)}$ solution to $\M_{\operatorname{do}(I,v_I)}$. It describes an alternative world where every $V_i$ for $i \in I$ is set to value $v_i$ while $U$ is preserved. In the rest of the paper, we simply write $\operatorname{do}(V_I = v_I)$ for the operation $\operatorname{do}(I,v_I)$, and use the subscript $V_I=v_I$ to indicate results of this operation. For instance, we write $\M_{V_I=v_I}$ for $\M_{\operatorname{do}(I,v_I)}$ and $V_{V_I=v_I}$ for $V_{\operatorname{do}(I,v_I)}$. Crucially, intervening does not amount to conditioning in general, that is $\mathcal{L}(V \mid V_I=v_I) \neq \mathcal{L}(V_{V_I=v_I})$.

The next proposition provides a general expression of the solution before and after intervention, and will play a key role throughout this paper. For any $I \subseteq \I$ we write $\operatorname{Endo}(I) = \cup_{i \in I} \operatorname{Endo}(i)$, $\operatorname{Exo}(I) = \cup_{i \in I} \operatorname{Exo}(i)$, and $I^c = \I \setminus I$.
\begin{lemma}[do-intervention on variables]\label{lem:docalculus} Let $\mathcal{M} = \langle U, g \rangle$ be an SCM satisfying acyclicity (Assumption~\ref{hyp:acyclic}) with solution $V$, and consider $I \subseteq\I$. There exists a deterministic measurable function $f_{I^c}$ such that
\[
    V_{I^c} \aseq f_{I^c}(V_{\operatorname{Endo}(I^c) \setminus I^c}, U_{\operatorname{Exo}(I^c)}).
\]
Moreover, for any intervention $\operatorname{do}(V_I = v_I)$ the solution $\tilde{V}$ of $\mathcal{M}_{V_I = v_I}$ satisfies
\begin{align*}
    &\tilde{V}_{I^c} \aseq f_{I^c}(v_{\operatorname{Endo}(I^c) \setminus I^c}, U_{\operatorname{Exo}(I^c)}),\\
     &\tilde{V}_I \aseq v_I.   
\end{align*}
\end{lemma}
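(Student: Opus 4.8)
The plan is to exploit the acyclicity assumption to perform a topological sort of the endogenous nodes, and then build the function $f_{I^c}$ by an explicit recursion along this ordering. First I would fix a topological order $\prec$ on $\I$ compatible with $\G_\M$, so that $k \in \operatorname{Endo}(i)$ implies $k \prec i$; such an order exists precisely because $\G_\M$ is a directed acyclic graph (Assumption~\ref{hyp:acyclic}). Because a do-intervention preserves acyclicity (as noted after Definition~\ref{def:dointervention}), the same order works simultaneously for $\M$ and for every intervened model $\M_{V_I = v_I}$; this is what will let me prove both conclusions of the lemma with essentially one argument.

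Next I would construct, by induction on the position of $i$ in the order $\prec$, a family of deterministic measurable functions $(\phi_i)_{i \in \I}$ expressing each solution coordinate $V_i$ as a function of the noises alone: $V_i \aseq \phi_i(U_{A_i})$ for an appropriate finite set of noise indices $A_i \subseteq \J$. The base case handles source nodes (those with $\operatorname{Endo}(i) = \emptyset$), where $V_i \aseq g_i(U_{\operatorname{Exo}(i)})$ directly. For the inductive step, since every $k \in \operatorname{Endo}(i)$ precedes $i$, the representation $V_k \aseq \phi_k(U_{A_k})$ is already available, and substituting these into \eqref{eq:causal_eq} gives $V_i \aseq g_i\big((\phi_k(U_{A_k}))_{k \in \operatorname{Endo}(i)}, U_{\operatorname{Exo}(i)}\big)$, which defines $\phi_i$ with $A_i = \operatorname{Exo}(i) \cup \bigcup_{k \in \operatorname{Endo}(i)} A_k$. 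Composition of measurable maps is measurable, so each $\phi_i$ is measurable; countability (finiteness) of $\I$ means only finitely many almost-sure identities are combined, so the exceptional null sets unite to a null set and the relations hold jointly almost surely. Restricting attention to the coordinates in $I^c$ and packaging the relevant $\phi_i$ together yields a single measurable map $f_{I^c}$ with $V_{I^c} \aseq f_{I^c}(V_{\operatorname{Endo}(I^c) \setminus I^c}, U_{\operatorname{Exo}(I^c)})$ — here I would take care to re-express things so that the arguments are exactly the endogenous parents lying outside $I^c$ together with the exogenous parents of $I^c$, rather than the noises alone, since that is the form the statement demands; one does this by stopping the recursion whenever it reaches a node in $\operatorname{Endo}(I^c) \setminus I^c$ instead of unfolding it further.

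For the post-intervention claim, I would run the identical recursion on $\M_{V_I = v_I} = \langle U, \tilde g \rangle$ using the same topological order. For $i \in I$ the modified equation reads $\tilde V_i \aseq \tilde v_i$, giving $\tilde V_I \aseq v_I$ immediately. For $i \in I^c$ the modified mechanism $\tilde g_i$ coincides with $g_i$, so the recursion produces the very same functional form; the only change is that wherever a coordinate from $I$ feeds into the formula, its value is now the constant $v_i$ rather than $\phi_i(U_{\cdot})$. Tracking this through the construction of $f_{I^c}$ shows that $\tilde V_{I^c} \aseq f_{I^c}(v_{\operatorname{Endo}(I^c) \setminus I^c}, U_{\operatorname{Exo}(I^c)})$, i.e. the same $f_{I^c}$ with the intervened coordinates plugged in as constants. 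Uniqueness of the solution up to $\P$-negligible sets (from acyclicity, via \cite[Proposition 3.4]{bongers2021foundations}) guarantees these almost-sure identities pin down $\tilde V$ unambiguously.

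The main obstacle I anticipate is purely bookkeeping rather than conceptual: getting the index sets exactly right so that $f_{I^c}$ takes as inputs precisely $V_{\operatorname{Endo}(I^c)\setminus I^c}$ and $U_{\operatorname{Exo}(I^c)}$ and no more. The subtlety is that naively unfolding the recursion expresses $V_{I^c}$ in terms of noises only, which is too fine; conversely one must make sure that every endogenous coordinate appearing is indeed a parent of some node in $I^c$ and lies outside $I^c$, and that the recursion genuinely terminates at those nodes. Handling this cleanly probably calls for defining the recursion on the sub-DAG induced by the ancestors of $I^c$ and halting at the frontier $\operatorname{Endo}(I^c)\setminus I^c$, then arguing by induction over that finite induced order. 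The measurability and the almost-sure (rather than everywhere) nature of the identities are routine given finiteness of $\I$ and $\J$ and standard composition facts.
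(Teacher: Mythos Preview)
Your proposal is correct and follows essentially the same approach as the paper: exploit acyclicity to obtain a topological ordering, then recursively substitute the structural equations for nodes in $I^c$ (stopping at endogenous parents in $\operatorname{Endo}(I^c)\setminus I^c$) to build $f_{I^c}$, and repeat the identical substitution in the intervened model to get the post-intervention formula. The paper's proof is a terse two-paragraph sketch of exactly this recursion; your version is more detailed (explicit induction, measurability, null-set bookkeeping) and you correctly flag the frontier-stopping subtlety, which is precisely how the paper's recursion is organized.
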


Importantly, this is the same deterministic function $f_{I^c}$ and the same random noises $U_{\operatorname{Exo}(I^c)}$ that generate $V_{I^c}$ and its post-intervention counterpart $\tilde{V}_{I^c}$, the only change being the assignment $V_I = v_I$. Slightly abusing notations, we will often artificially extend the input variables of $f_{I^c}$ to write $V_{I^c} \aseq f_{I^c}(V_I,U_{\operatorname{Exo}({I^c})})$ and $\tilde{V}_{I^c} \aseq f_{I^c}(v_I,U_{\operatorname{Exo}({I^c})})$ using the fact that $\operatorname{Endo}({I^c}) \setminus {I^c} \subseteq I$. Lemma~\ref{lem:docalculus} can be seen as a \say{vectorization} of the structural equations.

\subsubsection{Counterfactual inference with structural causal models}\label{sec:3steps}

Counterfactual inference aims at predicting outcomes had a certain event occurred given some factual observations. Typically, it addresses what-if questions such as \say{Had they been a woman, would have they gotten the position?}. Perfect interventions combined with conditioning provides a natural probabilistic framework to address counterfactual queries. Let for instance $V := (T,X,Y)$ be the solution to an acyclical SCM $\M := \langle U, g \rangle$. Pearl answers the question \say{had $T$ been equal to $t$, what would have been the law of $Y$ given the factual context $X=x$?} by using the so-called \emph{three-step procedure} \citep{pearl2009causality}:
\begin{enumerate}
    \item \textbf{(Abduction)} compute $\mathcal{L}(U \mid X=x)$, the posterior distribution of $U$-values compatible with the context $\{X=x\}$;
    \item \textbf{(Action)} carry out a do-intervention on $\M$ to obtain the intervened causal mechanism $g_{T=t}$ of $\M_{T=t}$;
    \item \textbf{(Prediction)} pass the posterior distribution $\mathcal{L}(U \mid X=x)$ through $g_{T=t}$ to generate the distribution $\mathcal{L}(Y_{T=t} \mid X = x)$ of counterfactual outcomes.
\end{enumerate}
More generally, an SCM enables one to sample from probability distributions of counterfactual outcomes for any choices of context, variables to alter by do-intervention, and outcomes of interest. Note that, while a fully specified SCM permits to compute any counterfactual distributions via the above algorithm, some distributions can also be identified from only the observational distribution $\law{V}$ and the causal graph $\G_\M$ (that is without completely knowing $g$ or the law of $U$) under specific graphical assumptions (see for example the counterfactual intrepetation of the \emph{backdoor criterion} \citep[Theorem 4.3.1]{pearl2016causal}). Before turning to the potential-outcome framework, let us precise our notations and definitions related to the \say{do}.

\begin{remark}[do-interventions versus the do-operator]
Some readers may be familiar with the \emph{do-operator} and the associated notations \citep[Equation 1.37]{pearl2009causality}, which do not exactly correspond to the do-intervention operation from Definition~\ref{def:dointervention}. This operator serves to distinguish between probability distributions of endogenous variables after distinct do-interventions, similarly to how conditioning distinguishes between probability distributions given distinct realizations of random variables. The respective notations of the do-operator and the conditioning operation are similar (but the do-operator respects different rules). Typically, one writes $\E[V \mid \operatorname{do}(V_I=v_I)]$ for the expectation of the solution of $\M_{V_I=v_I}$, and $\E[Y \mid \operatorname{do}(T=t), X=x]$ for the conditional expectation in $\M_{T=t}$ of the outcome given the covariates. These quantities can be expressed with the subscript-variable notations we introduced as follows: $\E[V \mid \operatorname{do}(V_I=v_I)] = \E[V_{V_I=v_I}]$ and $\E[Y \mid \operatorname{do}(T=t), X=x] = \E[Y_{T=t} \mid X_{T=t}=x]$. We refer to  \citep[Chapter 4]{pearl2016causal} for more details.

The do-operator along with the rules of do-calculus enable one to reframe post-intervention probabilities in terms of standard (conditional) probabilities \citep[Theorem 3.4.1]{pearl2009causality}, making them useful for conducting \emph{interventional} inference. However, as explained by Pearl \citep[Section 4.1]{pearl2016causal}, they do not fully allow \emph{counterfactual} inference. Notably, the do-operator distinguishes between probability distributions entailed by distinct interventions but not between random variables entailed by distinct interventions, and thereby fails to represent cross-world dependencies. For example, a quantity like $\E[Y_{T=t} \mid X=x]$, which involves variables defined across $\M_{T=t}$ and $\M$, cannot always be captured by the do-operator. In particular, $\E[Y_{T=t} \mid X=x] \neq \E[Y \mid \operatorname{do}(T=t), X=x]$ in general \citep[Equation 4.8]{pearl2016causal}.

All in all, the do-operator does not provide a sufficient vocabulary for the problem we address: determining under which conditions one can substitute in causal estimands the counterfactual \emph{variables} $(Y_t)_{t \in \T}$ of Rubin's framework with the post-intervention \emph{variables} $(Y_{T=t})_{t \in \T}$ of Pearl's framework. This is why we do not to use notations as $\E[\ \cdot \mid \operatorname{do}(X=x)]$ in this paper. We see a do-intervention as a transformation of a random variable (instead of a distribution), and indicate post-intervention variables by a subscript as commonly done in the structural-counterfactual literature \citep[Chapter 4]{pearl2016causal}. Moreover, we only employ classical operators from probability theory, like $\E[\ \cdot \mid X=x]$ (as defined in Section~\ref{sec:notations}).
\end{remark}

\subsection{Rubin's causal framework}\label{sec:rubin}

The potential-outcome framework, also known as \emph{Neyman-Rubin causal modeling} \citep{neyman1923applications,rubin1974estimating}, was designed to understand the causal effect of a treatment onto an outcome of interest, for instance when one aims at assessing the contribution of a drug to recovering from some disease in clinical trials. In this section, we introduce this framework in the specific case of a binary treatment.

\subsubsection{Potential outcomes}\label{sec:po}

Let $T : \Omega \to \{0,1\}$ represent a binary \emph{treatment status}, typically such that $T(\omega)=0$ indicates the absence of treatment and $T(\omega)=1$ indicates a treatment. More generally, it can encode any distinction between some groups (\emph{e.g.}, men and women). Assuming \emph{no interference between units},\footnote{Paraphrasing Rubin \cite{rubin2010causal}, a \emph{unit} refers to a study object (like a person). This assumption excludes cases where the treatment of one unit may affect the outcome of another.} this framework postulates two \emph{potential outcomes} $Y_0 : \Omega \to \R$ and $Y_1 : \Omega \to \R$, one for each treatment status. These potential outcomes as well as the treatment may depend on some covariates $X : \Omega \to \R^d$ (such as the patient's weight, height, or historical data in clinical trials). Critically, we cannot observe simultaneously $Y_0(\omega)$ and $Y_1(\omega)$ for a same $\omega$: a problem referred as the \emph{fundamental problem of causal inference} \citep{holland1986statistics}. We only have access to the realized \emph{outcome variable} $Y : \Omega \to \R$ which is supposed to be \emph{consistent} with $(Y_0,Y_1)$, that is satisfying $Y = (1-T) \cdot Y_0 + T \cdot Y_1$. Concretely, if $T(\omega)=1$ for some $\omega \in \Omega$, then $Y(\omega)=Y_1(\omega)$, and $Y_0(\omega)$ becomes unidentifiable by mere observations. We refer to the random vector $(T,X,Y_0,Y_1)$ as the \emph{Rubin causal model} (RCM), which is an augmented version of $(T,X,Y)$ due to consistency.

Understanding the causal relationship between the treatment and the outcome in this framework ideally consists in answering counterfactual questions such as \say{What would have been the value of $Y(\omega)$ had $T(\omega)$ been equal to 1 instead of 0 for a specific $\omega$ (such that $X(\omega)=x$)?}. This cannot be answered since the value of either $Y_0(\omega)$ or $Y_1(\omega)$ will always be missing. Instead, in practice, one estimates and compares under some assumptions features of $\mathcal{L}(Y_1)$ and $\mathcal{L}(Y_0)$, or $\mathcal{L}(Y_1 \mid X=x)$ and $\mathcal{L}(Y_0 \mid X=x)$. People commonly focus on computing the \emph{average treatment effect} $\E[Y_1-Y_0]$ or the \emph{conditional average treatment effect} (CATE) $\E[Y_1-Y_0 \mid X=x]$. The main challenge lies in the fact that \emph{association is not causation} in general. In particular, the quantity $\E[Y \mid T=t]$ does not necessarily coincide with the quantity $\E[Y_t]$ for $t \in \{0,1\}$. Typically, if some medical treatment is more likely to be taken by weaker patients, we may observe a lower rate of recovery among the treated group compared to the nontreated group due to the health condition even though the medicine does increase recovery all other things being kept equal: we would observe $\E[Y \mid T=1] < \E[Y \mid T=0]$ while $\E[Y_1] > \E[Y_0]$ (a phenomenon that can be seen as a consequence of \emph{Simpson's paradox} \citep{blyth1972simpson}). In this case, the health condition is called a \emph{confounder}: a variable associated with both the distribution of the treatment and the outcome. However, causal inference from observational data is still possible, as explained next.
 
\subsubsection{Counterfactual inference with fundamental assumptions}\label{sec:causal_estimation}

Estimating a feature of $\law{(T,X,Y_0,Y_1)}$, like a treatment effect, requires expressing it in terms of features of $\law{(T,X,Y)}$ which generates the empirical observations. Such \emph{identifications} can be achieved under two fundamental assumptions. The first one goes by many names through the literature: \emph{conditional ignorability}, \emph{conditional exchangeability}, \emph{conditional exogeneity}, and \emph{conditional unconfoundedness} (among others). Originally formulated by \cite{rosenbaum1983central}, it states that the potential outcomes are independent of the treatment conditional to the covariates, that is $(Y_0,Y_1) \independent T \mid X$. Said differently, it ensures that all confounders between the treatment and the potential outcomes are included in the covariates. Note that this assumption is untestable, as it would require to observe simultaneously the two potential outcomes. The second key hypothesis is \emph{positivity}, which ensures that all units can be exposed to both treatment statuses, that is $0 < \P(T=1 \mid X=x) < 1$ for $\law{X}$-almost every $x \in \R^d$. It readily follows from positivity that the probability distribution $\mathcal{L}(Y \mid X=x,T=t)$ is well defined for $\mathcal{L}(X)$-almost every $x \in \R^d$ and every $t \in \{0,1\}$, and from conditional ignorability that it coincides with $\mathcal{L}(Y_t \mid X=x)$, meaning that association-based outcomes have a causal interpretation. Several statistical methods coexist to estimate the (conditional) average causal effect, all building upon this implication (see for instance \citep{imbens2004nonparametric, yao2021survey}). We do not detail them for concision and clarity since it is not the topic of this paper. We only point out that, similarly to SCMs, the potential-outcome framework enables one to infer distributions of counterfactual outcomes.

\section{Problem setup}\label{sec:setup}

This section precises the problem we address: analyzing the mathematical similarities and differences between two models respectively derived from the two causal frameworks. Section~\ref{sec:models} introduces a potential-outcome model and a structural causal model compatible with a same dataset, and formalizes the assumptions we may place upon them. Section~\ref{sec:equivalence} defines notions of equivalence between the two models, corresponding to different levels of comparison.

\subsection{Causal models and assumptions}\label{sec:models}

The next definition formalizes the notion of observational data in a causal-inference experiment. It generalizes Section~\ref{sec:prelim} by considering a nonbinary treatment and a multivariate outcome.

\begin{definition}[Observational vector]\label{def:obs}
Let $N, d, p \geq 1$ be integers, and $\T := \{0,1,\ldots,N\}$. An \emph{observational vector} $\O$ is a random vector $(T,X,Y)$ where $T : \Omega  \to \T$ is the \emph{treatment status}, $X : \Omega \to \R^d$ the \emph{covariates}, $Y : \Omega \to \R^p$ the \emph{outcome}, and such that $0 < \P(T=t)$ for all $t \in \T$.    
\end{definition}

In order to compare the two causal frameworks, we consider in what follows a superimposed construction where an observational vector $\O := (T,X,Y)$ is concurrently governed by an RCM and an SCM. Figure~\ref{fig:principle} illustrates this construction. We emphasize that we adopt an agnostic approach where there is no presumed relation between the two causal models except their compatibility with $\O$. This is meant to highlight when equality (possibly in law) between potential and structural counterfactual outcomes is a mathematical necessity or the result of specific assumptions.

\subsubsection{Potential-outcome model}

On the one hand, we assume that $T$ is the treatment status, $X$ some covariates, and $Y$ the outcome of interest in a potential-outcome model in the sense of the definition below. 

\begin{definition}[Rubin causal model]\label{def:rcm}
Let $\O := (T,X,Y)$ be an observational vector. A \emph{Rubin causal model (or potential-outcome model) compatible with $\O$} is a random vector $\mathcal{R} := (T,X,(Y_t)_{t \in \T})$ such that the tuple $(Y_t)_{t \in \T} : \Omega \to (\R^p)^{N+1}$ meets the \emph{consistency rule}:
\[
Y \aseq \sum_{t \in \T} \mathbf{1}_{\{T=t\}} Y_t.
\]
For any $t \in \T$, $Y_t$ is referred to as the \emph{potential outcome} had the treatment been equal to $t$. We denote by $\mathfrak{R}_\O$ the class of Rubin causal models compatible with $\O$.
\end{definition}

A generic RCM $\mathcal{R} := (T,X,(Y_t)_{t \in \T}) \in \mathfrak{R}_{\O}$ is basically an augmented version of $\O$ by $N+1$ random vectors $(Y_t)_{t \in \T}$ satisfying the consistency rule. The variable $Y$ is not explicit in $\mathcal{R}$ as it can be recovered from $T$ and $(Y_t)_{t \in \T}$ via consistency. In this setting, the first fundamental assumption for causal inference can be written as follows.
\begin{assumption}[Positivity]\label{hyp:positivity} $\O$ is such that for all $t \in \T$ and $\law{X}$-almost every $x$, $0 < \P(T=t \mid X=x) < 1$.
\end{assumption}
Remark that, strictly speaking, positivity in an hypothesis on the observational vector rather than the RCM. But people typically suppose it in the context of potential outcomes. We distinguish two formulations for the second fundamental assumption, namely conditional ignorability.
\begin{assumption}[Cross-outcome conditional ignorability]\label{hyp:cross_ignorability} $\mathcal{R}$ is such that $(Y_t)_{t \in \T} \independent T \mid X.$
\end{assumption}
\begin{assumption}[Single-outcome conditional ignorability]\label{hyp:single_ignorability} $\mathcal{R}$ is such that for all $t \in \T$, $Y_t \independent T \mid X$.
\end{assumption}
The stronger version (Assumption~\ref{hyp:cross_ignorability}) is the original one, but most causal-inference methods only require the weaker version (Assumption~\ref{hyp:single_ignorability}). In our main results, we will clearly specify which form of conditional ignorability is required. Crucially, Assumptions~\ref{hyp:positivity} and \ref{hyp:single_ignorability} permit to fully identify the law of $(T,X,Y_t)$ by the law of $(T,X,Y)$ for any $t \in \T$, as recalled below.
\begin{lemma}[Single-outcome identification of potential outcomes]\label{lm:sw}
Let $\O := (T,X,Y)$ be an observational vector satisfying positivity (Assumption~\ref{hyp:positivity}). For any $\mathcal{R} := (T,X,(Y_t)_{t \in \T}) \in \mathfrak{R}_{\O}$, if $\mathcal{R}$ meets single-outcome conditional ignorability (Assumption~\ref{hyp:single_ignorability}) then for every $t \in \T$ and $\law{X}$-almost any $x \in \R^d$:
\[
\mathcal{L}(Y_t \mid X=x) = \mathcal{L}(Y \mid X=x, T=t).
\]
Thereby, for any Borel set $F \subseteq \T \times \R^d \times \R^p$,
\[
\P((T,X,Y_t) \in F) = \int \P(Y \in F(t',x) \mid X=x, T=t) \mathrm{d}\P(X=x,T=t'),
\]
where $F(t',x) := \{y \in \R^p \mid (t',x,y) \in F\}$ for every $(t',x) \in \T \times \R^d$.
\end{lemma}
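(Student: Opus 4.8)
The plan is to first prove the pointwise identity of conditional laws and then lift it to the integral formula by disintegrating $\P$ along the pair $(X,T)$. \emph{Step 1: consistency on the event $\{T=t\}$.} Fix $t \in \T$. The consistency rule $Y \aseq \sum_{t' \in \T} \mathbf{1}_{\{T=t'\}} Y_{t'}$ gives $\mathbf{1}_{\{T=t\}} Y \aseq \mathbf{1}_{\{T=t\}} Y_t$, i.e. $Y = Y_t$ holds $\P$-almost surely on $\{T=t\}$. By positivity (Assumption~\ref{hyp:positivity}), $\P(T=t\mid X=x)>0$ for $\law{X}$-almost every $x$, so $\P(\cdot \mid X=x,T=t)$ is well defined for such $x$; since this measure is carried by $\{T=t\}$, the event $\{Y=Y_t\}$ has full mass under it, whence $\mathcal{L}(Y \mid X=x,T=t) = \mathcal{L}(Y_t \mid X=x,T=t)$ for $\law{X}$-almost every $x$. \emph{Step 2: removing the conditioning on $T$.} Single-outcome conditional ignorability (Assumption~\ref{hyp:single_ignorability}) says that $Y_t$ and $T$ are independent under $\P(\cdot \mid X=x)$ for $\law{X}$-almost every $x$; for such $x$ with $\P(T=t\mid X=x)>0$ (again guaranteed by positivity) this yields $\mathcal{L}(Y_t \mid X=x,T=t) = \mathcal{L}(Y_t \mid X=x)$. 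Chaining with Step 1 proves the first displayed identity.

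\emph{Step 3: the integral formula.} Let $F \subseteq \T \times \R^d \times \R^p$ be Borel and write $F(t',x) := \{y \in \R^p : (t',x,y) \in F\}$. Disintegrating $\P$ along $(X,T)$ (whose law is the integrating measure $\mathrm{d}\P(X=x,T=t')$), I would write
\[
\P((T,X,Y_t) \in F) = \int \P(Y_t \in F(t',x) \mid X=x,T=t')\,\mathrm{d}\P(X=x,T=t').
\]
For $\law{X}$-almost every $x$ and every $t' \in \T$ --- all values of $t'$ being admissible thanks to positivity --- conditional ignorability lets me drop the conditioning on $T=t'$, giving $\P(Y_t \in F(t',x)\mid X=x,T=t') = \P(Y_t \in F(t',x)\mid X=x)$, and then the identity from Steps 1--2 replaces this by $\P(Y \in F(t',x)\mid X=x,T=t)$. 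Substituting into the integral yields the claimed formula.

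\emph{Main obstacle.} The only delicate point is the bookkeeping of the $\law{X}$-null exceptional sets and the simultaneous existence and measurability of the regular conditional probabilities $\P(\cdot \mid X=x)$ and $\P(\cdot \mid X=x,T=t)$, $t \in \T$. Positivity is precisely what guarantees these objects are well defined outside a single common $\law{X}$-null set; once this is in place, the argument reduces to a routine combination of consistency, conditional independence, and disintegration, with no further computation needed.
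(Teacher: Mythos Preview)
Your proof is correct and follows essentially the same approach as the paper: both combine consistency (to identify $Y$ with $Y_t$ on $\{T=t\}$), single-outcome conditional ignorability (to remove or swap the conditioning on $T$), and a disintegration along $(X,T)$ to obtain the integral formula. The only cosmetic difference is that the paper packages the first two steps directly into the single identity $\mathcal{L}(Y_t \mid X=x, T=t') = \mathcal{L}(Y \mid X=x, T=t)$ for every $t' \in \T$ before substituting in the integral, whereas you first prove $\mathcal{L}(Y_t \mid X=x) = \mathcal{L}(Y \mid X=x, T=t)$ and then reuse ignorability inside the integral; the content is identical.
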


\subsubsection{Structural causal model}

On the other hand, we assume that the variables in $\O$ are generated by a latent SCM $\M$, as specified below.

\begin{definition}[Compatible structural causal model]\label{def:mo}
Let $\O$ be an observational vector. An SCM $\M := \langle U, g \rangle$ is \emph{compatible with $\O$} if it admits a unique solution $V$ equal to $\O$. We denote by $\mathfrak{M}_\O$ the class of SCMs satisfying acyclicity (Assumption~\ref{hyp:acyclic}) and compatible with $\O$.     
\end{definition}

The latent model $\M$ defines for every $t \in \T$ the post-intervention outcome $Y_{T=t}$ under $\operatorname{do}(T=t)$ using Definition~\ref{def:dointervention}. We refer to $Y_{T=t}$ as the \emph{structural counterfactual outcome} had the treatment been equal to $t$. Similarly to potential outcomes, structural counterfactuals satisfy the consistency rule, and therefore induce an RCM.
\begin{lemma}[Consistency rule for structural counterfactuals]\label{lem:consistency_do} Let $\O := (T,X,Y)$ be an observational vector. For any $\M \in \mathfrak{M}_\O$, the tuple $(Y_{T=t})_{t \in \T}$ obtained via do-interventions in $\M$ meets the consistency rule,
\[
Y \aseq \sum_{t \in \T} \mathbf{1}_{\{T=t\}} Y_{T=t}.
\]
Therefore, $\mathcal{R}_\M := (T,X,(Y_{T=t})_{t \in \T})$ belongs to $\mathfrak{R}_\O$. We call $\mathcal{R}_\M$ the \emph{entailed RCM} of $\M$.
\end{lemma}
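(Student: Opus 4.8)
The plan is to derive the consistency rule directly from the vectorized structural equations provided by Lemma~\ref{lem:docalculus}, applied with the intervention set reduced to the single index carrying $T$, and then to argue on the finite partition of $\Omega$ determined by the value of $T$.

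Concretely, fix $\M \in \mathfrak{M}_\O$ with (unique) solution $V = \O = (T,X,Y)$. Since $\T \subseteq \R$, the treatment $T$ is a scalar endogenous variable; let $I \subseteq \I$ be the singleton consisting of its index, so that the $p$ coordinates of $Y$ all lie in $I^c$. Applying Lemma~\ref{lem:docalculus} with this $I$ and the extended notation introduced immediately after that lemma yields a deterministic measurable map $f_{I^c}$ with
\[
    V_{I^c} \aseq f_{I^c}(T, \bar U), \qquad \bar U := U_{\operatorname{Exo}(I^c)},
\]
and such that, for every $t \in \T$, the solution $V_{T=t}$ of $\M_{T=t}$ satisfies $(V_{T=t})_{I^c} \aseq f_{I^c}(t, \bar U)$. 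Extracting the block of coordinates indexed by $Y$ and writing $f_Y$ for the corresponding block of $f_{I^c}$, this reads $Y \aseq f_Y(T,\bar U)$ and $Y_{T=t} \aseq f_Y(t,\bar U)$ for each $t \in \T$; crucially, it is the \emph{same} function $f_Y$ and the \emph{same} noises $\bar U$ appearing in both identities.

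Next I would combine these identities on each event $\{T=t\}$. For fixed $t$, outside a $\P$-null set one has simultaneously $Y = f_Y(T,\bar U)$ and $Y_{T=t} = f_Y(t,\bar U)$; on $\{T=t\}$ the first right-hand side equals the second, whence $\mathbf{1}_{\{T=t\}} Y \aseq \mathbf{1}_{\{T=t\}} Y_{T=t}$. As $\T$ is finite, the finitely many exceptional null sets may be discarded at once, and since $(\{T=t\})_{t\in\T}$ is a partition of $\Omega$ one gets $Y = \sum_{t\in\T}\mathbf{1}_{\{T=t\}} Y \aseq \sum_{t\in\T}\mathbf{1}_{\{T=t\}} Y_{T=t}$, the claimed consistency rule. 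The ``therefore'' part is then immediate from Definition~\ref{def:rcm}: $(Y_{T=t})_{t\in\T}$ is an $(\R^p)^{N+1}$-valued random vector (each $Y_{T=t}$ being well defined up to $\P$-negligible sets by unique solvability of the acyclic $\M_{T=t}$), it satisfies the consistency rule, and $0 < \P(T=t)$ for all $t\in\T$ holds because $\O$ is an observational vector; hence $\mathcal{R}_\M = (T,X,(Y_{T=t})_{t\in\T}) \in \mathfrak{R}_\O$.

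The argument is essentially bookkeeping, and I do not anticipate any genuine analytic difficulty. The only points deserving care are that $f_{I^c}$ and the post-intervention variables $Y_{T=t}$ are defined only up to $\P$-negligible sets, so one must fix representatives and use that a finite union of null sets is null; and that Lemma~\ref{lem:docalculus} really delivers the \emph{same} deterministic map before and after intervention (this is exactly the remark following that lemma), without which the step ``$f_Y(T,\bar U) = f_Y(t,\bar U)$ on $\{T=t\}$'' would be unjustified.
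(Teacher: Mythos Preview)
Your proposal is correct and follows essentially the same approach as the paper: apply Lemma~\ref{lem:docalculus} with $I$ the singleton index of $T$, extract the $Y$-block of $f_{I^c}$ to obtain $Y \aseq f_Y(T,\bar U)$ and $Y_{T=t} \aseq f_Y(t,\bar U)$ with the same map and noises, then conclude on each $\{T=t\}$. The paper's proof is slightly terser (it does not spell out the finite-union-of-null-sets argument or the ``therefore'' part), but the logic is identical.
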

The notion of entailed RCM bridges SCMs to RCMs, thereby enables one to compare these two classes of models which differ fundamentally in their constructions of counterfactual outcomes (see Remark~\ref{rem:versus}). Contrasting $\mathcal{R}_\M := (T,X,(Y_{T=t})_{t \in \T})$ against $\mathcal{R} := (T,X,(Y_t)_{t \in \T})$ almost surely or in law is precisely the goal of this paper, as formalized next in Section~\ref{sec:equivalence}.

\begin{remark}[Primitives versus derivatives]\label{rem:versus}
As noted by Pearl \cite{pearl2010brief}, the potential outcomes $(Y_t)_{t \in \T}$ are \say{undefined \emph{primitives}} of the RCM, not related to any formal of measurable quantities, while the post-intervention outcomes $(Y_{T=t})_{t \in \T}$ are \say{\emph{derivatives}} of the SCM by application of do-interventions. Said differently, the firsts are inputs \emph{defining} the causal model, whereas the seconds are post-intervention outputs \emph{defined by} the causal model. Figure~\ref{fig:principle} illustrates this aspect. As such, consistency is a \emph{theorem} for structural counterfactuals and a constitutive \emph{assumption} for potential outcomes. This holds more generally for any conceivable properties on counterfactual outcomes: in $\mathcal{R}$ they usually are assumptions; in $\mathcal{R}_{\M}$ they are derived from $\M$.
\end{remark}

\begin{figure}[t]
\centering

\begin{tikzpicture}[
  node distance=1.5cm and 2cm, 
  >={stealth},
  Box/.style={draw, minimum size=1cm, align=center}
]

\node[Box] (U) at (0,0) {$U$};
\node[Box] (A) [above left=of U] {$(Y_{\color{blue}t})_{t \in \T}$};
\node[Box] (B) [above=of U] {$T,X,Y$};
\node[Box] (C) [above right=of U] {$(Y_{\color{red}T=t})_{t \in \T}$};

\draw[->] (U) -- (B) node[midway, left] {$g$};
\draw[->] (U) -| (C);

\draw[->, red, dashed] ($(U)!.5!(B)+(0.2,-0.2)$) -- ($(U)!.5!(C)+(+1.6,-0.2)$)
    node[midway, above] {$\operatorname{do}(T=t)$\ $ \forall t \in \T$};

\draw[decorate, decoration={brace, amplitude=10pt, raise = 20pt}, blue] 
    ($(A)+(-0.2,0)$) -- ($(B)+(-0.3,0)$)
    node[midway, yshift=30pt, above] {$\mathcal{R}$};

\draw[decorate, decoration={brace, amplitude=10pt, raise = 20pt}, red] 
    ($(B)+(0.3,0)$) -- ($(C)+(0.2,0)$)
    node[midway,yshift=30pt, above] {$\mathcal{R}_{\M}$};

\draw[decorate, decoration={brace, amplitude=10pt, raise =20pt}, red] 
    (U)+(0,-0.3) -- ($(B)+(0,-0.6)$)
    node[midway,xshift=-30pt, left] {$\M$};

\node at (B) [above=0.7cm] {$\O$};

\node at (A) [below=0.7cm, text width=2.5cm] {properties are \emph{assumptions}};

\node at (C) [right=1cm, text width=2.5cm] {properties are \emph{derived} from $\M$};

\draw[->, blue, dashed] ($(A)+(-1.5,0)$) -- (A); 

\end{tikzpicture}

\caption{Superimposed construction of an RCM $\mathcal{R}$ and an SCM $\M$ compatible with a same observational vector $\O$.}
\label{fig:principle}
\end{figure}

\subsection{Notions of equivalence between causal models}\label{sec:equivalence}

We aim at studying the mathematical similarities and differences between potential outcomes and structural counterfactuals from a theoretically neutral perspective. We consider three levels of comparison that will guide our analysis throughout the paper.

\begin{definition}[Equivalences between causal models]\label{def:representation}
Let $\O$ be an observational vector, and $\mathcal{R}_1 := (T,X,(Y^{(1)}_t)_{t \in \T}),\mathcal{R}_2 := (T,X,(Y^{(2)}_t)_{t \in \T})$ be two models in $\mathfrak{R}_\O$. We say that $\mathcal{R}_1$ and $\mathcal{R}_2$ are:
\begin{itemize}
    \item[(i)] \emph{almost-surely equivalent} if $Y^{(1)}_t \aseq Y^{(2)}_t$ for any $t \in \T$;
    \item[(ii)] \emph{cross-outcome equivalent} if $\law{(T,X,(Y^{(1)}_t)_{t \in \T})} = \law{(T,X,(Y^{(2)}_t)_{t \in \T})}$;
    \item[(iii)] \emph{single-outcome equivalent} if $\law{(T,X,Y^{(1)}_t)} = \law{(T,X,Y^{(2)}_t)}$ for every $t \in \T$.
\end{itemize}
By overloading terminology, for $\M \in \mathfrak{M}_\O$ and $\mathcal{R} \in \mathfrak{R}_\O$, we analogously say that $\M$ is equivalent to $\mathcal{R}$ in one of the above senses if $\mathcal{R}_\M$ is equivalent to $\mathcal{R}$ in this sense.
\end{definition}
These types of equivalence focus on (entailed) RCMs in contrast to the notions proposed in \citep{bongers2021foundations, beckers2021equivalent} which are specifically tailored to SCMs. Remark that (i) $\implies$ (ii) $\implies$ (iii). Mathematically, each relation characterizes a class of RCM-based estimands that are invariant under the swap of two equivalent models $\mathcal{R}_1$ and $\mathcal{R}_2$, which amounts to exchanging their potential outcomes. If the models are almost-surely equivalent, then \emph{all} estimands are invariant under the swap. If the models are cross-outcome equivalent, then any estimands \emph{based on their cross-outcome distributions} are invariant under the swap: for instance, $\E \left[\lVert Y^{(1)}_1 - Y^{(1)}_0 \rVert^2 \mid X=x \right] = \E \left[ \lVert Y^{(2)}_1 - Y^{(2)}_0 \rVert^2 \mid X=x \right]$. If the models are single-outcome equivalent, then any estimands \emph{based on their single-outcome distributions} are invariant under the swap: for example, $\E \left[ Y^{(1)}_1 - Y^{(1)}_0 \mid X=x \right] = \E \left[Y^{(2)}_1 - Y^{(2)}_0 \mid X=x \right]$. When equivalence does not hold, some causal effects of the corresponding relation may vary under the swap. Less formally, the motivation for the three ladders in Definition~\ref{def:representation} comes from different levels at which people reason counterfactually. Let us detail them with an RCM and an SCM.

The almost-sure level focuses on counterfactual questions at the scale of $\omega \in \Omega$. It asks (for example) \say{What would have been the value of $Y(\omega)$ had $T(\omega)$ been equal to 1 instead of 0?}. The answer is deterministic in both the potential-outcome approach and the structural approach, given by $Y_1(\omega)$ in the RCM and by $Y_{T=1}(\omega)$ in the SCM. Should the models be almost-surely equivalent, the answer would be identical for almost-every $\omega \in \Omega$. We emphasize that this level is methodologically inessential: in practice, one does not have access to the random variables of the models themselves but to some realizations of their laws. Nevertheless, comparing causal models on this almost-sure baseline is theoretically important to completely understand their mathematical differences.

We now turn to the cross-outcome level. Because counterfactual questions at the scale of $\omega$ cannot be answered, people rather address surrogate queries like \say{What would have been the law of $Y$ had $T$ been equal to 1 instead of 0 given that $X=x$?}. In both causal approaches, the answer is generally not deterministic (that is uniquely determined). Possible answers along with their probabilities are respectively described by $\law{Y_1 \mid X=x}$ (which can be estimated using the techniques mentioned in Section~\ref{sec:causal_estimation}) in the potential-outcome framework and by $\law{Y_{T=1} \mid X=x}$ (which can be inferred via the three-step procedure from Section~\ref{sec:3steps}) in the SCM. More generally, most answers to counterfactual questions in the structural framework and in the potential-outcome framework are respectively characterized by the joint probability distributions $\mathcal{L}((T,X,(Y_{T=t})_{t \in \T}))$ and $\mathcal{L}((T,X,(Y_t)_{t \in \T}))$. Therefore, should the models be cross-outcome equivalent, they would yield the same conclusions when reasoning counterfactually at this level.

The single-outcome level resembles the cross-outcome level in the sense that it also focuses on distributions of outcomes; it differs by being mathematically weaker. It is motivated by the fact that researchers and practitioners predominantly ask counterfactual questions involving single counterfactual outcomes instead of joint counterfactual outcomes, as in the above paragraph. As such, knowing only the marginal laws $\mathcal{L}((T,X,Y_t))$ and $\mathcal{L}((T,X,Y_{T=t}))$ for every $t \in \T$ suffices to compute most of the practically relevant counterfactual estimands. This is why it is arguably the most critical level in practice. For illustration, consider average treatment effects like $\E[Y_1 - Y_0]$ or $\E[Y_1 - Y_0 \mid X=x]$ (as in Section~\ref{sec:ex_intro}), distributional treatment effects like $D(\law{Y_1},\law{Y_0})$ or $D(\law{Y_1 \mid X=x},\law{Y_0 \mid X=x})$ where $D$ is a discrepancy between probability measures \citep{muandet2021counterfactual,park2021conditional}, or the \emph{standard} counterfactual-fairness condition on $Y$ : $\law{Y_{T=t'} \mid X=x, T=t} = \law{Y_{T=t} \mid X=x, T=t}$ for every $t \in \T$ \citep{kusner2017counterfactual}. They all concern the single-outcome level.\footnote{There also exists a notion of \emph{strong} counterfactual fairness \citep[Equation 4]{kusner2017counterfactual}. It demands $\P(Y_{T=t} = Y_{T=t'} \mid X=x, T=t) = 1$ for every $t,t' \in \T$, and is thereby a proper \emph{cross-outcome} condition.} Therefore, should the models be single-outcome equivalent, they would yield the same results in these state-of-the-art methodologies.

\begin{remark}[At which level do the fundamental assumptions matter?]\label{rem:level}
According to Lemma~\ref{lm:sw}, positivity (Assumption~\ref{hyp:positivity}) and single-outcome conditional ignorability (Assumption~\ref{hyp:single_ignorability}) uniquely determines the single-outcome level of RCMs in $\mathfrak{R}_{\O}$ by $\law{\O}$. As such, these assumptions allow the estimation of most practically-relevant causal effects. However, we point out that even cross-outcome conditional ignorability (Assumption~\ref{hyp:cross_ignorability}) does not uniquely determines the cross-outcome level. In Remark~\ref{rem:identification} from Section~\ref{sec:comparison}, we study two RCMs $\mathcal{R}_1 := (T,X,(Y^{(1)}_t)_{t \in \T})$ and $\mathcal{R}_2 := (T,X,(Y^{(2)}_t)_{t \in \T})$ satisfying positivity and cross-outcome conditional ignorability such that $\law{(T,X,(Y^{(1)}_t)_{t \in \T})} \neq \law{(T,X,(Y^{(2)}_t)_{t \in \T})}$. All in all, the scope of the fundamental assumptions of causal inference is limited to the single-outcome level of counterfactual reasoning. Identifying a cross-outcome effect such as $\E[Y_1 \mid Y=0, T=0]$ requires stronger assumptions.
    
\end{remark}

To summarize, when two causal models compatible with a same observational vector meet an equivalence condition from Definition~\ref{def:representation}, they can be used interchangeably at a certain level of counterfactual reasoning. This mathematically signifies that a certain class of causal effects is invariant under exchanges of their counterfactual outcomes (\textit{e.g.}, the CATE for single-outcome equivalent models). In the next section, we study whether $\M \in \mathfrak{M}_\O$ and $\mathcal{R} \in \mathfrak{R}_\O$ need to, do not need to, can, or cannot be equivalent under different degrees of assumptions.

\section{Main results}\label{sec:main}

In this section, we compare the generic RCM and SCM introduced in Section~\ref{sec:models} according to the three levels presented in Section~\ref{sec:equivalence}. More precisely, Section~\ref{sec:negative} firstly reminds that equivalence does not necessarily hold whatever the level, then Section~\ref{sec:comparison} identifies and studies cases where equivalence does (not) hold at the single-outcome level. Finally, Section~\ref{sec:interpretation} proposes conceptual and practical interpretations of these mathematical results.

\subsection{Equivalence does not generally hold}\label{sec:negative}

We start by a crucial reminder justifying why it is relevant to compare the models at the aforementioned levels: an RCM and an SCM compatible with a same observational vector are not necessarily equivalent.

\subsubsection{General case}

The proposition below formalizes this claim in the most general scenario.

\begin{proposition}[Equivalence is not necessary]\label{prop:nas}
Let $\O$ be an observational vector. For any $\M \in \mathfrak{M}_\O$, there exists $\mathcal{R} \in \mathfrak{R}_\O$ such that $\M$ and $\mathcal{R}$ are equivalent in none of the senses from Definition~\ref{def:representation}.
\end{proposition}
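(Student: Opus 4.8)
The plan is to construct, from the given SCM $\M \in \mathfrak{M}_\O$, an RCM $\mathcal{R} \in \mathfrak{R}_\O$ whose potential outcomes differ from the structural counterfactuals $(Y_{T=t})_{t\in\T}$ not merely almost surely but even at the single-outcome level, i.e.\ $\law{(T,X,Y_t)} \neq \law{(T,X,Y_{T=t})}$ for some $t$. Since single-outcome equivalence is the weakest of the three notions (by the chain (i)$\implies$(ii)$\implies$(iii) noted after Definition~\ref{def:representation}), violating it rules out all three. The key freedom is that, by consistency, a potential outcome $Y_t$ is only pinned down on the event $\{T=t\}$ — there it must equal $Y$ — while on $\{T \neq t\}$ it is completely unconstrained. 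So I would define $Y_t := \mathbf{1}_{\{T=t\}} Y + \mathbf{1}_{\{T \neq t\}} Z_t$ for suitably chosen auxiliary variables $Z_t$, which automatically gives $\mathcal{R} := (T,X,(Y_t)_{t\in\T}) \in \mathfrak{R}_\O$ regardless of the choice of $Z_t$.

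The core of the argument is then to choose the $Z_t$ so that the law of $(T,X,Y_t)$ is forced to differ from that of $(T,X,Y_{T=t})$. The cleanest way: pick a treatment value $t^\ast \in \T$ with $\P(T \neq t^\ast) > 0$ (which exists since $\T$ has at least two elements, as $N \geq 1$ in Definition~\ref{def:obs}), and on the event $\{T \neq t^\ast\}$ set $Y_{t^\ast}$ to a constant far outside the (essential) range that $Y_{T=t^\ast}$ takes there — for instance, using the footnote's remark that any law is realizable on $(\Omega,\Sigma,\P)$, one can arrange $Y_{t^\ast} = c$ on $\{T \neq t^\ast\}$ for a constant $c$ chosen so large that $\P(Y_{t^\ast} = c,\, T = t', X \in \R^d) > 0$ for some $t' \neq t^\ast$ while $\P(Y_{T=t^\ast} = c,\, T=t') $ can be made to differ (in the generic case $Y$ is not a.s.\ constant on that event; in the degenerate case where it is, shift $c$ accordingly). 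More robustly, I would avoid case analysis by comparing a one-dimensional functional: choose a bounded continuous $\varphi$ and arrange $Z_{t^\ast}$ so that $\E[\varphi(Y_{t^\ast}) \mathbf{1}_{\{T=t'\}}] \neq \E[\varphi(Y_{T=t^\ast})\mathbf{1}_{\{T=t'\}}]$ — always possible because we control $\law{Z_{t^\ast} \mid T=t'}$ entirely and can push it away from the fixed target law $\law{Y_{T=t^\ast}\mid T=t'}$. Since $T$ and $X$ are untouched, this discrepancy in a marginal of $(T,X,Y_{t^\ast})$ witnesses failure of single-outcome equivalence, hence of all three equivalences.

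The main obstacle is a subtle existence/measurability point: I must ensure the auxiliary variables $Z_t$ can be constructed on the same probability space $(\Omega,\Sigma,\P)$ jointly with the already-fixed $\O$ and with prescribed conditional laws. This is exactly the kind of fact the paper flags in the footnote about $\Omega = [0,1]$ with $\P$ uniform and in Proposition~\ref{prop:existence}; I would invoke it (or a standard "enlargement of the probability space / transfer" argument) to realize $(Z_t)_{t\in\T}$ with the desired conditional distributions independently of everything else. A secondary, purely cosmetic obstacle is handling the (measure-zero of interest but logically possible) edge case where the SCM's structural counterfactual $Y_{T=t^\ast}$ happens to already be constant on $\{T\neq t^\ast\}$ with exactly the value I picked; this is dispatched by simply choosing a different constant, so it does not affect the structure of the proof. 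Everything else — verifying consistency for $\mathcal{R}$, and verifying $\mathcal{R} \in \mathfrak{R}_\O$ — is immediate from the definitions.
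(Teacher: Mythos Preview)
Your approach is essentially the paper's: exploit that consistency leaves $Y_t$ unconstrained on $\{T \neq t\}$, and choose it there to break equality in law with $Y_{T=t}$. The paper's execution is much cleaner than yours, however: it simply sets
\[
Y_t := \mathbf{1}_{\{T=t\}} Y_{T=t} + \mathbf{1}_{\{T \neq t\}}(Y_{T=t} + y)
\]
for a fixed nonzero $y \in \R^p$, i.e.\ your $Z_t$ is taken to be $Y_{T=t} + y$. This choice uses only random variables already defined on $(\Omega,\Sigma,\P)$, so your worries about probability-space enlargement, auxiliary randomness, and measurability disappear entirely; and since a deterministic nonzero shift of a distribution on $\R^p$ is never law-preserving, the conditional law $\law{Y_t \mid T=t'}$ differs from $\law{Y_{T=t} \mid T=t'}$ for every $t' \neq t$ with no edge-case analysis needed.
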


This result rests on a simple remark: the consistency rule does not suffice to characterize potential outcomes $\P$-almost surely.\footnote{This formally means that the set $\mathfrak{R}_\O$ is not reduced to one class of almost-surely equivalent RCMs.} More precisely, while necessarily $Y_t(\omega) = Y(\omega) = Y_{T=t}(\omega)$ on $\{T=t\}$ for $t \in \T$ (up to $\P$-negligible subsets) due to Lemma~\ref{lem:consistency_do}, there is no constraint on $Y_t(\omega)$ over $\Omega \setminus \{T=t\}$; it could take any value over it without violating the consistency rule. In contrast, $Y_{T=t}$ is defined (almost) everywhere through the altered SCM $\mathcal{M}_{T=t}$. The proof of Proposition~\ref{prop:nas} exploits this specification issue of potential outcomes: for any $t \in \T$, $Y_t$ can be any function on $\{T \neq t\}$, thereby can be chosen distinct to $Y_{T=t}$.
\begin{proof}\textbf{of Proposition~\ref{prop:nas}}\mbox{ }
Let $(Y_{T=t})_{t \in \T}$ be the structural counterfactuals of $\M$, and define potential outcomes $(Y_t)_{t \in \T}$ as follows. For any $t \in \T$,
\[
    Y_t := \mathbf{1}_{\{T=t\}} Y_{T=t} + \mathbf{1}_{\{T \neq t\}} (Y_{T=t} + y),
\]
where $y \in \R^p$ is not the null vector. The tuple $(Y_t)_{t \in \T}$ satisfies the consistency rule according to Lemma~\ref{lem:consistency_do}. Moreover, for any $t \in \T$, $Y_t$ is not almost-surely equal nor equal in law to $Y_{T=t}$ due to $\P(T \neq t)>0$ and $Y_{T=t}+y \neq Y_{T=t}$.
\end{proof}
It may seem counter-intuitive to design oneself the potential outcomes (as done in the proof and in upcoming examples), since people frequently consider them as externally imposed. We emphasize that Proposition~\ref{prop:nas} is theoretically neutral, and only serves to remind that equality between counterfactual outcomes across causal models does not \emph{necessarily} hold.

\subsubsection{Causal-inference setting}

Proposition~\ref{prop:nas} focuses on the most general setting where the potential outcomes meet only consistency. Said differently, it simply shows that consistency alone is not enough to guarantee equality (almost-surely or in law) between counterfactual outcomes. Nevertheless, people suppose most often that the potential outcomes also satisfy conditional ignorability in order to apply causal-inference techniques. This raises the question whether such an additional hypothesis could render the causal models equivalent. The next theorem ensures that equivalence does not always hold even under the fundamental assumptions of causal inference.
\begin{proposition}[Equivalence is not necessary under the fundamental assumptions of causal inference]\label{prop:nas_fa} Let $\O$ be an observational vector satisfying positivity (Assumption~\ref{hyp:positivity}).
There exist $\M \in \mathfrak{M}_\O$ and $\mathcal{R} \in \mathfrak{R}_\O$ satisfying conditional ignorability (Assumption~\ref{hyp:cross_ignorability} or \ref{hyp:single_ignorability}) such that $\M$ and $\mathcal{R}$ are equivalent in none of the senses from Definition~\ref{def:representation}.
\end{proposition}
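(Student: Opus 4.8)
The plan is to prove the statement by exhibiting one observational vector $\O$ satisfying positivity together with an SCM $\M\in\mathfrak{M}_\O$ and an RCM $\mathcal{R}\in\mathfrak{R}_\O$ that satisfies conditional ignorability but is not equivalent to $\M$ in any sense. This is exactly the phenomenon displayed by the motivating example of Section~\ref{sec:ex_intro}, so I would simply instantiate it with concrete noise laws making positivity hold. Since (i)$\Rightarrow$(ii)$\Rightarrow$(iii) in Definition~\ref{def:representation}, it suffices to show that $\M$ and $\mathcal{R}$ fail to be \emph{single-outcome} equivalent; that automatically rules out cross-outcome and almost-sure equivalence as well.

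Concretely, I would fix $N=d=p=1$, so $\T=\{0,1\}$, take mutually independent noises $U_T\sim\mathrm{Bernoulli}(1/2)$, $U_X\sim\mathcal{N}(0,1)$, $U_Y\sim\mathcal{N}(0,1)$, and set $T:=U_T$, $X:=T+U_X$, $Y:=X+T+U_Y$. This triple is the unique solution of the acyclic SCM $\M:=\langle(U_T,U_X,U_Y),g\rangle$ with $g_X(t,u)=t+u$ and $g_Y(x,t,u)=x+t+u$ (graph $T\to X$, $T\to Y$, $X\to Y$), hence $\M\in\mathfrak{M}_\O$ for $\O:=(T,X,Y)$. Positivity (Assumption~\ref{hyp:positivity}) holds, since $\P(T=t)=1/2>0$ and for every $x$ the quantity $\P(T=1\mid X=x)$ is a ratio of two strictly positive Gaussian densities, hence lies in $(0,1)$. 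Note also that $U_Y$, being independent of $(U_T,U_X)$, satisfies $U_Y\independent(T,X)$.

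Next I would equip $\O$ with the potential outcomes $Y_0:=Y-T$ and $Y_1:=Y-T+1$ and put $\mathcal{R}:=(T,X,(Y_0,Y_1))$. Consistency is immediate, as $\mathbf{1}_{\{T=0\}}Y_0+\mathbf{1}_{\{T=1\}}Y_1=\mathbf{1}_{\{T=0\}}Y+\mathbf{1}_{\{T=1\}}Y=Y$, so $\mathcal{R}\in\mathfrak{R}_\O$. For conditional ignorability I would use the rewriting $Y_0=X+U_Y$, $Y_1=X+1+U_Y$: conditionally on $\{X=x\}$ the pair $(Y_0,Y_1)$ is a deterministic function of $U_Y$ alone, and since $U_Y\independent(T,X)$ forces $U_Y\independent T\mid X$, this yields $(Y_0,Y_1)\independent T\mid X$. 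Thus $\mathcal{R}$ satisfies cross-outcome conditional ignorability (Assumption~\ref{hyp:cross_ignorability}), a fortiori the single-outcome version (Assumption~\ref{hyp:single_ignorability}).

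It then remains to defeat single-outcome equivalence. Carrying out $\operatorname{do}(T=t)$ in $\M$ gives $X_{T=t}=t+U_X$ and $Y_{T=t}=X_{T=t}+t+U_Y=2t+U_X+U_Y$, so $Y_{T=0}=U_X+U_Y$. Comparing laws via expectations, $\E[Y_0]=\E[X]+\E[U_Y]=1/2$ while $\E[Y_{T=0}]=0$, so $\mathcal{L}(Y_0)\neq\mathcal{L}(Y_{T=0})$ and hence $\mathcal{L}(T,X,Y_0)\neq\mathcal{L}(T,X,Y_{T=0})$; equivalently, the CATEs $\E[Y_1-Y_0\mid X=x]=1$ and $\E[Y_{T=1}-Y_{T=0}\mid X=x]=2$ disagree, consistently with Section~\ref{sec:ex_intro}. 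Therefore $\M$ and $\mathcal{R}$ are not single-outcome equivalent and, by the implication chain, equivalent in none of the senses of Definition~\ref{def:representation}. There is no deep obstacle here, the claim being witnessed by an explicit construction; the two points deserving care are (a) checking that the chosen noise laws genuinely give positivity—whence a continuous, nondegenerate $U_X$ rather than a point mass—and (b) verifying that the constructed $(Y_0,Y_1)$ satisfy conditional ignorability and not merely consistency, which reduces to observing that once $X$ is conditioned upon these potential outcomes are driven solely by the exogenous noise $U_Y$, together with the built-in independence $U_Y\independent(T,X)$.
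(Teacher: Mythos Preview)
Your proof is correct and in fact cleaner than the paper's for the bare statement of the proposition. You use the same SCM as the paper (with $T=U_T$, $X=T+U_X$, $Y=T+X+U_Y$ and the same noise laws), but your potential outcomes $Y_t=f_Y(t,X,U_Y)=t+X+U_Y$ are the natural ``plug-in'' ones, for which cross-outcome conditional ignorability is immediate from $U_Y\independent(T,X)$. The paper instead defines $(Y_0,Y_1)$ via a sign-flipping construction, $Y_0=(1-T)(X+U_Y)+T(X-U_Y)$ and $Y_1=(1-T)(1+X-U_Y)+T(1+X+U_Y)$, and then exploits the symmetry $\law{U_Y}=\law{-U_Y}$ to verify Assumption~\ref{hyp:cross_ignorability}; it also gives a second variant that satisfies only single-outcome ignorability (Assumption~\ref{hyp:single_ignorability}) and not the cross-outcome version. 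Your shortcut renders both detours unnecessary for proving Proposition~\ref{prop:nas_fa} itself, since an $\mathcal{R}$ satisfying Assumption~\ref{hyp:cross_ignorability} automatically satisfies Assumption~\ref{hyp:single_ignorability}. What the paper's more intricate construction buys is reusability: because its $(Y_0,Y_1)$ are not deterministic shifts of one another (unlike yours, where $Y_1-Y_0\equiv 1$), the same example later witnesses in Remark~\ref{rem:identification} and in the proof of Corollary~\ref{cor:equivalence} that two RCMs both satisfying cross-outcome ignorability can still differ at the cross-outcome level. Your construction would not serve that secondary purpose, but it is entirely sufficient here.
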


This results comes from the fact that conditional ignorability fully determines the single-outcome distributions of an RCM (Lemma~\ref{lm:sw}) and does not hold by design in an entailed RCM.

\begin{proof}\textbf{of Proposition~\ref{prop:nas_fa}}\mbox{ }
We address the specific case where $\T := \{0,1\}$ and both $X$ and $Y$ are $\R$-valued; one can readily generalize the proof. Consider the following SCM:
\begin{align*}
    T &\aseq U_T,\\
    X &\aseq T + U_X\\
    Y &\aseq T + X + U_Y,
\end{align*}
where $U_T$ follows a Bernoulli distribution with parameter $1/2$, while $U_X$ and $U_Y$ both follow the centered Gaussian distribution with unit variance, such that $U_T, U_X$ and $U_Y$ are mutually independent.

By do-interventions, $X_{T=t} \aseq t + U_X$ and $Y_{T=t} \aseq t + X_{T=t} + U_Y$ for any $t \in \T$. Therefore, by substituting the expression of $X_{T=t}$ in the expression of $Y_{T=t}$ we obtain $Y_{T=0} \aseq U_X + U_Y$ and $Y_{T=1} \aseq 2+ U_X + U_Y$. Then, we define potential outcomes as:
\begin{align}
Y_0 &:= (1-T) \cdot (X + U_Y) + T \cdot (X - U_Y),\label{eq:y0_def}\\
Y_1 &:= (1-T) \cdot (1 + X - U_Y) + T \cdot (1 + X + U_Y).\label{eq:y1_def}
\end{align}
A key property of this construction is that $\law{U_Y} = \law{-U_Y}$ by symmetry of $U_Y$'s distribution, while $\P(U_Y = -U_Y) = 0$. After simplification,
\begin{align}
Y_0 &= (1-T) \cdot (T + U_X + U_Y) + T \cdot (T + U_X - U_Y) = T + (1-2T) \cdot U_Y + U_X,\label{eq:y0_simple}\\
Y_1 &= (1-T) \cdot (1 + T + U_X - U_Y) + T \cdot (1 + T + U_X + U_Y) = (1 + T) - (1-2T) \cdot U_Y + U_X.\label{eq:y1_simple}
\end{align}

Let us check the required assumptions using Equations~\eqref{eq:y0_def} and \eqref{eq:y1_def}. Firstly, the pair $(Y_0,Y_1)$ clearly satisfies $Y \aseq (1-T) \cdot Y_0 + T \cdot Y_1$. Secondly, for $\law{X}$-almost every $x \in \R$ and any $t \in \T$, $\P(T=t \mid X=x) = \frac{\varphi(x-t)}{\varphi(x) + \varphi(x-1)} > 0$ where $\varphi$ is the density function of the centered Gaussian distribution with unit variance. Therefore, positivity holds. Thirdly, $\law{(Y_0,Y_1) \mid X=x, T=0} = \law{(x+U_Y,1+x-U_Y) \mid X=x, T=0}$ by definition. Next, Lemma~\ref{lm:noise} ensures that $U_Y \independent (T,X)$, thereby $\law{(Y_0,Y_1) \mid X=x, T=0} = \law{(x+U_Y,1+x-U_Y)} = \law{(x-U_Y,1+x+U_Y)}$ since $\law{U_Y} = \law{-U_Y}$. Then, notice that
\[
\law{(Y_0,Y_1) \mid X=x, T=1)} = \law{(x-U_Y, 1+x+U_Y}) = \law{(Y_0,Y_1) \mid X=x, T=0}
\]
using once again $U_Y \independent (T,X)$. Therefore, cross-outcome conditional ignorability holds. To conclude, verify from Equations~\eqref{eq:y0_simple} and \eqref{eq:y1_simple} that $\E[Y_0] = 1/2$ while $\E[Y_{T=0}] = 0$, and $\E[Y_1] = 3/2$ while $\E[Y_{T=1}] =2$. Therefore, $\law{Y_t} \neq \law{Y_{T=t}}$ for any $t \in \T$.

This proves the result with an $\mathcal{R}$ that satisfies cross-outcome conditional ignorability (Assumption~\ref{hyp:cross_ignorability}). We now show that the same holds when only single-outcome conditional ignorability (Assumption~\ref{hyp:single_ignorability}) is true. To do so, we slightly modify the definition of $Y_0$ in the above example while keeping $Y_1$ identical. Set $Y_0 := X + U_Y = T + U_X + U_Y$, so that $\E[Y_0] = 1/2$ again. Hence, we still have that $\law{Y_t} \neq \law{Y_{T=t}}$ for any $t \in \T$. Moreover, note that the consistency rule still holds. It remains to verify conditional ignorability.

Let us check that single-outcome conditional ignorability \emph{does} hold. For $t \in \T$,
\begin{align*}
    \law{Y_0 \mid X=x, T=t} &= \law{x + U_Y}\\
    \law{Y_1 \mid X=x, T=t} &= (1-t) \cdot \law{1+x-U_Y} + t \cdot \law{1 + x + U_Y} = \law{1+x+U_Y},
\end{align*}
due to $U_Y \independent(T,X)$ and $\law{U_Y} = \law{-U_Y}$. From the above expressions and the fact that $U_Y \independent T$, it follows that $Y_t \independent T \mid X$ for $t \in \T$. We turn to proving that cross-outcome conditional ignorability \emph{does not} hold. By a similar calculus to before,
\begin{align*}
    \law{(Y_0,Y_1) \mid X=x, T=0} &= \law{(x + U_Y, 1 + x - U_Y)}\\
    \law{(Y_0,Y_1) \mid X=x, T=1} &= \law{(x + U_Y, 1 + x + U_Y)}.
\end{align*}
Therefore, $\law{(Y_0,Y_1) \mid X=x, T=0} \neq \law{(Y_0,Y_1) \mid X=x, T=1}$. This concludes the proof.
\end{proof}

To summarize Section~\ref{sec:negative}: whether it be at the variable or distributional level, with or without the fundamental assumptions of causal inference, potential outcomes and structural counterfactuals are not necessarily equal. Therefore, whatever the level of counterfactual reasoning, using $(T,X,(Y_t)_{t \in \T})$ and $(T,X,(Y_{T=t})_{t \in \T})$ interchangeably (as commonly done in the scientific literature) must be either the manifestation of an arbitrary choice---a selection among all the pairs of equivalent causal models---or the mathematical consequence of further assumptions. Notably, Proposition~\ref{prop:nas_fa} demands strong hypotheses on the potential outcomes but let the latent SCM almost unrestrained. This motivates a sharper analysis of the conditions (in particular on the SCM) that could imply equivalence or on the contrary make it impossible. This is precisely what the following subsection proposes. 

\subsection{Comparison of causal models in the causal-inference setting}\label{sec:comparison}

In what follows, we aim at providing from a purely mathematical perspective a better understanding of what renders (or not) an SCM and an RCM equivalent.

\subsubsection{Structural assumptions}

We start by introducing additional assumptions on the underlying SCM that will simplify the comparison with an RCM. Let us denote by $U_T := U_{\operatorname{Exo}(T)}$, $U_X  := U_{\operatorname{Exo}(X)}$ and $U_Y := U_{\operatorname{Exo}(Y)}$ the exogenous parents of respectively $T$, $X$, and $Y$ in the latent SCM $\M := \langle g, U \rangle$. These notations have already been used in the SCMs from the introductory example of Section~\ref{sec:ex_intro} and from the proof of Proposition~\ref{prop:nas_fa} (in the specific case where $T$, $X$, and $Y$ were all univariate). Then, we consider the following graphical conditions on $\M$:
\begin{assumption}[Outcome]\label{hyp:outcome} $\M$ is such that $Y_{\operatorname{Endo}(T)} = Y_{\operatorname{Endo}(X)} = \emptyset$.
\end{assumption}
\begin{assumption}[Independent random noises]\label{hyp:noises} $\M$ is such that $\operatorname{Exo}(Y) \cap \left(\operatorname{Exo}(T) \cup \operatorname{Exo}(X)\right) = \emptyset$, which means that $U_Y \independent (U_T,U_X)$.
\end{assumption}
Assumption~\ref{hyp:outcome} structurally defines the variable $Y$ as the \emph{outcome}; it changes in response to $X$ and $T$ but not the contrary. Through Lemma~\ref{lem:docalculus}, it permits to write
\[
\begin{cases}
    T &\aseq f_T(X,U_T),\\
	X &\aseq f_X(T,U_X),\\
    Y &\aseq f_Y(T,X,U_Y),
\end{cases}
\hspace{1cm}\text{and}\hspace{1cm}
\begin{cases}
    T_{T=t} &\aseq t,\\
	X_{T=t} &\aseq f_X(t,U_X),\\
    Y_{T=t} &\aseq f_Y(t,X_{T=t},U_Y),
\end{cases}  
\]
for any $t \in \T$, where $f_T, f_X$ and $f_Y$ are measurable functions derived from $g$. The artificial cycle in these formulas (\emph{i.e.}, $X$ and $T$ are both functions of each other) merely serves to consider all configurations of causal links between $T$ and $X$ (see Figure~\ref{fig:decisive}); strictly, $\mathcal{M}$ satisfies Assumption~\ref{hyp:acyclic}.

Assumption~\ref{hyp:noises} is a random-noise condition that notably holds in the widely-used class of SCMs referred to as Markovian, where all the noises in $(U_{\operatorname{Exo}(i)})_{i \in \I}$ are mutually independent. As clarified by the following lemma, this item guarantees that all potential confounders between $T$ and $Y$---except $T$ itself---are included in $X$.
\begin{lemma}[No hidden confounder]\label{lm:noise}
Let $\O := (T,X,Y)$ be an observational vector. For any $\M \in \mathfrak{M}_\O$ satisfying Assumptions~\ref{hyp:outcome} and \ref{hyp:noises}, if $U_T,U_X$ and $U_Y$ denote the exogenous parents of respectively $T,X$ and $Y$ in $\M$, then $U_Y \independent (T,X)$.   
\end{lemma}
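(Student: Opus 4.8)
The plan is to unpack the definitions and use the structural form of $\M$ granted by Assumptions~\ref{hyp:outcome} and \ref{hyp:noises}. By Lemma~\ref{lem:docalculus} together with Assumption~\ref{hyp:outcome}, the solution $\O = (T,X,Y)$ of $\M$ can be written with $T$ and $X$ being functions of each other and of $(U_T,U_X)$ only, while $Y \aseq f_Y(T,X,U_Y)$. More precisely, vectorizing the structural equations for the subsystem $\{T,X\}$, there is a measurable function $h$ such that $(T,X) \aseq h(U_T,U_X)$ — here I use that $\operatorname{Endo}(T) \cup \operatorname{Endo}(X) \subseteq \{T,X\}$ because $Y$ is not an endogenous parent of $T$ or $X$ by Assumption~\ref{hyp:outcome}, so $\operatorname{Exo}(\{T,X\})$ is the only source of exogenous randomness feeding $(T,X)$, and this is contained in $\operatorname{Exo}(T) \cup \operatorname{Exo}(X)$.

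The key step is then an independence argument. The noises $U = (U_j)_{j \in \J}$ are mutually independent by the definition of an SCM (Definition~\ref{def:scm}, item 1). Assumption~\ref{hyp:noises} says $\operatorname{Exo}(Y) \cap (\operatorname{Exo}(T) \cup \operatorname{Exo}(X)) = \emptyset$, i.e. the index set generating $U_Y := U_{\operatorname{Exo}(Y)}$ is disjoint from the one generating $(U_T,U_X) := (U_{\operatorname{Exo}(T)}, U_{\operatorname{Exo}(X)})$. Since disjoint sub-collections of a mutually independent family are independent, $U_Y \independent (U_T,U_X)$. Now $(T,X)$ is a deterministic measurable function of $(U_T,U_X)$, so $U_Y \independent (T,X)$ follows because independence is preserved under measurable transformations of one of the two arguments.

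The main obstacle — though it is more of a bookkeeping subtlety than a genuine difficulty — is justifying that $(T,X)$ depends \emph{only} on $(U_T,U_X)$ and on no other noises. This requires invoking the acyclicity-based unique solvability (Assumption~\ref{hyp:acyclic} and the vectorized Lemma~\ref{lem:docalculus}) applied to the intervention $\operatorname{do}(Y = y)$ or, more directly, observing that the induced sub-SCM on $\{T,X\}$ obtained by deleting $Y$ is itself acyclic and its exogenous parents are exactly $\operatorname{Exo}(T) \cup \operatorname{Exo}(X)$; its unique solution coincides $\P$-almost surely with $(T,X)$ because $Y$ never feeds back into $T$ or $X$. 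One must also be a little careful that $(U_{\operatorname{Exo}(i)})_{i}$ need not be mutually independent in general (as the footnote to Definition~\ref{def:scm} warns), but this is irrelevant here: what matters is the disjointness of index sets, which directly gives the required independence from the mutual independence of the underlying family $U$. Once these points are in place, the chain $U_Y \independent (U_T,U_X) \implies U_Y \independent h(U_T,U_X) = (T,X)$ closes the proof.
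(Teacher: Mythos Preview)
Your proof is correct and follows essentially the same route as the paper: apply Lemma~\ref{lem:docalculus} (with $I^c=\{T,X\}$, using Assumption~\ref{hyp:outcome} to ensure $Y$ is not an endogenous parent) to write $(T,X)\aseq f_{T,X}(U_T,U_X)$, then use the disjointness of exogenous index sets from Assumption~\ref{hyp:noises} together with mutual independence of the family $U$ to obtain $U_Y\independent(U_T,U_X)$, and conclude by closure of independence under measurable maps. Your additional remarks on the sub-SCM viewpoint and the footnote about $(U_{\operatorname{Exo}(i)})_{i\in\I}$ are accurate elaborations but not extra ingredients.
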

Altogether, Assumptions~\ref{hyp:outcome} and \ref{hyp:noises} mean that the randomness of the outcome $Y \aseq f_Y(T,X,U_Y)$ can be causally interpreted by three sources: the direct effect of the treatment status $T$, the direct effect of the covariates $X$, and any other possible effects $U_Y$ independent to $T$ and $X$.

\subsubsection{Single-outcome comparison}\label{sec:theorem}

Supposing that the underlying SCM satisfies the above assumptions, we derive theoretical results on the single-outcome level of equivalence between causal models. To contrast at this level an RCM with the SCM, we need a common basis to compare laws across them. The theorem below characterizes the law of $(T,X,Y_t)$ for any $t \in \T$ \emph{through the latent SCM $\mathcal{M}$} under the fundamental assumptions of causal inference, thereby enabling us to compare $\law{(T,X,Y_t)}$ with $\law{(T,X,Y_{T=t})}$.
\begin{theorem}[Single-outcome structural expression of potential outcomes]\label{thm:identification}
Let $\O := (T,X,Y)$ be an observational vector satisfying positivity (Assumption~\ref{hyp:positivity}), and $\mathcal{M} \in \mathfrak{M}_\O$ meet Assumptions~\ref{hyp:outcome} plus \ref{hyp:noises}. According to Lemma~\ref{lm:noise}, $U_Y \independent (T,X)$ where $U_Y$ denotes the exogenous parents of $Y$ in $\M$. According to Lemma~\ref{lem:docalculus}, there exists a measurable function $f_Y$ such that $Y \aseq f_Y(T,X,U_Y)$ and $Y_{T=t} \aseq f_Y(t, X_{T=t},U_Y)$ for any $t \in \T$. Then, for any $\mathcal{R} := (T,X,(Y_t)_{t \in \T}) \in \mathfrak{R}_\O$ satisfying conditional ignorability (Assumption~\ref{hyp:single_ignorability}),
\[
    \law{(T,X,Y_t)} = \law{(T,X,f_Y(t,X,U_Y))},
\]
for any $t \in \T$.
\end{theorem}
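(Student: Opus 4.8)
The plan is to compute $\law{(T,X,Y_t)}$ using Lemma~\ref{lm:sw} and then to identify the resulting distribution with that of $(T,X,f_Y(t,X,U_Y))$ by exploiting the independence $U_Y \independent (T,X)$. Since $\O$ satisfies positivity (Assumption~\ref{hyp:positivity}) and $\mathcal{R}$ satisfies single-outcome conditional ignorability (Assumption~\ref{hyp:single_ignorability}), Lemma~\ref{lm:sw} gives, for any Borel set $F \subseteq \T \times \R^d \times \R^p$,
\[
\P((T,X,Y_t) \in F) = \int \P\bigl(Y \in F(t',x) \mid X=x, T=t\bigr)\, \mathrm{d}\P(X=x,T=t'),
\]
where $F(t',x) := \{y \in \R^p \mid (t',x,y) \in F\}$. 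So everything reduces to re-expressing the inner conditional law $\law{Y \mid X=x,T=t}$ through the structural quantities $f_Y$ and $U_Y$.

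First I would establish the key identity $\law{Y \mid X=x,T=t} = \law{f_Y(t,x,U_Y)}$ for $\law{(T,X)}$-almost every $(t,x)$. This follows from $Y \aseq f_Y(T,X,U_Y)$ (Lemma~\ref{lem:docalculus} combined with Assumption~\ref{hyp:outcome}) together with $U_Y \independent (T,X)$ (which is exactly Lemma~\ref{lm:noise}, applicable since $\mathcal{M}$ meets Assumptions~\ref{hyp:outcome} and \ref{hyp:noises}). Concretely: conditioning on $\{X=x,T=t\}$ freezes the first two arguments of $f_Y$ at the constants $(t,x)$, while the conditional law of $U_Y$ given $\{X=x,T=t\}$ is just its marginal law by independence; pushing that marginal through the measurable map $u \mapsto f_Y(t,x,u)$ yields $\law{f_Y(t,x,U_Y)}$. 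Plugging this into the displayed integral, and recognizing $\P(f_Y(t,x,U_Y) \in F(t',x))$ as $\P((t',x,f_Y(t,x,U_Y)) \in F)$, I would obtain
\[
\P((T,X,Y_t) \in F) = \int \P\bigl((t',x,f_Y(t,x,U_Y)) \in F\bigr)\, \mathrm{d}\P(X=x,T=t').
\]

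The final step is to show that this last expression equals $\P((T,X,f_Y(t,X,U_Y)) \in F)$. Here the point is that $f_Y(t,X,U_Y)$ has its \emph{first} argument fixed at the value $t$ but its \emph{second} argument is the random variable $X$, while the conditioning variable in Lemma~\ref{lm:sw} appears as $(T,X)=(t',x)$ ranging over the joint law; so I must be careful to keep the roles of $x$ (the covariate value being integrated) and $t$ (the fixed intervention target) straight, and likewise $t'$ (the treatment value recorded in the triple) versus $t$. Using $U_Y \independent (T,X)$ once more, for any Borel $F$ we have $\P((T,X,f_Y(t,X,U_Y)) \in F) = \int \P(f_Y(t,x,U_Y) \in F(t',x))\, \mathrm{d}\P(X=x,T=t')$ by disintegrating over $(T,X)$ and using that $U_Y$ is independent of $(T,X)$ — which is precisely the right-hand side above. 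Since $F$ was arbitrary, the two laws agree, giving $\law{(T,X,Y_t)} = \law{(T,X,f_Y(t,X,U_Y))}$.

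I do not anticipate a serious obstacle: the argument is essentially a bookkeeping exercise in disintegration plus one clean application each of Lemma~\ref{lm:sw} and Lemma~\ref{lm:noise}. The one place to be careful — and the closest thing to a ``hard part'' — is the measure-theoretic handling of the independence step: one should note that $U_Y \independent (T,X)$ implies $\law\bigl((T,X,U_Y)\bigr) = \law{(T,X)} \otimes \law{U_Y}$, so that for the measurable map $\Phi(t',x,u) := (t',x,f_Y(t,x,u))$ (with $t$ a fixed parameter) one has $\law{(T,X,f_Y(t,X,U_Y))} = \law{(T,X)} \otimes \law{U_Y}$ pushed forward by $\Phi$, and Fubini–Tonelli lets one write this as the iterated integral displayed above. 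Matching that iterated integral against the one coming from Lemma~\ref{lm:sw} then closes the proof.
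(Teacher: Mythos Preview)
Your proposal is correct and follows essentially the same route as the paper's own proof: apply Lemma~\ref{lm:sw} to express $\P((T,X,Y_t)\in F)$ as an integral of $\P(Y\in F(t',x)\mid X=x,T=t)$, use $Y\aseq f_Y(T,X,U_Y)$ together with $U_Y\independent(T,X)$ from Lemma~\ref{lm:noise} to simplify the inner conditional law, and then recognize the resulting iterated integral as $\P((T,X,f_Y(t,X,U_Y))\in F)$. The only cosmetic difference is that the paper rewrites $\mathcal{L}(f_Y(t,x,U_Y)\mid X=x,T=t)$ as $\mathcal{L}(f_Y(t,x,U_Y)\mid X=x,T=t')$ (swapping the conditioning treatment value) rather than dropping the conditioning entirely as you do; both are immediate from the same independence and lead to the same final identification.
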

The proof combines the RCM single-outcome identification from Lemma~\ref{lm:sw} with the SCM expression of $Y$ from Lemma~\ref{lem:docalculus} to connect the two causal models at the single-outcome level.
\begin{proof}\textbf{of Theorem \ref{thm:identification}}\mbox{ }Let $t \in \T$. Let $F \subseteq \T \times \R^d \times \R^p$ be a Borel set, and define the sets $F(t',x) := \{y \in \R^p \mid (t',x,y) \in F\}$ for every $(t',x) \in \T \times \R^d$. According to Lemma~\ref{lm:sw},
\begin{equation}\label{eq:sw_identification}
    \P((T,X,Y_t) \in F) = \int \P(Y \in F(t',x) \mid X=x,T=t) \mathrm{d}\P(X=x,T=t').
\end{equation}
Additionally, according to Lemma~\ref{lem:docalculus} the pre-intervention and post-intervention outcomes can be written as $Y \aseq f_Y(T,X,U_Y)$ and $Y_{T=t} \aseq f_Y(t,X_{T=t},U_Y)$. Therefore, for any $t' \in \T$ and $\law{X}$-almost any $x \in \R^d$,
\[
    \mathcal{L}(Y \mid X=x, T=t) = \mathcal{L}(f_Y(T,X,U_Y) \mid X=x, T=t) = \mathcal{L}(f_Y(t,x,U_Y) \mid X=x, T=t).
\]
Also, it follows from Assumptions~\ref{hyp:outcome} and \ref{hyp:noises} through Lemma~\ref{lm:noise} that $U_Y \independent (T,X)$, hence that $\mathcal{L}(f_Y(t,x,U_Y) \mid X=x, T=t) = \mathcal{L}(f_Y(t,x,U_Y) \mid X=x, T=t')$. By continuing the above computation we therefore have for any $t' \in \T$:
\begin{equation}\label{eq:scm_identification}
    \mathcal{L}(Y \mid X=x, T=t) = \mathcal{L}(f_Y(t,x,U_Y) \mid X=x, T=t').
\end{equation}
Combining \eqref{eq:sw_identification} with \eqref{eq:scm_identification} leads to
\[
    \P((T,X,Y_t) \in F) = \int \P( f_Y(t,x,U_Y) \in F(t',x) \mid X=x,T=t')\mathrm{d}\P(X=x,T=t').
\]
Finally,
\begin{align*}
\P((T,X,Y_t) \in F) &= \int \P( (T,X,f_Y(t,X,U_Y)) \in F \mid X=x,T=t')\mathrm{d}\P(X=x,T=t')\\ &= \P( (T,X,f_Y(t,X,U_Y)) \in F).
\end{align*}
This means that $\mathcal{L}((T,X,Y_t)) = \mathcal{L}((T,X,f_Y(t,X,U_Y)))$, which concludes the proof.
\end{proof}

\begin{remark}[Single-outcome expression vs.~cross-outcome expression]\label{rem:identification}
Theorem~\ref{thm:identification} expresses for any $t \in \T$ the marginal distribution $\law{(T,X,Y_t)}$ with SCM-based quantities under single-outcome conditional ignorability (Assumption~\ref{hyp:single_ignorability}), but does not provide a similar formula for the whole joint distribution $\law{(T,X,(Y_t)_{t \in \T})}$. There is an explanation: $\law{(T,X,(Y_t)_{t \in \T})} \neq \law{(T,X,(f_Y(t,X,U_Y))_{t \in \T})}$ in general---even under cross-outcome conditional ignorability (Assumption~\ref{hyp:cross_ignorability}). To prove this point, we consider the same setting as the proof of Proposition~\ref{prop:nas_fa}, in which Assumptions~\ref{hyp:positivity}, \ref{hyp:cross_ignorability}, \ref{hyp:outcome} and \ref{hyp:noises} hold.

Using the SCM notations, $f_Y(0,X,U_Y) = X + U_Y$ and $f_Y(1,X,U_Y) = 1+ X + U_Y$. It then follows from $\law{-U_Y} = \law{U_Y}$ and Equations~\eqref{eq:y0_def} and \eqref{eq:y1_def} that $\law{Y_0}=\law{f_Y(0,X,U_Y)}$ and $\law{Y_1}=\law{f_Y(1,X,U_Y)}$, as anticipated by Theorem~\ref{thm:identification}. However, $\law{(Y_0,Y_1)} \neq \law{(f_Y(0,X,U_Y),f_Y(1,X,U_Y))}$. It suffices to remark that $\law{f_Y(1,X,U_Y)-f_Y(0,X,U_Y)} = \delta_1$ (the Dirac measure at $1$) whereas $\law{Y_1-Y_0} = \law{1 + 2(2T-1) \cdot U_Y} \neq \delta_1$ using Equations~\eqref{eq:y0_simple} and \eqref{eq:y1_simple}.

This remark is consistent with the point made in Remark~\ref{rem:level} that we shall prove now. Continuing the same example, we define the potential outcomes $(Y^{(1)}_0,Y^{(1)}_1) := (Y_0,Y_1)$ and $(Y^{(2)}_0,Y^{(2)}_1) := (f_Y(0,X,U_Y),f_Y(1,X,U_Y))$ along with their associated RCMs $\mathcal{R}^{(1)}$ and $\mathcal{R}^{(2)}$ in $\mathfrak{R}_{\O}$. Note that the pair $(Y^{(2)}_0,Y^{(2)}_1)$ clearly satisfies the consistency rule. Moreover,
\begin{align*}
\law{(Y^{(2)}_0,Y^{(2)}_1) \mid X=x, T=0} &= \law{(x+U_Y,1+x+U_Y) \mid X=x,T=0}\\ &= \law{(x+U_Y,1+x+U_Y) \mid X=x,T=1}\\ &= \law{(Y^{(2)}_0,Y^{(2)}_1) \mid X=x, T=1},
\end{align*}
due to $U_Y \independent (T,X)$. Therefore, $\mathcal{R}^{(1)}$ and $\mathcal{R}^{(2)}$ both meet positivity and cross-outcome conditional ignorability, but are not cross-outcome equivalent.
\end{remark}

As such, Theorem~\ref{thm:identification} enables one to compare potential outcomes and structural counterfactuals at the single-outcome level. It critically implies under the fundamental assumptions of causal inference that for any $t \in \T$:
\begin{equation}\label{eq:compare}
    \mathcal{L}(Y_t) = \mathcal{L}(f_Y(t,X,U_Y))\ \text{and}\ \mathcal{L}(Y_{T=t}) = \mathcal{L}(f_Y(t,X_{T=t},U_Y)).
\end{equation}
This result sheds light on Proposition~\ref{prop:nas_fa}: $Y_t$ and $Y_{T=t}$ are not necessarily equal in law since $\mathcal{L}(X) \neq \mathcal{L}(X_{T=t})$ in general. Moreover, it furnishes sufficient conditions on the latent SCM for single-outcome equivalence to hold. Observe that the probability distributions from Equation~\eqref{eq:compare} are equal if: (1) $X$ is not altered by do-interventions on $T$, or (2) $Y$ is not impacted by $X$. The following assumption captures scenario (1), which is the most relevant in causal inference.
\begin{assumption}[No posttreatment covariate]\label{hyp:control} $\M$ is such that, $T_{\operatorname{Endo}(X)} = \emptyset$. 
\end{assumption}
Assumption~\ref{hyp:control} together with Assumption~\ref{hyp:outcome} mean that $X$ contains no descendant of $T$ in $\M$. It implies that $X_{T=t} \aseq X$ for any $t \in \T$, which guarantees single-outcome equivalence under the assumptions of Theorem~\ref{thm:identification}.
\begin{corollary}[Single-outcome equivalence under no posttreatment covariate]\label{cor:equivalence}
Let $\O$ be an observational vector satisfying positivity (Assumption~\ref{hyp:positivity}), $\mathcal{M} \in \mathfrak{M}_\O$ meet Assumptions~\ref{hyp:outcome} plus \ref{hyp:noises}, and $\mathcal{R} \in \mathfrak{R}_\O$ satisfy single-outcome conditional ignorability (Assumption~\ref{hyp:single_ignorability}). If Assumption~\ref{hyp:control} holds, then $\M$ and $\mathcal{R}$ are single-outcome equivalent but not necessarily cross-outcome equivalent, even under cross-outcome conditional ignorability (Assumption~\ref{hyp:cross_ignorability}).
\end{corollary}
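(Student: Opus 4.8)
The statement has a positive half (single-outcome equivalence of $\M$ and $\mathcal{R}$) and a negative half (cross-outcome equivalence may fail, even under Assumption~\ref{hyp:cross_ignorability}). The plan is to obtain the positive half by plugging the identity $X_{T=t} \aseq X$ into Theorem~\ref{thm:identification}, and the negative half by adapting the counterexample underlying Proposition~\ref{prop:nas_fa} so that it additionally satisfies Assumption~\ref{hyp:control}.

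\emph{Positive half.} First I would use Lemma~\ref{lem:docalculus} together with Assumption~\ref{hyp:outcome} to write $Y \aseq f_Y(T, X, U_Y)$ and $Y_{T=t} \aseq f_Y(t, X_{T=t}, U_Y)$ for a measurable $f_Y$, which places us in the framework of Theorem~\ref{thm:identification}. Then I would invoke the observation recorded just before the corollary: under Assumptions~\ref{hyp:outcome} and \ref{hyp:control} the covariate $X$ has no descendant of $T$ in $\M$, hence $X_{T=t} \aseq X$ for every $t \in \T$. Substituting this gives the almost-sure identity $Y_{T=t} \aseq f_Y(t, X, U_Y)$, so $\law{(T, X, Y_{T=t})} = \law{(T, X, f_Y(t, X, U_Y))}$. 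On the other hand, since positivity (Assumption~\ref{hyp:positivity}), Assumptions~\ref{hyp:outcome} and \ref{hyp:noises}, and single-outcome conditional ignorability (Assumption~\ref{hyp:single_ignorability}) all hold, Theorem~\ref{thm:identification} gives $\law{(T, X, Y_t)} = \law{(T, X, f_Y(t, X, U_Y))}$. Equating the two right-hand sides yields $\law{(T, X, Y_t)} = \law{(T, X, Y_{T=t})}$ for all $t \in \T$, i.e.\ single-outcome equivalence of $\mathcal{R}$ and $\mathcal{R}_\M$, hence of $\mathcal{R}$ and $\M$.

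\emph{Negative half.} The only obstruction to Assumption~\ref{hyp:control} in the proof of Proposition~\ref{prop:nas_fa} was the assignment $X \aseq T + U_X$; I would simply replace it by $X \aseq U_X$, keeping $T \aseq U_T$, $Y \aseq T + X + U_Y$, with $U_T, U_X, U_Y$ mutually independent, $U_T$ Bernoulli with parameter $1/2$ and $U_X, U_Y$ centered Gaussians with unit variance. This SCM still satisfies positivity ($\P(T=t \mid X=x) = 1/2$), Assumption~\ref{hyp:outcome}, and Assumption~\ref{hyp:noises} ($U_Y \independent (U_T,U_X)$), and now also Assumption~\ref{hyp:control} since $T \notin \operatorname{Endo}(X)$. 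Keeping the potential outcomes $Y_0, Y_1$ of Equations~\eqref{eq:y0_def} and \eqref{eq:y1_def}, consistency still holds because $Y \aseq X + U_Y$ on $\{T=0\}$ and $Y \aseq 1 + X + U_Y$ on $\{T=1\}$, and cross-outcome conditional ignorability (hence its single-outcome form) is verified by exactly the same symmetry argument as in Proposition~\ref{prop:nas_fa}, which uses only $U_Y \independent (T,X)$ (Lemma~\ref{lm:noise}) and $\law{U_Y} = \law{-U_Y}$. It then remains to separate the cross-outcome distributions: here $X_{T=t} \aseq X$ forces $Y_{T=t} \aseq t + X + U_Y$, hence $\law{Y_{T=1} - Y_{T=0}} = \delta_1$; whereas expanding \eqref{eq:y0_def}--\eqref{eq:y1_def} gives $Y_1 - Y_0 = 1 - 2(1-2T) U_Y$, whose law is Gaussian with mean $1$ and variance $4$ since $(1-2T) U_Y$ is distributed like $U_Y$. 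As $\delta_1$ is not this Gaussian, $\law{(T,X,Y_0,Y_1)} \neq \law{(T,X,Y_{T=0},Y_{T=1})}$, so $\M$ and $\mathcal{R}$ are not cross-outcome equivalent although $\mathcal{R}$ meets Assumption~\ref{hyp:cross_ignorability}.

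The positive half is a short assembly of results already in hand, so the effort concentrates on the counterexample: the delicate point is to confirm that the single edit $X \aseq U_X$ both restores Assumption~\ref{hyp:control} and leaves untouched the symmetry computation that secured cross-outcome ignorability---and then to observe that, precisely because $X$ no longer reacts to $T$, the structural effect $Y_{T=1} - Y_{T=0}$ collapses to the constant $1$, which makes the discrepancy with $\law{Y_1 - Y_0}$ immediate.
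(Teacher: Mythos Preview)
Your proposal is correct and follows essentially the same approach as the paper: the positive half applies Theorem~\ref{thm:identification} together with $X_{T=t}\aseq X$ exactly as the paper does, and your counterexample (the SCM $T\aseq U_T$, $X\aseq U_X$, $Y\aseq T+X+U_Y$ with the potential outcomes from Equations~\eqref{eq:y0_def}--\eqref{eq:y1_def}) is the paper's own, with the same symmetry argument for cross-outcome ignorability and the same discrepancy $\law{Y_{T=1}-Y_{T=0}}=\delta_1$ versus $\law{Y_1-Y_0}=\law{1+2(2T-1)U_Y}$. The only cosmetic differences are that the paper leaves $U_X$ arbitrary and does not bother identifying $\law{Y_1-Y_0}$ as $N(1,4)$.
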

\begin{proof}\textbf{of Corollary~\ref{cor:equivalence}}\mbox{ }
Let $t \in \T$. Under the considered assumptions, we have according to Theorem~\ref{thm:identification}$:\law{(T,X,Y_t)} = \law{(T,X,f_Y(t,X,U_Y))}$. Moreover, recall that $Y_{T=t} \aseq f_Y(t,X_{T=t},U_Y)$. If $X$ contains no descendant of $T$ in $\M$, then $X_{T=t} \aseq X$, leading to $\law{(T,X,Y_t)} = \law{(T,X,Y_{T=t})}$.

We now turn to proving that cross-outcome equivalence does not necessarily hold, even under cross-outcome conditional ignorability. To this end, we consider the following SCM:
\begin{align*}
    T &\aseq U_T,\\
    X &\aseq U_X,\\
    Y &\aseq T + X + U_Y,
\end{align*}
where $U_T$ follows a Bernoulli distribution with parameter $1/2$, $U_X$ is any $\R$-valued random variable, and $U_Y$ follows a centered Gaussian distribution with unit variance, such that $U_T,U_X$ and $U_Y$ are mutually independent. This model satisfies Assumptions~\ref{hyp:outcome}, \ref{hyp:noises} and \ref{hyp:control} by construction. By do-interventions, $Y_{T=0} \aseq U_X + U_Y$ and $Y_{T=1} = 1+U_X +U_Y$. Next, define the potential outcomes as follows:
\begin{align*}
    Y_0 &:= (1-T) \cdot(X + U_Y) + T \cdot (X - U_Y) = U_X +(1-2T) \cdot U_Y,\\
    Y_1 &:= (1-T) \cdot (1+X-U_Y) + T \cdot (1+X+U_Y) = 1 + U_X - (1-2T) \cdot U_Y.
\end{align*}
Let us verify the required assumptions. Firstly, the potential outcomes clearly satisfy consistency, that is $Y = (1-T) \cdot Y_0 + T \cdot Y_1$. Secondly, since $T \independent X$, we have $\P(T=1 \mid X) = \P(T=1) = 1/2$ which entails positivity. Thirdly, $\mathcal{L}((Y_0,Y_1) \mid X=x, T=1) = \mathcal{L}((Y_0,Y_1) \mid T=1)$ because $U_Y \independent (T,X) \aseq (U_T,U_X)$. Additionally, $\mathcal{L}((Y_0,Y_1) \mid T=1) = \mathcal{L}((U_X-U_Y,1+U_X+U_Y)) = \mathcal{L}((U_X+U_Y, 1+U_X-U_Y))$ since $\mathcal{L}(U_Y) = \mathcal{L}(-U_Y)$, and $\mathcal{L}((U_X+U_Y, 1+U_X-U_Y)) = \mathcal{L}((Y_0,Y_1) \mid T=0) = \mathcal{L}((Y_0,Y_1) \mid X=x, T=0)$ using again $U_Y \independent (T,X)$. Wrapping up: $\mathcal{L}((Y_0,Y_1) \mid X=x, T=1) = \mathcal{L}((Y_0,Y_1) \mid X=x, T=0)$, meaning that cross-outcome conditional ignorability holds according to the first part of the proof.

Therefore, the causal models are single-outcome equivalent. To conclude that they are not cross-outcome equivalent, note that $\law{Y_1-Y_0} = \law{1 + 2(2T-1) \cdot U_Y}$ whereas $\law{Y_{T=1}-Y_{T=0}} = \delta_1$.
\end{proof}

This result specifies a large class of SCMs that are single-outcome equivalent to an RCM meeting positivity and conditional ignorability: those in which $Y$ is a conditionally unconfounded outcome (Assumptions~\ref{hyp:outcome} plus \ref{hyp:noises}) and $T$ does not impact $X$ (Assumption~\ref{hyp:control}). If one of the these two sets of hypotheses does not hold, then single-outcome equivalence is not guaranteed. We emphasize that Assumption~\ref{hyp:control} is the most critical; Assumptions~\ref{hyp:outcome} plus \ref{hyp:noises} mainly serve to simplify the analysis (see Remark~\ref{rem:noises}). Notably, if Assumptions~\ref{hyp:outcome} and \ref{hyp:noises} hold but not Assumption~\ref{hyp:control}, then $\P(X_{T=t} \neq X) > 0$ for some $t \in \T$, and it follows from Equation~\eqref{eq:compare} that the invariance $\law{Y_t} = \law{Y_{T=t}}$ for any $t \in \T$ basically occurs in the useless scenarios where $f_Y$ does not really change in response to its $X$-input. Therefore, excluding these pathological cases, Assumption~\ref{hyp:control} through Theorem~\ref{thm:identification} fully classifies single-outcome equivalence between an SCM satisfying Assumptions~\ref{hyp:outcome} plus \ref{hyp:noises} and an RCM under the standard causal-inference regime.\footnote{A similar \say{almost-true} equivalence between a graphical condition like Assumption~\ref{hyp:control} and its structural implications was already discussed in \citep[Section 7.4]{pearl2009causality}, where the so-called graphical and counterfactual criteria of exogeneity coincide after excluding incidental cases.} We now turn to contextualizing this abstract comparison within real-world problems.

\subsection{Interpretation}\label{sec:interpretation}

Section~\ref{sec:comparison} explained under which mathematical conditions potential outcomes $(Y_t)_{t \in \T}$ in the causal-inference setting and structural counterfactuals $(Y_{T=t})_{t \in \T}$ define---or not---the same single-outcome causal estimands. In this section, we discuss these formal results from practical and conceptual perspectives, and exemplify the meaning of reasoning counterfactually with nonequivalent models.

\subsubsection{A practical perspective on cases of (non)equivalence}\label{sec:nonequi}

Up until now, we adopted an analytical viewpoint where the models where abstract mathematical objects. However, an SCM is not a mere detached tool; it is meant to genuinely capture the world's functioning. As such, practitioners do not decide the graphical relationships between endogenous variables themselves, they are imposed by Nature. In contrast, an RCM introduces hypothetical counterfactual variables whose interpretation may depend on the assumptions placed upon them. This critically signifies that single-outcome equivalence between the \emph{true} latent SCM and a potential-outcome model satisfying conditional ignorability does not generically holds in many real-world problems, due to the dichotomy ruled by Assumption~\ref{hyp:control}.

To clarify this point, consider a problem where one relies on an RCM $\mathcal{R} \in \mathfrak{R}_\O$ equipped with positivity and conditional ignorability to apply standard causal-inference techniques. They wonder whether this model is equivalent to the underlying SCM $\M \in \mathfrak{M}_\O$, which is supposed to meet Assumptions~\ref{hyp:outcome} and \ref{hyp:noises}. Answering this question amounts to translating the pivotal causal-ordering assumption between $T$ and $X$ (Assumption~\ref{hyp:control}) and its negation into common language. 

\begin{figure}[t]
    \centering
    \begin{subfigure}[b]{0.32\textwidth}
        \centering
        \begin{tikzpicture}[-latex ,
        state/.style ={circle ,top color =white,
        draw}]
        \node[state] (X){$X$};
        \node[state] (Y) [below right =of X] {$Y$};
        \node[state] (S) [below left =of X] {$T$};
        \path (X) edge (S);
        \path (S) edge (Y);
        \path (X) edge (Y);
        \end{tikzpicture}
        \caption{No posttreatment covariate}
        \label{fig:exogenous_covariates}
     \end{subfigure}
     \hfill
     \begin{subfigure}[b]{0.32\textwidth}
        \centering
        \begin{tikzpicture}[-latex ,
        state/.style ={circle ,top color =white,
        draw}]
        \node[state] (Xo){$X_{\operatorname{en}}$};
        \node[state] (Xi) [above =of Xo] {$X_{\operatorname{ex}}$};
        \node[state] (Y) [below right =of Xo] {$Y$};
        \node[state] (S) [below left =of Xo] {$T$};
        \path (S) edge (Xo);
        \path (Xi) edge (S);
        \path (S) edge (Y);
        \path (Xi) edge (Y);
        \path (Xo) edge (Y);
        \path[-] (Xo) edge (Xi);
        \end{tikzpicture}
        \caption{Pre and posttreatment covariates}
        \label{fig:nonexogenous_covariates}
     \end{subfigure}
    \hfill
     \begin{subfigure}[b]{0.32\textwidth}
        \centering
        \begin{tikzpicture}[-latex ,
        state/.style ={circle ,top color =white,
        draw}]
        \node[state] (X){$X$};
        \node[state] (Y) [below right =of X] {$Y$};
        \node[state] (S) [below left =of X] {$T$};
        \path (S) edge (X);
        \path (S) edge (Y);
        \path (X) edge (Y);
        \end{tikzpicture}
        \caption{No pretreatment covariate}
        \label{fig:exogenous_treatment}
     \end{subfigure}
    \caption{Three possible configurations of the treatment in $\M$ under Assumption~\ref{hyp:outcome}. $X_{\operatorname{ex}} := X_{\operatorname{Endo}(T)}$ denotes the parents of $T$ in $X$ while $X_{\operatorname{en}}$ are the remaining covariates. Exogenous variables are not represented but Assumption~\ref{hyp:noises} holds. A single node can represent several variables. In (a), $T$ does not impact $X$ (Assumption~\ref{hyp:control}); in (b), $T$ may impact some $X$-variables and some $X$-variables may impact $T$; in (c), $X$ does not impact $T$.}
    \label{fig:decisive}
\end{figure}

Assumption~\ref{hyp:control} basically signifies that the covariates are not altered by the treatment, as illustrated in Figure~\ref{fig:exogenous_covariates}. Notably, this configuration encompasses various typical causal-inference scenarios: in clinical trials, the covariates $X$ (sometimes called \emph{pretreatment} variables) may influence the treatment allocation $T$ but never the contrary. In these common situations, both the RCM and the SCM produce the same single-outcome counterfactuals due to Corollary~\ref{cor:equivalence}. In particular, $\E[Y_1 - Y_0 \mid X=x] = \E[Y_{T=1} - Y_{T=0} \mid X=x]$. We crucially remind that equivalence does not necessarily hold at the cross-outcome level (as also stated in Corollary~\ref{cor:equivalence}).

Importantly, the negation of Assumption~\ref{hyp:control}---which states that $T$ is a parent of several (or even all) covariates in $\mathcal{M}$---is also mathematically possible. Figures~\ref{fig:nonexogenous_covariates}~and~\ref{fig:exogenous_treatment} illustrate the possible causal graphs. In these situations, Theorem~\ref{thm:identification} does not guarantee single-outcome equivalence. Consequently, confusion between the two causal approaches can lead to misleading results: according to Equation~\eqref{eq:compare}, the RCM considers counterfactual outcomes at fixed $X$, whereas the SCM alters the covariates into $X_{T=t}$. These cases are empirically relevant, since people also rely on causal inference outside the scope of clinical trials, in settings where the treatment impacts the covariates. For example, $T$ drives $X$ but not the contrary (as in Figure~\ref{fig:exogenous_treatment}) in emblematic causal problems such as the Berkeley's admission paradox where $T$ represents the sex and $X$ the course choice \citep{bickel1975sex}. This is more generally true in the causal-fairness literature, where the variable to alter typically encodes an intrinsic feature like the sex, the race, or the age of individuals (see for instance \citep{kusner2017counterfactual,chiappa2019causal,plecko2020fair,nilforoshan2022causal,barocas-hardt-narayanan} for machine-learning-related research). Other scenarios, notably motivated by epidemiological research, include both pre and posttreatment covariates (as in Figure~\ref{fig:nonexogenous_covariates}) \citep{vanderweele2014effect}. In these cases, the treatment often represents an unstable characteristic of individuals like a habit (smoking) or a fluctuating biological factor (obesity). Section~\ref{sec:illustration} and Appendix~\ref{sec:middle_case} provide detailed illustrations of real-world-inspired problems where Assumption~\ref{hyp:control} does not hold. 

\begin{remark}[No causation without manipulation]\label{rem:holland}
The \say{no causation without manipulation} principle of Holland \cite{holland1986statistics} states that the causal influence of immutable variables like a person's race is ill-defined since we cannot conceptualize an assignment mechanism for such a nonmanipulable status (in contrast, for example, to allocating a medical treatment). There is a long-standing debate among causality scholars on this topic. Some agree that this principle should be a general rule \citep{winship1999estimation,freedman2004graphical,berk2004regression,greiner2011causal}, other argue that addressing nonmanipulable causes is scientifically relevant and proposed interpretations of this practice \citep{vanderweele2014causal,glymour2017evaluating,pearl2018does,pearl2019do}. This question relates to the current discussion around Assumption~\ref{hyp:control}, illustrated by Figure~\ref{fig:decisive}, at two levels.

The first level is independent of the employed causal models. Manipulability is an experimental concept that does not have a definite formal translation. In contrast, Assumption~\ref{hyp:control} is a formal condition on the causal ordering between endogenous variables. As such manipulability and causal ordering do not overlap, but they may intersect when approaching problems from a real-world standpoint, since manipulable and immutable causes usually conform to different graphical configurations in practice. As aforementioned, a \emph{manipulable} medical treatment commonly corresponds to Figure~\ref{fig:exogenous_covariates}, an \emph{immutable} attribute like race commonly corresponds to Figure~\ref{fig:exogenous_treatment}, an \emph{unstable} characteristic like being a smoker commonly corresponds to Figure~\ref{fig:nonexogenous_covariates}.\footnote{It is usual but not necessary: an arguably nonmanipulable feature like height can be positioned as $T$ in Figure~\ref{fig:exogenous_covariates} for being a responder of sex; an arguably manipulable feature like chocolate consumption can be placed as $T$ in Figure~\ref{fig:exogenous_covariates} for having no determinant among the other studied endogenous variables.} As such, some people consider that the \say{no causation without manipulation} maxim implies that the covariates must be fixed, pretreatment variables only \citep[Section 4.1]{imbens2020potential}. According to someone who follows this implication, it does not make sense to tackle problems corresponding to Figures~\ref{fig:nonexogenous_covariates} and \ref{fig:exogenous_treatment}, regardless of whether they work with an RCM or an SCM.

The second level opposes RCMs and SCMs. From a purely mathematical perspective, both causal approaches accept immutable causes: an RCM poses no formal constraints on its treatment variable, and any endogenous variable of an SCM---whatever its graphical position---is eligible to do-interventions. However, RCM analysts have traditionally precluded pretreatment covariates from their models \citep{imbens2015causal}, while SCM analysts have frequently applied do-interventions on variables with no endogenous parents \citep{kusner2017counterfactual,chiappa2019path, plecko2020fair}. According to someone who deems that \say{no posttreatment covariate} specifically constrains RCMs, it does not make sense to compare (as we did) an RCM with the latent SCM in the settings of Figures~\ref{fig:nonexogenous_covariates} and \ref{fig:exogenous_treatment}.

We emphasize that our work does not focus on philosophical arguments: it neutrally analyzes the literature. In particular, we do not contribute to the general debate on manipulability and do not argue in favor or against including implications of Holland's principle specifically in Rubin's framework. From this angle, we justify studying situations with posttreatment covariates and even comparing RCMs and SCMs in such cases for theoretical and empirical reasons. Firstly, as aforementioned, there is nothing that mathematically forbids to study these cases, be it with an RCM or an RCM. Therefore, addressing them is necessary to completely compare the two causal approaches. Secondly, many researchers have actually relied on (or referred to) the potential-outcome framework to understand the influence of immutable variables like sex, race, or biological factors \citep{li2017discrimination, glymour2017evaluating, khademi2019fairness, khademi2020algorithmic, qureshi2020causal}, typically placed like the treatment in Figure~\ref{fig:exogenous_treatment}.\footnote{Other articles, for instance \citep{bertrand2004emily,ridgeway2006assessing,gaebler2022causal}, explicitly focus on sex and race as \emph{perceived} by a decider. Such a perception could depend on the covariates and not be immutable.} Moreover, some references have presented side by side Rubin's approach and Pearl's approach specifically in such contexts, notably in fairness problems \citep{barocas-hardt-narayanan,makhlouf2024causality}. If we consider this corpus of the causal-inference literature to be admissible, then for the sake of empirical relevance we must also compare RCMs and SCMs in cases possibly disobeying \say{no causation without manipulation} or Assumption~\ref{hyp:control}.
\end{remark}

All in all, under the fundamental assumptions of causal inference, equivalence of single-outcome counterfactuals across the two causal models depends on the relationships between the treatment and the covariates, as described in Figure~\ref{fig:decisive}. What typically distinguishes the different configurations is the nature of the so-called treatment. If the treatment can be assigned \emph{a posteriori} (as in Figure~\ref{fig:exogenous_covariates}), then the two notions of counterfactuals coincide. Otherwise (as in Figures~\ref{fig:nonexogenous_covariates} and \ref{fig:exogenous_treatment}) structural counterfactuals and potential-outcome counterfactuals are generically not equal. The concrete example from Section~\ref{sec:ex_intro}, where the SCM fits Figure~\ref{fig:exogenous_treatment} and the potential outcomes of the RCM produce different causal estimands than structural counterfactuals, epitomizes this point.

\subsubsection{A conceptual perspective on cases of (non)equivalence}\label{sec:concept}

We now advance from the practical standpoint to a conceptual perspective. The fact that $\mathcal{R}$ and $\mathcal{R}_{\M}$ are not necessarily single-outcome equivalent under the assumptions of Theorem~\ref{thm:identification} can be interpreted as follows: $(Y_t)_{t \in \T}$ meeting conditional ignorability and $(Y_{T=t})_{t \in \T}$ obtained by do-interventions of $T$ do not carry the same counterfactual semantic. Said differently, their \say{had $T$ been equal to $t$} do not mean the same thing. This can similarly be understood as: $(Y_t)_{t \in \T}$ results from formally distinct interventions than \emph{do}-interventions on $T$. Beyond simply observing that these two approaches may produce distinct outcomes, understanding their respective significations is crucial, since practitioners must be able to explain and justify the estimands they aim to compute. We address this point by reminding some basics of counterfactual reasoning in a broad sense.

Counterfactual reasoning can be defined as thinking about outcomes in hypothetical worlds where some circumstances change from what factually happened while others are kept equal. Crucially, there is not a single way of reasoning counterfactually.\footnote{This nonuniqueness can be appreciated through the seminal work of Lewis on counterfactual conditionals \citep{lewis1973causation, lewis1973counterfactuals,lewis1979counterfactual}. According to Lewis, the verification of counterfactual statements depends on the arbitrary choice of a similarity relation.}  Each way notably depends on what is kept equal across worlds. Theorem~\ref{thm:identification} outlines the difference in this regard between the potential-outcome framework under the fundamental assumptions of causal inference and the structural account of counterfactuals. Recall that under the required assumptions, for every $t \in \T$
\begin{align*}
    &Y \aseq f_Y(T,X,U_Y) \aseq f_Y(T,f_X(T,U_X),U_Y),\\
    &Y_{T=t} \aseq f_Y(t,X_{T=t},U_Y) \aseq f_Y(t,f_X(t,U_X),U_Y),\\
    &\law{(T,X,Y_t)} = \law{(T,X,f_Y(t,X,U_Y)}. 
\end{align*}
Compared to the factual outcome $Y$, structural counterfactuals differ in $T$ but share the same background factors $(U_X,U_Y)$ in an almost-sure sense, while potential outcomes differ in $T$ but share the same input variables $(X,U_Y)$ in a distributional sense. Said differently, up to measure-theoretic technicalities, potential-outcome counterfactuals are \emph{ceteris paribus} counterfactuals (\emph{i.e.}, all other things being kept equal) with respect to all input variables---notably the covariates---whereas structural counterfactuals are \emph{ceteris paribus} counterfactuals with respect to all background factors---but \emph{mutatis mutandis} (\emph{i.e.}, after changing what must be changed) with respect to the covariates. According to Corollary~\ref{cor:equivalence}, these two conceptions of counterfactuals \emph{happen} to coincide in settings like Figure~\ref{fig:exogenous_covariates} where there is no posttreatment covariate. But essentially, they are different.

We emphasize that both definitions of counterfactuals are perfectly legitimate and bear causal interpretations. However, they convey distinct meanings and thereby correspond to different causal effects. Therefore, \emph{they should not be employed for the same purpose}. Let us illustrate this aspect on a concrete case.

\subsubsection{Illustration: an immutable treatment and two different kinds of counterfactuals}\label{sec:illustration}

We conclude Section~\ref{sec:main} by exemplifying the mathematical, practical, and conceptual considerations of the (non)equivalence results from Section~\ref{sec:theorem}. To do so, we compare RCMs and SCMs under the assumptions of Theorem~1 in concrete examples that do not satisfy Assumption~\ref{hyp:control} so that single-outcome equivalence does not hold. More precisely, in what follows we further study the motivating example from Section~\ref{sec:ex_intro} which corresponds to Figure~\ref{fig:exogenous_treatment}. For concision, we defer to Appendix~\ref{sec:middle_case} the analysis of a scenario adapted to Figure~\ref{fig:nonexogenous_covariates}.

We firstly remind the example's setting while properly defining the considered causal models. The treatment status $T$ indicates the gender; the covariate $X$ quantifies the level of work experience; the outcome $Y$ evaluates a candidate's application for some position. Suppose that the associated observational vector $\O := (T,X,Y)$ is ruled by the following SCM $\M \in \mathfrak{M}_\O$:
\begin{align*}
    T &\aseq U_T,\\
    X &\aseq \alpha T + U_X,\\
    Y &\aseq X + \beta T + U_Y,
\end{align*}
where $\alpha$ and $\beta$ are deterministic parameters. This model satisfies Assumption~\ref{hyp:outcome} and the negation of Assumption~\ref{hyp:control} by design. Moreover, we suppose that positivity (Assumption~\ref{hyp:positivity}) is true (for instance by choosing $U_X$ inducing a Gaussian distribution), and that $U_Y \independent (U_T,U_X)$ so that Assumption~\ref{hyp:noises} holds. Finally, we set two potential outcomes $(Y_0,Y_1)$ meeting the consistency rule, that is $Y \aseq (1-T) \cdot Y_0 + T \cdot Y_1$, and single-outcome conditional ignorability (Assumption~\ref{hyp:single_ignorability}). This defines $\mathcal{R} := (T,X,Y_0,Y_1) \in \mathfrak{R}_\O$. Note that all the assumptions of Theorem~\ref{thm:identification} are satisfied, and that the SCM fits Figure~\ref{fig:exogenous_treatment}.

We recall the estimands that an analyst working with $\mathcal{R}$ and an analyst working with $\mathcal{R}_{\M}$ respectively obtain for the average causal effect of $T$ onto $Y$ conditional to $X=x$. The former analyst gets the following CATE based on potential outcomes:
\begin{align*}
    \operatorname{CATE}_{\mathcal{R}}(x) &:= \E[Y_1-Y_0 \mid X=x]\\
    &= \E[(X + \beta + U_Y) - (X + U_Y) \mid X=x]\\
    &= \beta.
\end{align*}
This time, we applied Theorem~\ref{thm:identification} in the first step of the above calculus to illustrate that it gives the same results as using Lemma~\ref{lm:sw} plus $U_Y \independent (T,X)$ like in Section~\ref{sec:ex_intro}. The latter analyst gets the following CATE based on structural counterfactuals:
\begin{align*}
    \operatorname{CATE}_{\mathcal{R}_{\M}}(x) &:= \E[Y_{T=1}-Y_{T=0} \mid X=x]\\
    &= \E[(\alpha + U_X + \beta + U_Y) - (U_X + U_Y) \mid X=x]\\
    &= \alpha + \beta.
\end{align*}
As noticed in Section~\ref{sec:ex_intro}, $\operatorname{CATE}_{\mathcal{R}} \neq \operatorname{CATE}_{\mathcal{R}_{\M}}$ if $\alpha \neq 0$. Because these estimands are single-outcome effects, we conclude that $\mathcal{R}$ and $\M$ are not single-outcome equivalent, as expected by the graphical discussion from Section~\ref{sec:nonequi}. Similar conclusions hold in settings with pre and posttreatment covariates (Figure~\ref{fig:nonexogenous_covariates}), as demonstrated in Appendix~\ref{sec:middle_case}.

Let us interpret in more details the obtained CATEs through the conceptual lens of Section~\ref{sec:concept}. Observe that $\operatorname{CATE}_{\mathcal{R}}$ measures only the \emph{direct effect} of the treatment in $\M$: it completely ignores the dependence of $Y$ on $T$ through $X$, as it involves only $\beta$. In contrast, remark that $\operatorname{CATE}_{\mathcal{R}_{\M}}$ measures the \emph{total effect} of the treatment: it takes into account the whole path of influence of $T$ onto $Y$, involving both $\alpha$ and $\beta$. This is due to the \emph{ceteris paribus/mutatis mutandis} significations of the associated counterfactual variables: by holding $X$ fixed, $(Y_0,Y_1)$ blocks information that does not flow \emph{directly} from $T$; by letting $X$ change, $(Y_{T=0},Y_{T=1})$ captures also information that flows \emph{indirectly} from $T$ through $X$. From a fairness perspective, $\operatorname{CATE}_{\mathcal{R}}$ says that if $\beta = 0$, that is if $T$ is not a \emph{direct} cause of $Y$, then the application process if fair; whether it is unfair towards men or women when $\beta \neq 0$ depends on the sign of $\beta$. In contrast, $\operatorname{CATE}_{\mathcal{R}_{\M}}$ says that if $\beta = -\alpha$, that is if the decision rule $Y$ compensates the discrepancy of work experiences $X$ across genders $T$, then the application process is fair. Each analysis points out a different notion of fairness: considering $\operatorname{CATE}_{\mathcal{R}_{\M}}$ as a fairness criterion suggests that recruiters should correct societal inequalities by preferring women with potentially lower work experience but higher merit whereas relying on the $\operatorname{CATE}_{\mathcal{R}}$ suggests it is only explicitly including the gender in the decision-rule pipeline that is unfair.\footnote{Such a differentiation is closely related to the notion of \emph{path-specific counterfactual fairness} \citep{chiappa2019path}.} Critically, if $\alpha \neq 0$, \emph{practitioners mixing potential outcomes with structural counterfactuals could reach contradictory conclusions on fairness.} This is why having a clear understanding of the semantic carried by the employed counterfactual outcomes is crucial. We conclude the section with additional remarks raised by this example.

\begin{remark}[About the structural assumptions]\label{rem:noises} At the end of Section~\ref{sec:theorem}, we argued that the equivalence between an RCM satisfying the fundamental assumptions of causal inference and the latent SCM was mostly governed by Assumption~\ref{hyp:control} (no posttreatment covariate), while Assumptions~\ref{hyp:outcome} and \ref{hyp:noises} mainly served to derive general results. Let us illustrate this point on the above example.

To emphasize the respective roles of Assumptions~\ref{hyp:outcome}, \ref{hyp:noises} and \ref{hyp:control}, we consider the same setting but we do not assume $U_Y \independent (U_X,U_T)$ anymore. Consequently, Assumption~\ref{hyp:noises} does not necessarily hold and $U_Y$ may depend on $(T,X)$. This relaxation does not affect $\operatorname{CATE}_{\mathcal{R}_{\M}}$ which remains equal to $\alpha + \beta$. As Theorem~\ref{thm:identification} cannot be employed, we compute $\operatorname{CATE}_{\mathcal{R}}$ via Lemma~\ref{lm:sw}. This gives:
\begin{align*}
    \operatorname{CATE}_{\mathcal{R}}(x) &:= \E[Y_1-Y_0 \mid X=x]\\
    &= \E[Y \mid X=x, T=1] - \E[Y \mid X=x, T=0]\\
    &= \E[x + \beta + U_Y \mid X=x, T=1] - \E[x + U_Y \mid X=x, T=0]\\
    &= \beta + \E[U_Y \mid X=x, T=1] - \E[U_Y \mid X=x, T=0].
\end{align*}
In comparison to before, the difference $\E[U_Y \mid X=x, T=1] - \E[U_Y \mid X=x, T=0]$ can be nonzero. Remark that, except in the very specific case where $\E[U_Y \mid X=x, T=1] - \E[U_Y \mid X=x, T=0] = \alpha$ for $\law{X}$-almost every $x$, we still have $\operatorname{CATE}_{\mathcal{R}} \neq \operatorname{CATE}_{\mathcal{R}_{\M}}$. This means that $\mathcal{R}$ and $\mathcal{R}_{\M}$ are not single-outcome equivalent when even when Assumptions~\ref{hyp:outcome} and \ref{hyp:noises} do not hold.
\end{remark}

\begin{remark}[Computing direct effects from an SCM]\label{rem:effect}
One can still define and compute $\operatorname{CATE}_{\mathcal{R}}$, that is the \emph{direct} effect, using an SCM $\M$. We propose two different expressions of this quantity with \emph{structural} terms in the setup of the fairness illustration,
\begin{align}
    \operatorname{CATE}_{\mathcal{R}}(x) &= \E[(x + \beta + U_Y) - (x + U_Y)],\nonumber\\
    &= \E[Y_{T=1,X=x} - Y_{T=0,X=x}],\label{eq:cf_expr}\\
    &= \E[Y_{T=1} \mid X_{T=1}=x] - \E[Y_{T=0} \mid X_{T=0}=x]\label{eq:int_expr}.
\end{align}
Leveraging $\mathcal{R}$ instead of $\M$ permits to compute the direct effect without specifying $\M$. In contrast, one cannot always define and compute the \emph{total} effect $\operatorname{CATE}_{\mathcal{R}_{\M}}$ from $\mathcal{R}$. More generally, by expressing the single-outcome expression of $\mathcal{R}$ in terms of $\M$, Theorem~\ref{thm:identification} enables one to find SCM-based formulas of effects derived from $\mathcal{R}$ under the fundamental assumptions of causal inference.

This remark also serves to precise a key message of the paper. Note that Equation~\eqref{eq:cf_expr} employs interventions on $(T,X)$, while Equation~\eqref{eq:int_expr} employs interventions solely on $T$ but involves the post-intervention covariates. Critically, $\operatorname{CATE}_{\mathcal{R}}$ cannot be written with $(T,X,Y_{T=0},Y_{T=1})$ only. This means that $\operatorname{CATE}_{\mathcal{R}}$ is identifiable in $\M$ but \emph{not} in $\mathcal{R}_{\M}$. Therefore, one should not conclude from our results that the effects derived from an RCM $\mathcal{R}$ in the causal-inference setting are in general incompatible with the ones derived from the latent SCM $\M$. What we show is that swapping the respective counterfactual-outcome variables relative to $T$ of these models, which amounts to exchanging $\mathcal{R}$ and $\mathcal{R}_{\M}$, generally defines distinct estimands. In other words, our work highlights a difference between potential outcomes meeting classical assumptions and do-interventions on $T$ in $\M$, rather than between RCMs and SCMs (see also Remark~\ref{rem:intervention}).
\end{remark}

To sum-up, each each type of counterfactual variables has a different causal signification in this immutable-treatment configuration, and therefore produces different causal estimands. This signifies that the difference between frameworks does not amount to methodological considerations only. Analysts and researchers should also justify the chosen model and its assumptions depending on the kind of causal effects they want to compute. In the final section, we include the conceptual and practical divergences between models pointed out by this illustration into a more comprehensive reflection.  

\section{Discussion}\label{sec:consequences}

This section examines common practices in the causal-inference literature in light of the mathematical differences between models explained in Section~\ref{sec:main}. Firstly, Section~\ref{sec:choice} clarifies the relation between our contribution and the notorious formal equivalence between frameworks, unveiling a fundamental dichotomy in the applications of potential outcomes. Secondly, Section~\ref{sec:exchange} leverages this discussion to provide recommendations on the exchange of potential-outcome and do notations.

\subsection{On the formal equivalence between frameworks}\label{sec:choice}

As mentioned in the introduction, many articles interchangeably use the potential-outcome notation and the do notation, invoking an \say{equivalence} between causal frameworks. This may seem paradoxical after reading Section~\ref{sec:main}. In the following, we clarify this aspect. Then, we explain what a conflict between models like Section~\ref{sec:illustration} signifies to the relationship between SCMs and RCMs, and to how people generally manipulate these models.

\subsubsection{Structural representation and graphical translation}

Recall that every $\M \in \mathfrak{M}_\O$ entails an RCM $\mathcal{R}_\M \in \mathfrak{R}_\O$ according to Lemma~\ref{lem:consistency_do}. Conversely, any $\mathcal{R} \in \mathfrak{R}_\O$ can be \emph{represented} by an SCM.
\begin{proposition}[Structural representation]\label{prop:existence}
Let $\O$ be an observational vector. For any $\mathcal{R} \in \mathfrak{R}_\O$, there exists $\M_{\mathcal{R}} \in \mathfrak{M}_\O$ such that $\M_{\mathcal{R}}$ and $\mathcal{R}$ are almost-surely equivalent. We say that $\M_{\mathcal{R}}$ is a \emph{structural representation} of $\mathcal{R}$.
\end{proposition}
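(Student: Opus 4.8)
The plan is to exhibit $\M_{\mathcal{R}}$ via a single exogenous noise that deterministically encodes the \emph{entire} potential-outcome vector, together with structural equations that merely "read off'' each endogenous variable from it. First I would invoke the realizability fact recalled in Section~\ref{sec:notations}: we may assume $\Omega = [0,1]$ with $\P$ the uniform measure (alternatively, on a general space, take for $U_0$ a Borel encoding of the range of $(T,X,(Y_t)_{t \in \T})$ into a Borel subset of $\R$), so that $U_0 := \mathrm{id}_\Omega$ is a scalar random variable with respect to which $T$, $X$ and all the $Y_t$ are measurable functions; write $T \aseq \tau(U_0)$, $X \aseq \chi(U_0)$, $Y_t \aseq \psi_t(U_0)$ for the associated maps. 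Since $\J$ will be the singleton $\{U_0\}$, the mutual-independence requirement of Definition~\ref{def:scm} is vacuous, and the footnote following that definition explicitly authorizes $U_0$ to be a common exogenous parent of several endogenous nodes.

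Next I would set $\M_{\mathcal{R}} := \langle U_0, g\rangle$ on $\I := \{T,X,Y\}$ and $\J := \{U_0\}$ with $g_T(u_0) := \tau(u_0)$, $g_X(u_0) := \chi(u_0)$ and $g_Y(t',u_0) := \sum_{t \in \T} \mathbf{1}_{\{t'=t\}}\, \psi_t(u_0)$ (which equals $\psi_{t'}(u_0) \in \R^p$ for $t' \in \T$), so that $\operatorname{Exo}(T) = \operatorname{Exo}(X) = \operatorname{Exo}(Y) = \{U_0\}$, $\operatorname{Endo}(Y) = \{T\}$ and $\operatorname{Endo}(T) = \operatorname{Endo}(X) = \emptyset$. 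The induced graph has $U_0$ pointing to each of $T,X,Y$ plus the single endogenous edge $T \to Y$, hence is a directed acyclic graph, so the SCM admits a unique solution by the unique-solvability result recalled in Section~\ref{sec:pearl}. By construction this solution equals $\big(\tau(U_0), \chi(U_0), \sum_{t \in \T}\mathbf{1}_{\{\tau(U_0)=t\}}\psi_t(U_0)\big)$, which is $\P$-almost surely equal to $\big(T, X, \sum_{t \in \T}\mathbf{1}_{\{T=t\}}Y_t\big) = (T,X,Y) = \O$ by the consistency rule of $\mathcal{R}$; thus $\M_{\mathcal{R}} \in \mathfrak{M}_\O$. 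Finally, for each $t \in \T$ I would apply Definition~\ref{def:dointervention}: the intervention $\operatorname{do}(T=t)$ replaces $g_T$ by the constant $t$ and leaves $g_X,g_Y$ untouched, so the modified solution satisfies $T_{T=t} \aseq t$, $X_{T=t} \aseq \chi(U_0) \aseq X$ (because $g_X$ does not involve $T$), and $Y_{T=t} \aseq g_Y(t,U_0) = \psi_t(U_0) \aseq Y_t$. Hence $(T,X,(Y_{T=t})_{t \in \T})$ and $(T,X,(Y_t)_{t \in \T})$ coincide $\P$-almost surely component by component, i.e.\ $\M_{\mathcal{R}}$ and $\mathcal{R}$ are almost-surely equivalent in the sense of Definition~\ref{def:representation}(i).

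The one genuine subtlety — and the reason a more "natural'' representation is unavailable — is that one cannot in general realize $\mathcal{R}$ by a Markovian SCM with a chain $T \to X \to Y$ and mutually independent scalar noises $U_T,U_X,U_Y$: in such a model the post-intervention tuple $(Y_{T=t})_{t \in \T} = (f_Y(t,f_X(t,U_X),U_Y))_{t \in \T}$ is a deterministic function of $(U_X,U_Y)$ alone, hence independent of $T = f_T(U_T)$, whereas the prescribed potential outcomes $(Y_t)_{t \in \T}$ are generally correlated with $T$ (consistency constrains them only on $\{T=t\}$). Matching the cross-world joint law therefore forces the outcome-generating noise to also drive $T$, which is precisely what the single shared noise $U_0$ accomplishes. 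Everything else is routine — existence of the encoding $U_0$, measurability of the $g_i$, acyclicity, and the one-line do-intervention computation — so I do not anticipate any obstacle beyond correctly invoking the standard realizability statement from Section~\ref{sec:notations} and being careful that the intervention on $T$ leaves both the noise and the $X$-equation intact.
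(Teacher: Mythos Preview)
Your proposal is correct and follows essentially the same route as the paper: both construct a single exogenous noise $U$ from which $T$, $X$, and all $Y_t$ are measurable functions, then set $g_T$, $g_X$ to read off $T$, $X$ from $U$ and $g_Y(t,u)$ to read off $Y_t$ from $U$, so that $\operatorname{do}(T=t)$ yields $Y_{T=t} \aseq Y_t$. The only cosmetic differences are that the paper obtains $U$ via a push-forward argument (Kechris measurable isomorphism plus generalized inverse CDF) rather than directly taking $U_0 = \mathrm{id}_\Omega$ under $\Omega = [0,1]$, and that the paper indexes $g$ by scalar components $g_{X_i}, g_{Y_i}$ to conform to Definition~\ref{def:scm} rather than treating $g_X, g_Y$ as vector-valued; neither difference affects the substance, and your extra paragraph explaining why a Markovian chain $T \to X \to Y$ with independent noises cannot work is a nice addition not present in the paper.
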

This result can be seen as a variant of \citep[Proposition 1]{ibeling2024comparing} in our specific setting. The expression \emph{structural representation} is inspired from it.

Interestingly, if one chooses to define potential outcomes as structural counterfactuals from a structural representation, then Rubin's causal framework and Pearl's causal framework become two different languages to talk about the same objects. In the RCM, assumptions for causal inference are generally framed as conditional-independence restrictions on counterfactual variables (like Assumptions~\ref{hyp:cross_ignorability} and \ref{hyp:single_ignorability}); in Pearl's causal framework, assumptions on causal relationships are generally framed in terms of graphical conditions on factual variables (like Assumptions~\ref{hyp:outcome}, \ref{hyp:noises} and \ref{hyp:control}). Both \citep[Chapter 7]{pearl2009causality} and \citep{richardson2013single} focus on unifying these two mathematical languages by providing rules for translating assumptions and theorems from one viewpoint to the other. This ensures what people often refer as the \emph{logical} or \emph{formal equivalence} between frameworks. We crucially emphasize that, in contrast to the notions in Definition~\ref{def:representation}, this corresponds to an equivalence between formalisms---not between given causal models. This formal equivalence notably allows analysts to work symbiotically with an RCM $\mathcal{R}$ and a structural representation $\M_{\mathcal{R}}$, or with an SCM $\M$ and its entailed RCM $\mathcal{R}_\M$.

\begin{remark}[Graphical interpretation of conditional ignorability]\label{rem:backdoor}
A classic translation rule is \citep[Theorem 4.3.1]{pearl2016causal}, which states that if $X$ meets the \emph{backdoor criterion} relative to $(T,Y)$ in $\M$ \citep[Definition 3.3.1]{pearl2009causality}, then $Y_{T=t} \independent T \mid X$ for all $t \in \T$: said differently, single-outcome conditional ignorability holds in $\mathcal{R}_\M$. This counterfactual interpretation of graphical conditions echoes Corollary~\ref{cor:equivalence}. Our result states that if an SCM satisfies Assumptions~\ref{hyp:outcome}, \ref{hyp:noises} and \ref{hyp:control}, then it is single-outcome equivalent to an RCM such that $Y_t \independent T \mid X$ for all $t \in \T$ (Assumption~\ref{hyp:single_ignorability}). The connection between \citep[Theorem 4.3.1]{pearl2016causal} and Corollary~\ref{cor:equivalence} follows from the fact that Assumptions~\ref{hyp:outcome}, \ref{hyp:noises} and \ref{hyp:control} (illustrated by Figure~\ref{fig:exogenous_covariates}) entail that $X$ meets the backdoor criterion relative to $(T,Y)$ in $\M$.

Interestingly, the two results differ from their perspectives. Corollary~\ref{cor:equivalence} reframes Pearl's translation rule in terms of equivalence between presumably \emph{separated} SCMs and RCMs. This change of viewpoint helps understanding some unappreciated aspects of the formal equivalence, as discussed below. 
\end{remark}

\subsubsection{Two aspects of the formal equivalence}

There is no logical contradiction between the results from Section~\ref{sec:main} and this formal equivalence. Nevertheless, our contribution highlights two overlooked features.

Firstly, Proposition~\ref{prop:existence} or \citep[Proposition 1]{ibeling2024comparing} simply \emph{allow} to represent an RCM by an SCM. Ultimately, defining potential outcomes as structural counterfactuals is a \emph{choice}---it does not rest on any proof. When Pearl writes $Y_t := Y_{T=t}$ in \citep[Equation 3.51]{pearl2009causality}, claiming that the operation $\operatorname{do}(T=t)$ on the SCM gives a physical meaning to the vague \say{had $T$ been $t$} of the potential outcome, this is an arbitrary choice. As demonstrated by Propositions~\ref{prop:nas} and \ref{prop:nas_fa}, naively looking at the definitions of the Rubin's causal framework and of Pearl's causal framework, there is nothing that mathematically constrains potential outcomes to coincide at any level with the structural counterfactuals of a chosen SCM.\footnote{From a more philosophical angle, \citep[Section 2.1]{markus2021causal} made a similar remark to argue that the two frameworks were \emph{weakly} equivalent rather than \emph{strongly} equivalent.} What \citep[Chapter 7]{pearl2009causality} and \citep{richardson2013single} really show is that \emph{if} potential outcomes are chosen to be structural counterfactuals, then one can translate the assumptions made on an underlying causal graph into assumptions on potential outcomes---not that they \emph{must} be chosen as such.

Secondly, even though there always exist \emph{theoretical} structural representations of a given RCM $\mathcal{R}$, nothing guarantees that they correctly capture the real-world causal dependencies. In other words, the true SCM $\M$ may not represent $\mathcal{R}$. Notably, Section~\ref{sec:nonequi} specifies situations where an RCM $\mathcal{R}$ tailored to causal inference is not single-outcome equivalent to the true SCM $\M$ (as illustrated in Section~\ref{sec:illustration}), which implies that it cannot be represented by $\M$.

The fact that (1) using the formal equivalence is a choice and (2) applying the formal equivalence on a predefined RCM may produce an SCM conflicting with the true one has important consequences on its practicality that we discuss next. 

\subsubsection{Two fundamentally distinct paradigms for potential outcomes}

An SCM is meant to correctly capture the cause-effect links between the variables of interest. This is why no analyst deliberately works with an SCM that is apparently wrong. In particular, it does not make sense to use an RCM $\mathcal{R}$ in synergy with a structural representation if the true SCM $\M$ itself does not represent $\mathcal{R}$. Who would axiomatize potential outcomes via do-interventions in a fake SCM? Under this principle, two paradigms for defining and applying a potential-outcome model $\mathcal{R}$ coexist.

On the one hand, Pearl has firmly advocated for long to \emph{always} use the potential-outcome framework in symbiosis with an SCM. This rule induces a first paradigm which amounts to defining the RCM as $\mathcal{R} := \mathcal{R}_\M$ where $\M$ is the \emph{true} SCM. This means letting $(Y_t)_{t \in \T} = (Y_{T=t})_{t \in \T}$. In this approach, properties of the potential outcomes (like conditional ignorability) do not come from fundamental assumptions but follow from hypotheses made on $\M$, generally framed as graphical conditions (recall Remark~\ref{rem:backdoor}). Note that this forbids the practice from Section~\ref{sec:illustration} where we employed $\mathcal{R}$ satisfying conditional ignorability whereas Assumption~\ref{hyp:control} did not hold in $\M$. According to Pearl, a key interest of this approach comes from the fact that conditional-independence assumptions of RCMs are hard to interpret, while the causal graph and structural equations of an SCM form an intelligible formalization of scientific knowledge from which such assumptions can be derived and justified. We refer to \citep[Section 7.4]{pearl2009causality} for his detailed argumentation. Sections~\ref{sec:concept} and \ref{sec:choice} underline that following this paradigm consists in making a specific choice regarding the hypothetical interventions defining the potential outcomes, thereby regarding their counterfactual semantics: it formally defines their \say{had $T$ been $t$} as the operation $\operatorname{do}(T=t)$ in $\M$ which changes the remaining endogenous variables accordingly. 

On the other hand, defining an RCM differently than by do-interventions on the true SCM is mathematically doable, as shown several times throughout Section~\ref{sec:main}. Adopting this second paradigm does not mean denying the formal equivalence between frameworks; it means not \emph{applying} it. In this approach, potential outcomes $(Y_t)_{t \in \T}$ are typically primitives of the model not necessarily derived from $\M$ (recall Remark~\ref{rem:versus} and Figure~\ref{fig:principle}): the analyst may directly place assumptions on $\mathcal{R}$ (like conditional ignorability) regardless of what holds in $\mathcal{R}_\M$. But they can also be derivatives of $\M$ (or of another type of models) by other actions than do-interventions (see Remark~\ref{rem:intervention}). What characterizes the paradigm is essentially not a matter of \say{primitive versus derivative}, rather the fact that potential outcomes are not axiomatized by do-interventions on $T$ in $\M$. This construction implies that the structural representations of $\mathcal{R}$ may differ from $\M$ and should thereby be ignored. The chosen (or derived) assumptions define the semantics of the counterfactual statements associated to $\mathcal{R}$. Recall that Section~\ref{sec:concept} notably showed that dressing potential outcomes with conditional ignorability defines their \say{had $T$ been $t$} as switching $T$ into $t$ while keeping all remaining variables unchanged (at the single-outcome level). Other assumptions could give other semantics. A methodological interest of this paradigm notably comes from the possibility to compute the \emph{direct} causal effects of treatments breaking Assumption~\ref{hyp:control} through statistical methods without knowing $\M$ (as explained in Remark~\ref{rem:effect}). This practice is perhaps controversial and philosophically debatable. Nevertheless, we point out that (even though it is generally implicitly done) \emph{not} defining potential outcomes as the true structural counterfactuals is actually something common in the scientific literature. In Remark~\ref{rem:holland}, we mentioned a series of articles leveraging the potential-outcome framework equipped with the fundamental assumptions of causal inference to compute the effects of immutable characteristics \citep{li2017discrimination, glymour2017evaluating, khademi2019fairness, khademi2020algorithmic, qureshi2020causal, makhlouf2024causality}. As explained in Section~\ref{sec:interpretation}, in such cases $\mathcal{R}$ is generally not even single-world equivalent to $\mathcal{R}_{\M}$. This positions these works in the second paradigms. Therefore, if we acknowledge them, then we must accept that unifying potential outcomes and structural counterfactuals is not an obligation.

\begin{remark}[About Holland's principle in the second paradigm]
Consider a practitioner who thinks that \say{no causation without manipulation} applies to RCMs but not to SCMs, and thereby only follows the second paradigm with no posttreatment covariate (Assumption~\ref{hyp:control}). To them, Corollary~\ref{cor:equivalence} ensures that whenever the potential-outcome framework can be applied, the employed RCM is at least single-outcome equivalent to the latent SCM under Assumptions~\ref{hyp:outcome} and \ref{hyp:noises}. This appear to rule out situations where potential outcomes may not coincide with structural counterfactuals in the second paradigm, such as the one from Sections~\ref{sec:ex_intro} and \ref{sec:illustration} (and the one from Appendix~\ref{sec:middle_case}). Let us comment on this. Firstly, even in cases like Figure~\ref{fig:exogenous_covariates}, cross-world or almost-sure equivalence is not guaranteed. This means that substituting $\mathcal{R}$ by $\mathcal{R}_{\M}$ is generally incorrect at stronger counterfactual-reasoning levels. This is why such a practitioner should still be careful. Secondly, as mentioned above, addressing a treatment not satisfying Assumption~\ref{hyp:control} with an RCM meeting the fundamental assumptions of causal inference is both theoretically possible and empirically done. This is why we discuss the implications of such a choice.
\end{remark}

\begin{remark}[About interventions and the true SCM in the second paradigm]\label{rem:intervention}
We emphasize that adopting the second paradigm does not mean abandoning the \emph{true} SCM $\M$; it means ignoring the \emph{structural representations} of the RCM. The sole consequence of this approach is that the causal interpretation of $(Y_t)_{t \in \T}$ possibly differs from the one of $(Y_{T=t})_{t \in \T}$ (as exemplified in Section~\ref{sec:concept}). Said differently, the second paradigm marks a tension between potential outcomes and \emph{do-interventions} on $T$ in $\M$ rather than between potential outcomes and the \emph{model} $\M$ itself. This echoes the conclusion of Remark~\ref{rem:effect}.

Notably, one could define $(Y_t)_{t \in \T}$ through other types of interventions in $\M$ than do-interventions. In particular, Theorem~\ref{thm:identification} shows that the single-outcome laws of $\mathcal{R}$ under the fundamental assumption of causal inference do not necessarily result from a do-intervention on $T$ in $\M$, but rather from an action that fixes $T$ to $t$ only in the equations corresponding to $Y$ in $\M$. We could for instance define the entailed RCM of $\M$ via such an intervention to reconcile the two causal models at this level.
\end{remark}

In the motivating example from Section~\ref{sec:ex_intro}, analyst $\M$ and analyst $\mathcal{R}$ respectively adopt the first and second paradigms to define their counterfactual outcomes. As such, analyst $\M$ exploits the formal equivalence between frameworks---not analyst $\mathcal{R}$. It feels that Pearl's rule to always leverage an SCM as the axiomatic characterization of potential outcomes through do-interventions possibly made unclear the existence of these two paradigms. We emphasize that we do not discuss which paradigm people should follow. Instead, we neutrally classify what people \emph{actually} do, and explain the implicit meaning of these practices. All in all, each approach can be legitimate; \emph{what crucially matters is having a clear understanding of the produced counterfactuals and being transparent about the choices made}. If an analyst aims to compute counterfactual estimands carrying a \emph{mutatis mutandis} signification as defined by do-interventions, then they should explicitly mention that they follow the first paradigm, and specify the true SCM with at least the key graphical relationships. If an analyst aims to compute counterfactual estimands carrying another signification, then they should explicitly mention that they follow the second paradigm, and be clear about the counterfactual interpretation of their assumptions (\textit{e.g.}, \emph{ceteris paribus} via conditional ignorability). The following case must be avoided: an RCM analyst does not explicitly define their model via a partially specified SCM, misinterprets the formal equivalence and hence wrongly believes that their potential outcomes necessarily derive from do-interventions on the true SCM, therefore computes distinct causal estimands than what they expected. In this sense, we conclude this article by defending a more cautious use of notations.

\subsection{On the exchange of notations}\label{sec:exchange}

Following the conclusion of Section~\ref{sec:choice}, this subsection discusses what researchers and practitioners should do. More specifically, our suggestions do not concern the methods people use, but how they \emph{present} their approaches and their results. Concretely, we argue that exchanging the do notation and the potential-outcome notation should be done with greater care than what is commonly done in the literature. A notation is the identification of a mathematical object. Therefore, a same notation can be used for two differently-defined objects just in case they are mathematically equal. In the first paradigm, where the potential outcomes are chosen as the true structural counterfactuals, the notational exchange with the do is valid; not in the second paradigm. In what follows, we examine books and articles referring to the formal equivalence between causal frameworks in a confusing way, to underline the importance of specifying and justifying the adopted paradigm as well as using adequate notations.\footnote{We do not suggest that these references contain erroneous claims, only that the presentation of specific aspects can be misleading. Beyond that, we strongly recommend reading them.} 

For example, \cite[Section 5]{colnet2024causal} commences with a potential-outcome model, and then invoke the formal equivalence to substitute its notations by do notations. Because writing a \say{do} only makes sense when there is an SCM involved, this exchange implicitly engages a structural representation of their RCM. But recall that nothing guarantees that this structural representation corresponds at the single-outcome level to the true SCM. In their case, everything works fine precisely because only settings with no posttreatment covariate are considered, only single-outcome causal effects are studied, and their RCM satisfies conditional ignorability. However, they never explicitly state Assumption~\ref{hyp:control} and even less explain its crucial role. While this may seem harmless in practice, this could mislead people to wrongly believe that this equivalence holds in general: it would generally not hold with posttreatment covariates or at stronger counterfactual-reasoning levels. Even from a purely logical viewpoint, working in this favorable scenario should not obviate proper justification to why $\law{(T,X,Y_t)} = \law{(T,X,Y_{T=t})}$ for every $t \in \T$. We suggest two options to clarify \cite[Section 5]{colnet2024causal}. The first is specifying that the first paradigm is adopted to define the RCM, and explaining how its assumptions (conditional ignorability) are derived from the true SCM (backdoor criterion). The second is specifying that the second paradigm is adopted to define the RCM, and justifying why the RCM and the true SCM happen to be single-outcome equivalent (Corollary~\ref{cor:equivalence}). Replacing the potential-outcome notations of the RCM with do notations only makes sense in the first option.

The case of \citep[Chapter 5]{barocas-hardt-narayanan} and \cite{makhlouf2024causality} also demands caution. These surveys introduce the two causal frameworks specifically in the context of fairness with identical notations on counterfactual variables and suggest that the appropriate choice of framework is mostly a matter of methodological considerations because of the formal equivalence. Additionally, they present RCM-based inference techniques specifically relying on conditional ignorability. This feels pedagogically dangerous. As repeated, in settings with posttreatment covariates---fairness problems typically---one can generally not interchangeably manipulate the counterfactual outcomes of an RCM under the fundamental assumptions for causal inference and the ones of the \emph{true} SCM. Section~\ref{sec:illustration} notably exemplified how mixing the notations could lead to contradictory results. In such settings, SCM counterfactuals and RCM counterfactuals correspond to different paradigms. Leveraging both paradigms in a same work requires distinct notations to prevent confusion. This is particularly critical when the treatment may impact the covariates. 

These examples underline that the formal equivalence in itself is not a correct justification for exchanging do notations and potential-outcome notations in an RCM: the key argument is the paradigm followed to define the RCM. Writing a \say{do} exclusively makes sense in studies clearly following the first paradigm. Writing a potential-outcome notation is acceptable in both paradigms (since structural counterfactuals are potential outcomes) but distinct notations are needed when relying on both paradigms for a same problem.

To summarize Section~\ref{sec:consequences}, we think that the scope and implications of the formal equivalence between causal frameworks can be misleading. In particular, it does not mean that $\mathcal{L}((T,X,(Y_t)_{t \in \T})) = \mathcal{L}((T,X,(Y_{T=t})_{t \in \T}))$ in general; it means that such an exchange can hold \emph{if equivalent assumptions are made across models}. Supposing distinct axioms across models means giving distinct interpretations to their respective counterfactual outcomes, which thereby relate to distinct causal interventions and causal effects. This is why we recommend to present Rubin's causal models and Pearl's causal models as distinct mechanisms for reasoning counterfactually that coincide under specific assumptions and choices, rather than merely different perspectives.

\section{Conclusion}

In this paper, we superimposed Pearl's causal framework and Rubin's causal framework without presuppositions to show that structural counterfactual outcomes and potential outcomes do not necessarily coincide at any levels of counterfactual reasoning. To furnish a thorough comparison at the most relevant level, we expressed the laws of potential outcomes in terms of the latent SCM under classical causal-inference assumptions. On the basis of this result, we gave a detailed interpretation of counterfactuals in each causal framework, specifying when they entailed different conclusions. More specifically, counterfactual inference with potential outcomes under conditional ignorability yields \emph{ceteris paribus} counterfactuals with respect to the covariates, whereas counterfactual inference with a do-intervention on an SCM yields \emph{mutatis mutandis} counterfactuals with respect to the covariates. If the cause of interest affects the covariates, these constructions are generally not equal in law. For these reasons, we call the community to not interchangeably use the potential-outcome framework and do-interventions to define counterfactual outcomes, unless the justification is explicitly made.

We emphasize that our contribution is not an argument in favor of using one causal model rather than the other, or against the formal equivalence between frameworks. Instead of taking position or addressing philosophical arguments, it highlights some facts: theoretically, one can perfectly define potential outcomes as distinct to the true structural counterfactuals; empirically, researchers have actually (implicitly) worked with potential outcomes defined as distinct to the true structural counterfactuals. Our work is meant to shed light on the different mathematical choices that analysts can make when working with counterfactual outcomes, and to precise their implications in order to prevent incorrect or ambiguous conclusions in causal studies. In doing this paper, we hope to clarify the similarities and differences between the two major causal approaches.

\section*{Acknowledgment}
The author thanks Antoine Chambaz and \'Emilie Devijver for helpful discussions and valuable suggestions, as well as two anonymous reviewers for their useful recommendations. Most of this work has been done when the author was at Institut de Mathématiques de Toulouse (Université Paul Sabatier).

\section*{Author's statement}
This work was partially funded by the project CAUSALI-T-AI (ANR-23-PEIA-0007) of the French National Research Agency. The author confirms the sole responsibility for the presented results and manuscript preparation. The author states no conflict of interest.

\appendix

\section{Illustration with pre and posttreatment covariates}\label{sec:middle_case}

Section~\ref{sec:illustration} compared causal effects obtained with respectively potential outcomes under the causal-inference regime and structural counterfactuals in a problem corresponding to Figure~\ref{fig:exogenous_treatment}. This section makes the same comparison in the context of Figure~\ref{fig:nonexogenous_covariates}. It is meant to propose a real-world illustration of Figure~\ref{fig:nonexogenous_covariates}, and to prove that nonequivalence does not generally hold in this case as well.

The following example (more precisely its graph) is inspired by \cite{li2022deep}. The treatment status $T : \Omega \to \{0,1\}$ indicates whether an individual is a smoker or not; a first covariate $X_1 : \Omega \to \R$ represents the expression of a gene, which is higher when the gene is more active; a second covariate $X_2 : \Omega \to \R$ quantifies blood pressure; the outcome $Y : \Omega \to \R$ evaluates a person's health, a higher score meaning a better health. Suppose that these variables are observed in the context of a medical study. Causal analysts are task with assessing the effect of smoking onto health. Assume that the observational vector $\O := (T,X,Y)$ is ruled by the following SCM $\M \in \mathfrak{M}_\O$:
\begin{align*}
    &T \aseq \mathbf{1}_{\{X_1 + U_T>0\}},\\
    &X_1 \aseq U_1,\\
    &X_2 \aseq \alpha T + U_2,\\
    &Y \aseq \gamma X_1 - X_2 + \beta T + U_Y,
\end{align*}
where $\alpha, \beta, \gamma$ are again deterministic parameters. Typically, $\alpha > 0$ encodes that smoking increases blood pressure, $\beta < 0$ represents the direct negative impact of smoking onto health, and $\gamma < 0$ describes a gene that deteriorates health (and makes people more inclined to smoke). Using the notations of the paper: $X := (X_1,X_2)$ and $U_X := (U_1,U_2)$. As in the fairness illustration, the model satisfies Assumption~\ref{hyp:outcome} and the negation of Assumption~\ref{hyp:control} by design. Moreover, as before we suppose that positivity (Assumption~\ref{hyp:positivity}) is true and that $U_Y \independent (U_T,U_X)$ so that Assumption~\ref{hyp:noises} holds. Note that the SCM fits Figure~\ref{fig:nonexogenous_covariates}; the exact graph is more precisely depicted in Figure~\ref{fig:middle_case}. Finally, we set two potential outcomes $(Y_0,Y_1)$ meeting the consistency rule, that is $Y \aseq (1-T) \cdot Y_0 + T \cdot Y_1$, and single-outcome conditional ignorability (Assumption~\ref{hyp:single_ignorability}). This defines $\mathcal{R} := (T,X,Y_0,Y_1) \in \mathfrak{R}_\O$. All the assumptions of Theorem~\ref{thm:identification} are satisfied.

Let us compute the estimands that an analyst working with $\mathcal{R}$ and an analyst working with $\mathcal{R}_{\M}$ respectively obtain for the average causal effect of $T$ onto $Y$ conditional to $X=x$:
\begin{align*}
    \operatorname{CATE}_{\mathcal{R}}(x) &:= \E[Y_1-Y_0 \mid X=x]\\
    &= \E[(\gamma x_1 - x_2 + \beta + U_Y) - (\gamma x_1 - x_2 + U_Y) \mid X=x]\\
    &= \beta,
\end{align*}
and
\begin{align*}
    \operatorname{CATE}_{\mathcal{R}_{\M}}(x) &:= \E[Y_{T=1}-Y_{T=0} \mid X=x]\\
    &= \E[(\gamma(X_{T=1})_1 - (X_{T=1})_2 + \beta + U_Y) - (\gamma(X_{T=0})_1 - (X_{T=0})_2 + U_Y) \mid X=x]\\
    &= \E[\gamma(X_{T=1} - X_{T=0})_1 - (X_{T=1} - X_{T=0})_2 \mid X=x] + \beta\\
    &= \E[0 - (X_{T=1} - X_{T=0})_2 \mid X=x] + \beta\\
    &= -\E[(\alpha + U_2) - U_2 \mid X=x] + \beta\\
    &= -\alpha + \beta.
\end{align*}
In this configuration as well $\operatorname{CATE}_{\mathcal{R}}$ recovers the direct effect of $T$ in $\M$, while $\operatorname{CATE}_{\mathcal{R}_{\M}}$ recovers the total effect. This supports that $\operatorname{CATE}_{\mathcal{R}}$ is a direct effect no matter the graph from Figure~\ref{fig:decisive}: it comes from the counterfactual interpretation of fixing everything but the the treatment (Section~\ref{sec:concept}). Note that compared to the fairness illustration, there is a minus before $\alpha$ in the total effect due to $X_2$ having a negative impact on $Y$. Additionally, both effects ignore the noncausal path from $T$ to $Y$ via $X_1$ quantified by $\gamma$. Most importantly, $\operatorname{CATE}_{\mathcal{R}} \neq \operatorname{CATE}_{\mathcal{R}_{\M}}$ if $\alpha \neq 0$. As expected from the discussion in Section~\ref{sec:nonequi}, an RCM satisfying the fundamental assumptions of causal inference is generally not single-world equivalent to the latent SCM when both pre and posttreatment covariates coexist.

\begin{figure}[t]
    \centering
    \begin{subfigure}[b]{0.45\textwidth}
        \centering
        \begin{tikzpicture}[-latex ,
        state/.style ={rectangle ,top color =white, text width=1.3cm, align=center,
        draw}]
        \node[state] (Xo){Blood pressure};
        \node[state] (Xi) [above =of Xo] {Gene};
        \node[state] (Y) [below right =of Xo] {Health};
        \node[state] (S) [below left =of Xo] {Smoking};
        \path (S) edge (Xo);
        \path (Xi) edge (S);
        \path (S) edge (Y);
        \path (Xi) edge (Y);
        \path (Xo) edge (Y);
        \end{tikzpicture}
        \caption{Real-world variables}
     \end{subfigure}
     \hfill
     \begin{subfigure}[b]{0.45\textwidth}
        \centering
        \begin{tikzpicture}[-latex ,
        state/.style ={circle ,top color =white,
        draw}]
        \node[state] (Xo){$X_2$};
        \node[state] (Xi) [above =of Xo] {$X_1$};
        \node[state] (Y) [below right =of Xo] {$Y$};
        \node[state] (S) [below left =of Xo] {$T$};
        \draw[->] (S) -- (Xo) node[midway,below,xshift=3pt]{$\alpha$};
        \draw[->] (Xi) -- (S) node[midway,above,xshift=-5pt]{$+1$};
        \draw[->] (S) -- (Y) node[midway,above]{$\beta$};
        \draw[->] (Xi) -- (Y) node[midway,above,xshift=3pt]{$\gamma$};
        \draw[->] (Xo) -- (Y) node[midway,below,xshift=-3pt]{-1};
        \end{tikzpicture}
        \caption{Mathematical notations}
        \label{fig:variables}
     \end{subfigure}
    \caption{Causal graph of $\M$ in Appendix~\ref{sec:middle_case}. Linear coefficients are indicated on Figure~\ref{fig:variables}. Exogenous variables are not represented but Assumption~\ref{hyp:noises} holds.}
    \label{fig:middle_case}
\end{figure}

\section{Remaining proofs}\label{sec:proofs}

\begin{proof}\textbf{of Lemma \ref{lem:docalculus}}\mbox{}
Since $\mathcal{M}$ is acyclical, there exists a topological ordering on the indices in $\I$ with respect to the parent-child relation, and therefore on the subset ${I^c}$. This means in particular that there exist some indices $j \in {I^c}$ such that $g_j$ takes only variables in $V_I$ as endogenous inputs. Starting from these indices, and recursively substituting along the topological ordering produces a measurable $f_{I^c}$ such that
\[
V_{I^c} \aseq f_{I^c}(V_{\operatorname{Endo}({I^c}) \setminus {I^c}}, U_{\operatorname{Exo}({I^c})}).
\]
Note that $\operatorname{Endo}({I^c}) \setminus I^c = \operatorname{Endo}({I^c}) \cap I \subseteq I$. Carrying out the same substitution on the intervened model $\mathcal{M}_{V_I = v_I}$ with solution $\tilde{V}$ gives
\[
\tilde{V}_{I^c} \aseq f_{I^c}(v_{\operatorname{Endo}({I^c}) \setminus {I^c}}, U_{\operatorname{Exo}({I^c})}),
\]
while by definition $\tilde{V}_I \aseq v_I$.
\end{proof}

\begin{proof}\textbf{of Lemma \ref{lm:sw}}\mbox{ }Let $t \in \T$. Positivity ensures that $\P( \cdot \mid X=x, T=t')$ is well-defined for $\mathcal{L}(X)$-almost every $x \in \R^d$ and any $t' \in \T$. Thereby, conditional ignorability entails that $\mathcal{L}(Y_t \mid X=x, T=t) = \mathcal{L}(Y_t \mid X=x, T=t')$. Moreover, consistency implies that $\mathcal{L}(Y \mid X=x, T=t) = \mathcal{L}(Y_t \mid X=x, T=t)$. All in all, for any $t' \in \T$:
\begin{equation}\label{eq:identification}
    \mathcal{L}(Y_t \mid X=x, T=t') = \mathcal{L}(Y \mid X=x, T=t).
\end{equation}
Next, we use the above expression involving \emph{conditional} distributions to obtain an expression for the \emph{joint} distribution of $(T,X,Y_t)$. Let $F \subseteq \T \times \R^d \times \R^p$ be a Borel set. We have,
\[
    \P((T,X,Y_t) \in F) = \int \P((t',x,Y_t) \in F \mid X=x,T=t') \mathrm{d}\P(X=x,T=t').
\]
Then, we define the sets $F(t',x) := \{y \in \R^p \mid (t',x,y) \in F\}$ for every $(t',x) \in \T \times \R^d$ (which are Borel sets) so that
\begin{align*}
    \P((T,X,Y_t) \in F) &= \int \P( Y_t \in F(t',x) \mid X=x,T=t') \mathrm{d}\P(X=x,T=t')\\ &= \int \P( Y \in F(t',x) \mid X=x,T=t)\mathrm{d}\P(X=x,T=t'),
\end{align*}
where the last equality follows from \eqref{eq:identification}. This concludes the proof.
\end{proof}

\begin{proof}\textbf{of Lemma \ref{lem:consistency_do}}\mbox{ }
Let $t \in \T$. By assumption, the random vector $\O := (T,X,Y)$ is the solution to an acylical SCM. We write $U_X$ and $U_Y$ the exogenous parents of respectively $X$ and $Y$. Therefore, by partitioning $\O$ into $T$ and $(X,Y)$, Lemma~\ref{lem:docalculus} guarantees the existence of a measurable function $f_{X,Y}$ such that
\begin{align*}
    (X,Y) &\aseq f_{X,Y}(T,U_X,U_Y),\\
    (X_{T=t},Y_{T=t}) &\aseq f_{X,Y}(t,U_X,U_Y).
\end{align*}
Therefore, selecting the coordinates corresponding to $Y$ furnishes a measurable function $\tilde{f}_Y$ such that
\begin{align*}
    Y &\aseq \tilde{f}_Y(T,U_X,U_Y),\\
    Y_{T=t} &\aseq \tilde{f}_Y(t,U_X,U_Y).
\end{align*}
These expressions hold on a measurable set $\Omega^* \subseteq \Omega$ such that $\P(\Omega^*) = 1$. To conclude, simply observe that for any $\omega \in \Omega^*$ such that $T(\omega)=t$, we have
\[
Y(\omega) = \tilde{f}_Y(t,U_X(\omega),U_Y(\omega)) = Y_{T=t}(\omega).
\]
\end{proof}

\begin{proof}\textbf{of Lemma \ref{lm:noise}}\mbox{ }
By assumption, the random vector $\O := (T,X,Y)$ is the solution to an acylical SCM where $U_T$, $U_X$ and $U_Y$ denote the exogenous parents of respectively $T$, $X$ and $Y$. Since additionally $Y_{\operatorname{Endo}(T)} = Y_{\operatorname{Endo}(X_i)} = \emptyset$ for every $i \in \{1,\ldots,d\}$ (Assumption~\ref{hyp:outcome}), Lemma~\ref{lem:docalculus} ensures the existence of a measurable function $f_{T,X}$ such that $(T,X) \aseq f_{T,X}(U_T,U_X).$ Therefore, if $U_Y \independent (U_T,U_X)$ (Assumption~\ref{hyp:noises}), then $U_Y \independent (T,X)$.
\end{proof}

\begin{proof}\textbf{of Proposition \ref{prop:existence}}\mbox{ }
Throughout the proof, we write $X_i$, $Y_i$, and $Y_{t,i}$ for the $i$th component of respectively $X$, $Y$, and $Y_t$ for any $t \in \T$.

We start by finding a generative model for the RCM $\mathcal{R} := (T,X,(Y_t)_{t \in \T})$ using a standard result from probability theory: any Borel probability measure on a Polish space can be obtained by \emph{push-forward} of the uniform measure on $[0,1]$ denoted by $\operatorname{Unif}$. This follows from the existence of a measurable bijection with measurable inverse between any Polish space and $\R$ \citep[Theorem 15.6]{kechris2012classical}, and the fact that any Borel probability measure on $\R$ can always be obtained by push-forward of $\operatorname{Unif}$ by its generalized-inverse probability distribution function. As a consequence, there always exists a measurable function $\psi : [0,1] \to \T \times \R^d \times (\R^p)^{N+1}$ such that $\law{(T,X,(Y_t)_{t \in \T})} = \operatorname{Unif} \circ \psi^{-1}$. Thereby, there exists a random variable $U$ such that $\law{U} = \operatorname{Unif}$ and $(T,X,(Y_t)_{t \in \T}) \aseq \psi(U)$. We divide $\psi$ into $\psi = (\psi_T,(\psi_{X_i})^d_{i=1},(\psi_{Y_{0,i}})^p_{i=1},\ldots,(\psi_{Y_{N,i}})^p_{i=1})$ and define $\psi_X := (\psi_{X_i})^d_{i=1}$ and $\psi_{Y_t} := (\psi_{Y_{t,i}})^p_{i=1}$ for any $t \in \T$. This enables us to write $T \aseq \psi_T(U)$, $X \aseq \psi_X(U)$, and $Y_t \aseq \psi_{Y_t}(U)$ for any $t \in \T$.

Then, on the basis of $\psi$ and $U$, we construct an SCM $\M := \langle U, g \rangle$ that satisfies two properties: (1) $\M$ is acyclical with solution $(T,X,Y)$, (2) $Y_{T=t} \aseq Y_t$ for any $t \in \T$. We define $g := (g_T,(g_{X_i})^d_{i=1},(g_{Y_i})^p_{i=1})$ where $g_T := \psi_T$, $g_{X_i} = \psi_{X_i}$, and $g_{Y_i}$ is given by $(t,u) \mapsto \psi_{Y_{t,i}}(u)$. The associated structural equations correctly define $T$ and $X$ by definition of $\psi_T$ and $\psi_X$. They also correctly define $Y$ since the fact that $(Y_t)_{t \in \T}$ satisfies the consistency rule means that $Y \aseq \sum_{t \in \T} \mathbf{1}_{\{T=t\}} \psi_{Y_t}(U) = \sum_{t \in \T} \mathbf{1}_{\{T=t\}} g_Y(t,U) = g_Y(T,U)$ where $g_Y := (g_{Y_i})^p_{i=1}$. Additionally, $g$ clearly does not represent cyclic relationships between variables.
Therefore, $\M := \langle U, g \rangle$ belongs to $\mathfrak{M}_\O$. Moreover, do-interventions in this model produces for any $t \in \T$, $Y_{T=t} \aseq g_{Y}(t,U) = \psi_{Y_t}(U) \aseq Y_t$. Therefore, $\mathcal{R}_\M$ and $\mathcal{R}$ are almost-surely equivalent. This concludes the proof.
\end{proof}

\bibliographystyle{abbrvnat}
\bibliography{references}

\newcommand{\noop}[1]{}
\begin{thebibliography}{52}
\providecommand{\natexlab}[1]{#1}
\providecommand{\url}[1]{\texttt{#1}}
\expandafter\ifx\csname urlstyle\endcsname\relax
  \providecommand{\doi}[1]{doi: #1}\else
  \providecommand{\doi}{doi: \begingroup \urlstyle{rm}\Url}\fi

\bibitem[Barocas et~al.(2023)Barocas, Hardt, and
  Narayanan]{barocas-hardt-narayanan}
S.~Barocas, M.~Hardt, and A.~Narayanan.
\newblock \emph{Fairness and Machine Learning: Limitations and Opportunities}.
\newblock MIT Press, 2023.

\bibitem[Beckers(2021)]{beckers2021equivalent}
S.~Beckers.
\newblock Equivalent causal models.
\newblock In \emph{Proceedings of the AAAI Conference on Artificial
  Intelligence}, volume~35, pages 6202--6209, 2021.

\bibitem[Berk(2004)]{berk2004regression}
R.~A. Berk.
\newblock \emph{Regression analysis: A constructive critique}, volume~11.
\newblock Sage, 2004.

\bibitem[Bertrand and Mullainathan(2004)]{bertrand2004emily}
M.~Bertrand and S.~Mullainathan.
\newblock Are emily and greg more employable than lakisha and jamal? a field
  experiment on labor market discrimination.
\newblock \emph{American economic review}, 94\penalty0 (4):\penalty0 991--1013,
  2004.

\bibitem[Bickel et~al.(1975)Bickel, Hammel, and O'Connell]{bickel1975sex}
P.~J. Bickel, E.~A. Hammel, and J.~W. O'Connell.
\newblock Sex bias in graduate admissions: Data from {B}erkeley: Measuring bias
  is harder than is usually assumed, and the evidence is sometimes contrary to
  expectation.
\newblock \emph{Science}, 187\penalty0 (4175):\penalty0 398--404, 1975.

\bibitem[Blom et~al.(2020)Blom, Bongers, and Mooij]{blom2020beyond}
T.~Blom, S.~Bongers, and J.~M. Mooij.
\newblock Beyond structural causal models: Causal constraints models.
\newblock In R.~P. Adams and V.~Gogate, editors, \emph{Proceedings of The 35th
  Uncertainty in Artificial Intelligence Conference}, volume 115 of
  \emph{Proceedings of Machine Learning Research}, pages 585--594. PMLR, 22--25
  Jul 2020.
\newblock URL \url{https://proceedings.mlr.press/v115/blom20a.html}.

\bibitem[Blyth(1972)]{blyth1972simpson}
C.~R. Blyth.
\newblock On {S}impson's paradox and the sure-thing principle.
\newblock \emph{Journal of the American Statistical Association}, 67\penalty0
  (338):\penalty0 364--366, 1972.

\bibitem[Bongers et~al.(2021)Bongers, Forr{\'e}, Peters, and
  Mooij]{bongers2021foundations}
S.~Bongers, P.~Forr{\'e}, J.~Peters, and J.~M. Mooij.
\newblock Foundations of structural causal models with cycles and latent
  variables.
\newblock \emph{The Annals of Statistics}, 49\penalty0 (5):\penalty0
  2885--2915, 2021.

\bibitem[Chiappa(2019)]{chiappa2019path}
S.~Chiappa.
\newblock Path-specific counterfactual fairness.
\newblock In \emph{AAAI Conference on Artificial Intelligence}, volume~33,
  pages 7801--7808, 2019.

\bibitem[Chiappa and Isaac(2019)]{chiappa2019causal}
S.~Chiappa and W.~S. Isaac.
\newblock \emph{A Causal Bayesian Networks Viewpoint on Fairness}, pages 3--20.
\newblock Springer International Publishing, Cham, 2019.
\newblock ISBN 978-3-030-16744-8.
\newblock \doi{10.1007/978-3-030-16744-8_1}.
\newblock URL \url{https://doi.org/10.1007/978-3-030-16744-8_1}.

\bibitem[Colnet et~al.(2024)Colnet, Mayer, Chen, Dieng, Li, Varoquaux, Vert,
  Josse, and Yang]{colnet2024causal}
B.~Colnet, I.~Mayer, G.~Chen, A.~Dieng, R.~Li, G.~Varoquaux, J.-P. Vert,
  J.~Josse, and S.~Yang.
\newblock Causal inference methods for combining randomized trials and
  observational studies: a review.
\newblock \emph{Statistical science}, 39\penalty0 (1):\penalty0 165--191, 2024.

\bibitem[Freedman(2004)]{freedman2004graphical}
D.~A. Freedman.
\newblock Graphical models for causation, and the identification problem.
\newblock \emph{Evaluation Review}, 28\penalty0 (4):\penalty0 267--293, 2004.

\bibitem[Gaebler et~al.(2022)Gaebler, Cai, Basse, Shroff, Goel, and
  Hill]{gaebler2022causal}
J.~Gaebler, W.~Cai, G.~Basse, R.~Shroff, S.~Goel, and J.~Hill.
\newblock A causal framework for observational studies of discrimination.
\newblock \emph{Statistics and public policy}, 9\penalty0 (1):\penalty0 26--48,
  2022.

\bibitem[Glymour and Spiegelman(2017)]{glymour2017evaluating}
M.~M. Glymour and D.~Spiegelman.
\newblock Evaluating public health interventions: 5. causal inference in public
  health research—do sex, race, and biological factors cause health outcomes?
\newblock \emph{American journal of public health}, 107\penalty0 (1):\penalty0
  81--85, 2017.

\bibitem[Greiner and Rubin(2011)]{greiner2011causal}
D.~J. Greiner and D.~B. Rubin.
\newblock Causal effects of perceived immutable characteristics.
\newblock \emph{Review of Economics and Statistics}, 93\penalty0 (3):\penalty0
  775--785, 2011.

\bibitem[Holland(1986)]{holland1986statistics}
P.~W. Holland.
\newblock Statistics and causal inference.
\newblock \emph{Journal of the American statistical Association}, 81\penalty0
  (396):\penalty0 945--960, 1986.

\bibitem[Ibeling and Icard(2024)]{ibeling2024comparing}
D.~Ibeling and T.~Icard.
\newblock Comparing causal frameworks: Potential outcomes, structural models,
  graphs, and abstractions.
\newblock \emph{Advances in Neural Information Processing Systems}, 36, 2024.

\bibitem[Imbens(2004)]{imbens2004nonparametric}
G.~W. Imbens.
\newblock Nonparametric estimation of average treatment effects under
  exogeneity: A review.
\newblock \emph{Review of Economics and statistics}, 86\penalty0 (1):\penalty0
  4--29, 2004.

\bibitem[Imbens(2020)]{imbens2020potential}
G.~W. Imbens.
\newblock Potential outcome and directed acyclic graph approaches to causality:
  Relevance for empirical practice in economics.
\newblock \emph{Journal of Economic Literature}, 58\penalty0 (4):\penalty0
  1129--79, 2020.

\bibitem[Imbens and Rubin(2015)]{imbens2015causal}
G.~W. Imbens and D.~B. Rubin.
\newblock \emph{Causal inference in statistics, social, and biomedical
  sciences}.
\newblock Cambridge University Press, 2015.

\bibitem[Kechris(2012)]{kechris2012classical}
A.~Kechris.
\newblock \emph{Classical descriptive set theory}, volume 156.
\newblock Springer Science \& Business Media, 2012.

\bibitem[Khademi and Honavar(2020)]{khademi2020algorithmic}
A.~Khademi and V.~Honavar.
\newblock Algorithmic bias in recidivism prediction: A causal perspective
  (student abstract).
\newblock In \emph{Proceedings of the AAAI Conference on Artificial
  Intelligence}, volume~34, pages 13839--13840, 2020.

\bibitem[Khademi et~al.(2019)Khademi, Lee, Foley, and
  Honavar]{khademi2019fairness}
A.~Khademi, S.~Lee, D.~Foley, and V.~Honavar.
\newblock Fairness in algorithmic decision making: An excursion through the
  lens of causality.
\newblock In \emph{The World Wide Web Conference}, pages 2907--2914, 2019.

\bibitem[Kusner et~al.(2017)Kusner, Loftus, Russell, and
  Silva]{kusner2017counterfactual}
M.~J. Kusner, J.~Loftus, C.~Russell, and R.~Silva.
\newblock Counterfactual fairness.
\newblock In \emph{Advances in Neural Information Processing Systems},
  volume~30, pages 4066--4076. Curran Associates, Inc., 2017.

\bibitem[Lewis(1973{\natexlab{a}})]{lewis1973causation}
D.~Lewis.
\newblock Causation.
\newblock \emph{Journal of Philosophy}, 70\penalty0 (17):\penalty0 556--567,
  1973{\natexlab{a}}.

\bibitem[Lewis(1973{\natexlab{b}})]{lewis1973counterfactuals}
D.~Lewis.
\newblock \emph{Counterfactuals}.
\newblock Blackwell, 1973{\natexlab{b}}.

\bibitem[Lewis(1979)]{lewis1979counterfactual}
D.~Lewis.
\newblock Counterfactual dependence and time's arrow.
\newblock \emph{No{\^u}s}, pages 455--476, 1979.

\bibitem[Li et~al.(2017)Li, Liu, Liu, Le, Ma, and Han]{li2017discrimination}
J.~Li, J.~Liu, L.~Liu, T.~D. Le, S.~Ma, and Y.~Han.
\newblock Discrimination detection by causal effect estimation.
\newblock In \emph{2017 IEEE International Conference on Big Data (Big Data)},
  pages 1087--1094. IEEE, 2017.

\bibitem[Li et~al.(2022)Li, Wang, Liu, Li, and Xu]{li2022deep}
Q.~Li, Z.~Wang, S.~Liu, G.~Li, and G.~Xu.
\newblock Deep treatment-adaptive network for causal inference.
\newblock \emph{The VLDB Journal}, 31\penalty0 (5):\penalty0 1127--1142, 2022.

\bibitem[Makhlouf et~al.(2024)Makhlouf, Zhioua, and
  Palamidessi]{makhlouf2024causality}
K.~Makhlouf, S.~Zhioua, and C.~Palamidessi.
\newblock When causality meets fairness: A survey.
\newblock \emph{Journal of Logical and Algebraic Methods in Programming}, page
  101000, 2024.

\bibitem[Markus(2021)]{markus2021causal}
K.~A. Markus.
\newblock Causal effects and counterfactual conditionals: contrasting {R}ubin,
  {L}ewis and {P}earl.
\newblock \emph{Economics \& Philosophy}, 37\penalty0 (3):\penalty0 441--461,
  2021.

\bibitem[Muandet et~al.(2021)Muandet, Kanagawa, Saengkyongam, and
  Marukatat]{muandet2021counterfactual}
K.~Muandet, M.~Kanagawa, S.~Saengkyongam, and S.~Marukatat.
\newblock Counterfactual mean embeddings.
\newblock \emph{The Journal of Machine Learning Research}, 22\penalty0
  (1):\penalty0 7322--7392, 2021.

\bibitem[Neal(2020)]{neal2020introduction}
B.~Neal.
\newblock \emph{Introduction to causal inference from a machine learning
  perspective}.
\newblock bradyneal.com, 2020.
\newblock
  \url{https://www.bradyneal.com/Introduction_to_Causal_Inference-Dec17_2020-Neal.pdf}.

\bibitem[Neyman(1923)]{neyman1923applications}
J.~Neyman.
\newblock Sur les applications de la thar des probabilities aux experiences
  agaricales: Essay des principle. excerpts reprinted (1990) in english.
\newblock \emph{Statistical Science}, 5\penalty0 (463-472):\penalty0 4, 1923.

\bibitem[Nilforoshan et~al.(2022)Nilforoshan, Gaebler, Shroff, and
  Goel]{nilforoshan2022causal}
H.~Nilforoshan, J.~D. Gaebler, R.~Shroff, and S.~Goel.
\newblock Causal conceptions of fairness and their consequences.
\newblock In \emph{International Conference on Machine Learning}, pages
  16848--16887. PMLR, 2022.

\bibitem[Park et~al.(2021)Park, Shalit, Sch{\"o}lkopf, and
  Muandet]{park2021conditional}
J.~Park, U.~Shalit, B.~Sch{\"o}lkopf, and K.~Muandet.
\newblock Conditional distributional treatment effect with kernel conditional
  mean embeddings and u-statistic regression.
\newblock In \emph{International Conference on Machine Learning}, pages
  8401--8412. PMLR, 2021.

\bibitem[Pearl(2009)]{pearl2009causality}
J.~Pearl.
\newblock \emph{Causality}.
\newblock Cambridge university press, 2009.

\bibitem[Pearl(2010)]{pearl2010brief}
J.~Pearl.
\newblock Brief report: On the consistency rule in causal inference:" axiom,
  definition, assumption, or theorem?".
\newblock \emph{Epidemiology}, pages 872--875, 2010.

\bibitem[Pearl(2018)]{pearl2018does}
J.~Pearl.
\newblock Does obesity shorten life? or is it the soda? on non-manipulable
  causes.
\newblock \emph{Journal of Causal Inference}, 6\penalty0 (2):\penalty0
  20182001, 2018.

\bibitem[Pearl(2019)]{pearl2019do}
J.~Pearl.
\newblock On the interpretation of do(x).
\newblock \emph{Journal of Causal Inference}, 7\penalty0 (1):\penalty0
  20192002, 2019.
\newblock \doi{doi:10.1515/jci-2019-2002}.
\newblock URL \url{https://doi.org/10.1515/jci-2019-2002}.

\bibitem[Pearl et~al.(2016)Pearl, Glymour, and Jewell]{pearl2016causal}
J.~Pearl, M.~Glymour, and N.~P. Jewell.
\newblock \emph{Causal Inference in Statistics: A Primer}.
\newblock John Wiley \& Sons, 2016.

\bibitem[Plecko and Meinshausen(2020)]{plecko2020fair}
D.~Plecko and N.~Meinshausen.
\newblock Fair data adaptation with quantile preservation.
\newblock \emph{Journal of Machine Learning Research}, 21:\penalty0 1--44,
  2020.

\bibitem[Qureshi et~al.(2020)Qureshi, Kamiran, Karim, Ruggieri, and
  Pedreschi]{qureshi2020causal}
B.~Qureshi, F.~Kamiran, A.~Karim, S.~Ruggieri, and D.~Pedreschi.
\newblock Causal inference for social discrimination reasoning.
\newblock \emph{Journal of Intelligent Information Systems}, 54:\penalty0
  425--437, 2020.

\bibitem[Richardson and Robins(2013)]{richardson2013single}
T.~S. Richardson and J.~M. Robins.
\newblock Single world intervention graphs {(SWIGs):} a unification of the
  counterfactual and graphical approaches to causality.
\newblock \emph{Center for the Statistics and the Social Sciences},
  128\penalty0 (30):\penalty0 2013, 2013.
\newblock Working Paper.

\bibitem[Ridgeway(2006)]{ridgeway2006assessing}
G.~Ridgeway.
\newblock Assessing the effect of race bias in post-traffic stop outcomes using
  propensity scores.
\newblock \emph{Journal of quantitative criminology}, 22:\penalty0 1--29, 2006.

\bibitem[Rosenbaum and Rubin(1983)]{rosenbaum1983central}
P.~R. Rosenbaum and D.~B. Rubin.
\newblock The central role of the propensity score in observational studies for
  causal effects.
\newblock \emph{Biometrika}, 70\penalty0 (1):\penalty0 41--55, 1983.

\bibitem[Rubin(2010)]{rubin2010causal}
D.~Rubin.
\newblock Causal inference.
\newblock In P.~Peterson, E.~Baker, and B.~McGaw, editors, \emph{International
  Encyclopedia of Education}, pages 66--71. Elsevier, Oxford, third edition
  edition, 2010.
\newblock ISBN 978-0-08-044894-7.
\newblock \doi{https://doi.org/10.1016/B978-0-08-044894-7.01313-0}.
\newblock URL
  \url{https://www.sciencedirect.com/science/article/pii/B9780080448947013130}.

\bibitem[Rubin(1974)]{rubin1974estimating}
D.~B. Rubin.
\newblock Estimating causal effects of treatments in randomized and
  nonrandomized studies.
\newblock \emph{Journal of educational Psychology}, 66\penalty0 (5):\penalty0
  688, 1974.

\bibitem[VanderWeele and Robinson(2014)]{vanderweele2014causal}
T.~J. VanderWeele and W.~R. Robinson.
\newblock On the causal interpretation of race in regressions adjusting for
  confounding and mediating variables.
\newblock \emph{Epidemiology}, 25\penalty0 (4):\penalty0 473--484, 2014.

\bibitem[VanderWeele et~al.(2014)VanderWeele, Vansteelandt, and
  Robins]{vanderweele2014effect}
T.~J. VanderWeele, S.~Vansteelandt, and J.~M. Robins.
\newblock Effect decomposition in the presence of an exposure-induced
  mediator-outcome confounder.
\newblock \emph{Epidemiology}, 25\penalty0 (2):\penalty0 300--306, 2014.

\bibitem[Winship and Morgan(1999)]{winship1999estimation}
C.~Winship and S.~L. Morgan.
\newblock The estimation of causal effects from observational data.
\newblock \emph{Annual review of sociology}, 25\penalty0 (1):\penalty0
  659--706, 1999.

\bibitem[Yao et~al.(2021)Yao, Chu, Li, Li, Gao, and Zhang]{yao2021survey}
L.~Yao, Z.~Chu, S.~Li, Y.~Li, J.~Gao, and A.~Zhang.
\newblock A survey on causal inference.
\newblock \emph{ACM Transactions on Knowledge Discovery from Data (TKDD)},
  15\penalty0 (5):\penalty0 1--46, 2021.

\end{thebibliography}

\end{document}